\documentclass[11pt,twoside]{article}
\usepackage{latexsym}
\usepackage{amssymb,amsbsy,amsmath,amsfonts,amssymb,amscd}
\usepackage{graphicx,color}
\usepackage{float,url}
\usepackage{cancel}
\usepackage[colorlinks=true, citecolor =violet, linkcolor = violet, urlcolor  = violet]{hyperref}
\usepackage{xcolor}
\usepackage{tikz}
\usepackage{bbm}
\setlength{\oddsidemargin}{0mm}
\setlength{\evensidemargin}{0mm}
\setlength{\topmargin}{-20mm}
\setlength{\textheight}{22cm}
\setlength{\textwidth}{17cm}
\parindent10pt

\usepackage{amsthm}

% Raccourcis utiles + théorèmes
\newtheorem{theorem}{Theorem}[section]
\newtheorem{corollary}[theorem]{Corollary}
\newtheorem{lemma}[theorem]{Lemma}

\newtheorem{prop}[theorem]{Proposition}

\newtheorem{definition}[theorem]{Definition}
\newtheorem{remark}[theorem]{Remark}

\newcommand{\Cc}{\mathcal{C}}
\newcommand{\Ee}{\mathcal{E}}

\newcommand{\N}{\mathbb{N}}

\newcommand{\R}{\mathbb{R}}

\newcommand{\eps}{\varepsilon}
\newcommand{\fa}{\forall}

\newcommand{\reg}{\omega_\varepsilon}

\newcommand{\ureg}{u_\varepsilon}

\newcommand{\signe}{\text{sgn}}
\newcommand{\testfunc}{\mathcal{C}_c^\infty}

\newcommand{\commentout}[1]{}

\definecolor{bluegreen}{rgb}{0.0, 0.6, 0.6}
\title{A Fokker-Planck equation with superlinear drift at infinity \\ for integrate-and-fire model}

\date{\today}

\author{Beno\^\i t Perthame%
\thanks{Sorbonne Universit\'e, CNRS, Universit\'e de Paris Cit\'e, Inria,
Laboratoire Jacques-Louis Lions (LJLL), F-75005 Paris, France.
\newline\texttt{benoit.perthame@sorbonne-universite.fr}}
\and
Cl\'ement Rieutord%
\:\thanks{Corresponding author, Sorbonne Universit\'e, CNRS, Universit\'e de Paris Cit\'e,
Laboratoire Jacques-Louis Lions (LJLL), F-75005 Paris, France.
\newline\texttt{clement.rieutord@sorbonne-universite.fr}}
\and
Delphine Salort%
\thanks{Sorbonne Universit\'e, CNRS, Universit\'e de Paris Cit\'e,
Laboratoire Jacques-Louis Lions (LJLL), F-75005 Paris, France.
\newline\texttt{dsalort@gmail.com}}
}

\begin{document}

	\maketitle

	\begin{abstract}
	    The Integrate-and-Fire model is a Fokker-Planck equation arising in neuroscience. It describes the evolution of the probability density of the neuronal membrane potential and fitting has shown that the inclusion of a {\em superlinear drift} provides the most realistic description. To make sense of this, we propose to set the equation on the full line, the neural activity being described by the flux at infinity. This framework serves as a model extension of the classical Noisy Integrate-and-Fire model, with a fixed firing potential.
We first establish the well-posedness of the solution, establish the boundary condition at infinity which is the major difficulty. Then,  state rigorously the  entropy dissipation property. Finally, using Doeblin’s method, we prove the exponential convergence of the solution toward the unique stationary state in full generality.
	\end{abstract}
	
\noindent{\makebox[1in]\hrulefill}\newline

\textit{Keywords:}Integrate-and-Fire, Fokker-Planck equation, Entropy inequality, Doeblin-Harris Method, Mathematical neuroscience\\
\textit{Mathematics Subject Classification.}  35B40, 	35D30 , 35Q84, 35Q92

 	\tableofcontents
	
	\section{Introduction}
 
 In recent literature, many works have focused on the {\em integrate-and-fire} (I\&F) models for neurons potentials, which now constitute a widely studied reference framework (see the survey \cite{CR2025}). However, several approaches from biophysics and computational neuroscience suggest a specificity, namely that intrinsic  dynamics with super-linear growth for large potentials, notably quadratic or exponential, would provide a more realistic description of certain neuronal behaviors. This motivation is notably inspired by the work of \cite{BL_2003, BretteG2005} on the Quadratic  or Exponential Integrate-and-Fire  model, which showed that introducing a super-linearity in the drift better captures neuronal firing dynamics and certain oscillations, compared to classical linear models.  We propose a partial differential equation (PDE) that incorporates this super-linear component in order to develop a theory and a qualitative analysis of neuronal dynamics within this framework. To date, this theoretical framework has never been subject to a rigorous mathematical study via a Fokker-Planck type PDE. The associated linear model we propose is formally as follows:
	\begin{equation}\label{eq:FPSL lin}
		\left\{\begin{aligned}
			& \frac{\partial u}{\partial t}(t,x) +\frac{\partial}{\partial x}\big((h(x)u(t,x)\big) - \frac{\partial ^2u}{\partial x^2}(t,x) = \delta_{0}(x)N(t), \quad x\in \mathbb{R}, t\geq 0,
			\\
			& N(t) = \lim_{x\rightarrow \infty}h(x)u(t,x),
			\\
			& u(t=0,x) = u_0(x)\in L^1(\R).
		\end{aligned}\right.
	\end{equation}
The specificity is to work on the full line with a flux of firing neurons $h(x)u(x)$ at infinity which occurs because we assume that $\int_{\cdot}^{\infty} \frac{1}{h(x)}dx <+\infty$. This specificity  also makes the mathematical interest and new difficulty for the analysis of the model. 

\subsection{Motivations}
The study of this equation fundamentally differs, in its structure, from the classical linear-drift I\&F model introduced in \cite{BrHa}, 
	\begin{equation*}
		\left\{\begin{aligned}
			& \frac{\partial p}{\partial t}(t,x) +\frac{\partial}{\partial x}(h(x) p(t,x)) - \frac{\partial ^2p}{\partial x}(t,x) = \delta_{V_R}(x)N(t), \qquad t\geq 0, \; x\leq V_F,\\
			& p(t,V_F) = 0\\
			& N(t) = -\frac{\partial p}{\partial x}(t,V_F).
		\end{aligned}\right.
	\end{equation*}
This equation describes the probability density of neurons, according to their membrane potential~$x$. 	When a neuron reaches the potential of firing, $V_F$, then it is instantaneously re-injected with a membrane potential of reset, $V_R< V_F$ 
(we have chosen $V_R=0$ in~\eqref{eq:FPSL lin} for simplicity). 

In such a model, the mathematical theory does not take into account that the drift is quadratic or exponential. The super-linearity of the drift is a concept for $x\to \infty$. This leads us to consider the equation on the full line and compute the flux of firing neurons $N(t)$ through a source term located at infinity, unlike in the classical case where it is imposed by a Dirichlet boundary condition at $V_F$. 

Notice that the novelty is the {\em flux condition at infinity}, which justifies the superlinear growth of $h(x)$ and impose to extend the I\&F equation to the full line with $V_F=\infty$. This extension to the full line has also been used,  mainly to understand when the drift depends on the neural activity $N(t)$, the complex possible dynamics including periodic solutions, \cite{CaPe, DPSZ2024,JGLiuZZ21}.

We aim to develop a theory around this new formulation \eqref{eq:FPSL lin} with a super-linear drift, in a first step, in the linear setting, that is, in term of modeling, the neurons are independent the one with each others. 
\\

\subsection{Assumptions}

	In order to catch the superlinear behaviour of the drift, we consider a generalized drift function $h\in \Cc^1$ that satisfies the following asymptotic behavior.
	\\
	\textbf{Behavior at $-\infty$:}
	\begin{equation}\label{eq:comportement h -infty}
		\exists x_0 \leq 0,\; h_0\in \R \quad \text{such that} \quad \fa x\leq x_0,\quad h(x) = -x+h_0.
	\end{equation}
	This assumption could be generalized, this specific form allows us to perform explicit calculations which avoid some technicalities.
	\\
	\textbf{Behavior at $+\infty$:}
	\begin{equation}\label{eq:comportement h +infty}
	\begin{cases}
		\exists x_1 \geq 0,\; \fa x\geq x_1,  \quad h(x)>0, \quad h'(x)>0, 
		\\[2pt]
		\displaystyle \int_{x_1}^\infty\frac{dx}{h(x)}<\infty,
		\\[2pt]
		h(x)\sup_{y\geq x}\frac{h'(y)^2}{h(y)^4}\in L^1(x_1,\infty).
		\end{cases}
	\end{equation}
Both the quadratic and exponential models used in the literature satisfy these assumption.	Throughout this paper we use these two assumptions without necessarily mentioning them. We also use the notation, 
\begin{equation}\label{def:calV}
\mathcal{H}(x) = \int_0^x h(y)dy, \quad \text{hence} \quad  \mathcal{H}(x) \underset{-\infty}{\sim} -\frac{x^2}{2}, \quad \lim_{x\to \infty} \mathcal{H}(x) = +\infty .
\end{equation}
	The third line of assumptions \eqref{eq:comportement h +infty} ensures that 
	\begin{equation}\label{eq:limh'/h^2}
     \zeta(x):= \sup_{y\geq x}\frac{h'(y)}{h(y)^2}\in L^1(x_1,\infty) \qquad \text{and} \qquad \zeta(x) \to 0, \quad \text{as}\quad    x\rightarrow \infty.
    \end{equation}
	Indeed, we define $f(x):= h(x)\sup_{y\geq x}\frac{h'(y)^2}{h(y)^4}$ and then, for  $x\geq x_1$, $\zeta(x) = \big(\frac{f(x)}{h(x)}\big)^{1/2}$. According to assumption  \eqref{eq:comportement h +infty}, both $\frac{1}{h}$ and $f$ belong to $L^1(x_1,\infty)$ so does $\zeta$. Since it is non-increasing, the result follows.

	Such a drift $h$ has the property to send any potential that is located after $x_1$ to $+\infty$ in finite time. Indeed, ignoring the diffusion, the characteristics, $X(t)$, are determined by
	\[
%	\left\{\begin{aligned} &
		\frac{dX}{dt} = h(X(t)).
%		\\	& X(0)>x_1.
%	\end{aligned}\right.
	\]
	When  $X(0)>x_1$, the solution $X(t)$ blows-up in finite time, given by
	\[
	\lim_{t\rightarrow t_0}X(t) = +\infty, \qquad \text{with}\qquad t_0= \int_{X(0)}^\infty\frac{dy}{h(y)}.
	\]
	This kind of property is still true, with non-zero probability, when the membrane potential evolves according to the stochastic differential equation 
	\begin{equation}\label{eq:EDS IF}
	    dX_t = h(X_t)\,dt +\sqrt{2} dB_t .
	\end{equation}
In both cases, the  reset is obtained by setting $X_{t^+} = 0$ when $\lim_{t\to t^-} X_{t} =+\infty$.
%	Taking as a convention $\sigma_C = \sqrt{2}$, we get the associated Fokker-Planck equation 
%	\begin{equation}\label{eq:FPSL non lin}
%		\left\{\begin{aligned}
%			& \frac{\partial u}{\partial t}(t,x) +\frac{\partial}{\partial x}\big(\big(h(x)+bN(t) \big) u(t,x)\big) - \frac{\partial ^2u}{\partial x^2}(t,x) = \delta_{0}(x)N(t), \quad x\in \mathbb{R}, t\geq 0\\
%			& N(t) = \lim_{x\rightarrow  +\infty}h(x)u(t,x).\\
%		\end{aligned}\right.
%	\end{equation}
%	For the rest of the paper, we will only consider the linear case, by taking $b=0$. 
    The interest of the linear model lies in building a solid theoretical foundation for the existence and regularity of solutions, as well as their asymptotic behavior, on order to better address the nonlinear case.
    \\
    One of the main difficulty is to make sense of the flux $N(t)$ of particles reaching $+\infty$ when we only handle a weak solution, $u(t)\in L^1(\R)$ which is not defined pointwise.

\subsection{Main results}
	
	Our first result concerns well-posedness in $L^1$ with minimal assumptions. 
\begin{theorem} [Well-posedness in $L^1$] \label{th:existence weak sol}
	Assume that $h$ satisfies \eqref{eq:comportement h -infty} and \eqref{eq:comportement h +infty}. Then, for any initial data $u_0\in L^1(\R)$, there exists a unique weak solution $(u,N_u)\in L^\infty(\R^+;L^1(\R))\times L^1_{loc}(\R^+)$ of Eq.~\eqref{eq:FPSL lin}, i.e.,  that satisfies~\eqref{eq:formulation faible FPS} and~\eqref{eq:Conservation mass}.
 	 In addition, we have 
 	 	\begin{equation}\label{eq:L1Contract}
 	    \text{for a.e. }  t > 0, \quad \int_\R |u(t,x)|\,dx \leq  \int_\R |u_0(x)| \,dx,
 	\end{equation}
 	\begin{equation}\label{eq: U_T bound}
 	    \fa T\geq 0, \quad \sup_{x\in \mathbb{R}}\int_0^T|h(x)u(t,x)|\,dt <\infty \quad \text{and}\quad U_T = \int_0^Tu(t)\,dt \in W^{1,\infty}(\R).
 	\end{equation}
\end{theorem}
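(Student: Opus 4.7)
The plan is to prove Theorem~\ref{th:existence weak sol} by approximation: for each $R > x_1$, solve the classical Noisy I\&F model on the truncated domain $(-\infty, R)$ with firing threshold $V_F = R$, Dirichlet boundary $u_R(t, R) = 0$, firing rate $N_R(t) = -\partial_x u_R(t, R)$, and reset at $x = 0$. Existence, uniqueness and basic regularity of $u_R$ follow from the classical theory of \cite{BrHa} (routinely extended to $L^1$ initial data). The crux is then to obtain $R$-uniform a priori estimates strong enough to pass to the limit $R \to \infty$ and, in particular, to establish \eqref{eq: U_T bound}, which is what renders the flux condition at infinity meaningful.

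\smallskip

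\textbf{Uniform estimates.} The $L^1$-contraction \eqref{eq:L1Contract} for $u_R$ follows from a standard Kato/Stampacchia argument (test against $\mathrm{sgn}_\varepsilon(u_R)$, use $\Delta |u| \geq \mathrm{sgn}(u)\,\Delta u$, pass to $\varepsilon \to 0$), and combined with non-negativity for data of constant sign (then $u_0 = u_0^+ - u_0^-$ by linearity) it also controls $\int_0^T |N_R| \leq \|u_0\|_{L^1}$. The key estimate is obtained by integrating the equation in time: $U_{R,T}(x) := \int_0^T u_R(t,x)\,dt$ satisfies
\begin{equation*}
-U_{R,T}'' + (h U_{R,T})' = u_0 - u_R(T) + \mathcal{N}_{R,T}\,\delta_0, \qquad U_{R,T}(R) = 0,
\end{equation*}
with $\mathcal{N}_{R,T} = \int_0^T N_R\,dt$. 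Introducing the time-integrated flux $J_{R,T} := h U_{R,T} - U_{R,T}'$, the identity $J_{R,T}' = u_0 - u_R(T) + \mathcal{N}_{R,T}\,\delta_0$ together with $J_{R,T}(-\infty) = 0$ (Gaussian decay forced by the linear tail $h(x) = -x + h_0$) yields $\|J_{R,T}\|_{L^\infty} \leq 3 \|u_0\|_{L^1}$, uniformly in $R$. Solving the first-order ODE $U_{R,T}' = h U_{R,T} - J_{R,T}$ backwards from $R$ with integrating factor $e^{-\mathcal{H}(x)}$ and using the asymptotic
\begin{equation*}
e^{\mathcal{H}(x)} \int_x^\infty e^{-\mathcal{H}(y)}\,dy \;\sim\; \frac{1}{h(x)} \qquad \text{as } x \to +\infty,
\end{equation*}
(obtained by one integration by parts, exploiting $\int^\infty 1/h < \infty$ together with $\zeta(x) \to 0$ from \eqref{eq:limh'/h^2}), one deduces $\|h U_{R,T}\|_{L^\infty(\mathbb{R})} + \|U_{R,T}\|_{W^{1,\infty}(\mathbb{R})} \leq C(T, \|u_0\|_{L^1})$ uniformly in $R$, which after splitting positive and negative parts of $u_0$ gives \eqref{eq: U_T bound}.

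\smallskip

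\textbf{Passage to the limit and uniqueness.} The uniform estimates yield weak-$*$ compactness of $\{u_R\}$ in $L^\infty(0,T;L^1)$ and of $\{N_R\}$ in the space of bounded measures on $[0,T]$, improved to $L^1_{\mathrm{loc}}(\mathbb{R}^+)$ by re-inserting into the PDE. Passing to the limit $R \to \infty$ in the weak formulation against test functions $\varphi \in \mathcal{C}_c^\infty$ is straightforward, the boundary contribution at $x = R$ vanishing thanks to the decay $|U_{R,T}(x)| \lesssim 1/h(x)$. The same stationary ODE analysis applied to the limit $U_T$ on all of $\mathbb{R}$ gives $h(x) U_T(x) \to \mathcal{N}_T$ as $x \to +\infty$, identifying $N$ as the flux at infinity in the time-integrated sense required by the weak formulation. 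For uniqueness, linearity reduces to $u_0 \equiv 0$: applying the stationary analysis to $U_T = \int_0^T u$ in that case yields $\|h U_T\|_\infty \lesssim \|u(T)\|_{L^1} + |\mathcal{N}_T|$, and a Gronwall argument in $T$ (using that $|\mathcal{N}_T|$ and $\|u(T)\|_{L^1}$ are themselves controlled by $\int_0^T \|u(s)\|_{L^1}\,ds$ through the $L^1$-contraction applied to the sourced equation) forces $u \equiv 0$.

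\smallskip

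The hard step is the uniform bound $\|h U_{R,T}\|_{L^\infty} \leq C$: it is the single place where the precise conditions \eqref{eq:comportement h -infty} and \eqref{eq:comportement h +infty} enter essentially, and it is what promotes merely $L^1$ solutions to objects carrying a well-defined flux at infinity, thereby giving rigorous meaning to the boundary condition $N(t) = \lim_{x \to \infty} h(x) u(t,x)$ and unlocking both existence and uniqueness.
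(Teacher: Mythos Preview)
Your truncation (Dirichlet on $(-\infty,R)$) differs from the paper's (truncated drift plus absorption $\alpha_R\mathbf{1}_{x\geq R}$ on the full line), and your $U_{R,T}$ estimate via the time-integrated ODE is exactly the paper's Step~4. Two points, however, are genuine gaps.

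\textbf{Compactness for existence.} You assert ``weak-$*$ compactness of $\{u_R\}$ in $L^\infty(0,T;L^1)$'', but $L^1$ is not a dual space, so a uniform $L^1$ bound alone gives no such compactness; nor does your $U_{R,T}$ bound provide tightness or equi-integrability of $u_R(t,\cdot)$. The paper avoids this by first taking $u_0\in\mathcal{C}^0_c(\R)$: comparison with the truncated stationary state $\overline{u}_R$ then yields $|u_R|\leq C_0\,\overline{u}_R$, so $w_R=u_R/\overline{u}_R$ is bounded in $L^\infty(\R^+\times\R)$ and one extracts weak-$*$ limits there (and for $N_R$ in $L^\infty(\R^+)$). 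General $u_0\in L^1$ is reached afterwards by density and the $L^1$-contraction. Your scheme is missing this two-step structure.

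\textbf{Uniqueness.} The Gronwall argument does not close. With $u_0=0$ you get $\|hU_T\|_\infty\lesssim\|u(T)\|_1+|\mathcal N_T|$, but the claim that $\|u(T)\|_1$ and $|\mathcal N_T|$ are controlled by $\int_0^T\|u(s)\|_1\,ds$ ``through the $L^1$-contraction applied to the sourced equation'' is circular: for a merely weak $L^1$ solution, $L^1$-contraction itself requires Kato's inequality for $|u|$, which for an unbounded drift needs the DiPerna--Lions commutator argument; and no a priori bound $|N_u(t)|\leq C\|u(t)\|_1$ is available (mass conservation only gives $\int u(t)=0$, not $\int|u(t)|=0$). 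The paper's proof regularizes $u_\varepsilon=u*\omega_\varepsilon$, shows the commutator $R_\varepsilon\to 0$ in $L^1_{\mathrm{loc}}$, applies Kato to $|u_\varepsilon|$, and then---crucially---invokes Lemma~\ref{cor:lim Vu} to show that in the limit $R\to\infty$ the boundary contribution satisfies $|N_u(t)|-\limsup_R\int h(x)|u(t,x)|\chi(x-R)\,dx\leq 0$, which is what forces $\frac{d}{dt}\int|u|\leq 0$. This boundary-at-infinity step is the core of the uniqueness proof and is absent from your sketch.
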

The precise definition of weak solutions, including the delicate question of the condition at infinity, that is \eqref{eq:formulation faible FPS} and~\eqref{eq:Conservation mass},  is treated in Section~\ref{sec:exist} where we also prove existence relying on a truncated equation. Uniqueness uses a regularization argument and is treated in Section~\ref{sec:unique}.
\\
	
With stronger initial data, the solution can enjoy further regularity, however limited  by the Dirac mass in the right hand side of \eqref{eq:FPSL lin}.
\begin{prop}[Regularity of the solution]\label{prop:regularity of u}.
   Let $u_0\in \Cc^0_c(\R)$, the weak solution of \eqref{eq:FPSL lin} satisfies 
	   \begin{equation}\label{eq:borne du/dx}
	\forall T>0, \qquad  \int_0^T \! \int_\R\frac{\big(\partial_xu(t,x)\big)^2}{u_\infty(x)}\,dx\,dt<\infty.
	   \end{equation}
Assume  additionally that $u_0\in H^1(\R)$, then for any $T>0$, we have $u\in L^\infty((0,T);H^1_{loc}(\R))$ and 
\[
\forall A>0, \qquad \sup_{t\in (0,T)}\sup_{|x|\leq A}|u(t+\eta, x)-u(t,x)| = O(\eta^{1/4}),
\] 
            \[
             \lim_{\eta \rightarrow 0}\sup_{t\in (0,T)}\|u(t+\eta, \cdot)-u(t,\cdot)\|_{\infty}+\|u(t+\eta, \cdot)-u(t,\cdot)\|_{1}=0.
            \]
            In particular, $u$ is continuous with respect to $x$ and $t$. 
            Finally, for all $ u_0\in L^1(\R)$, $t \mapsto u(t) \in \Cc^0\big(\R^+;L^1(\R)\big)$.
    \end{prop}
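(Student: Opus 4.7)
The principal difficulty is the Dirac mass at $x=0$, which precludes any global $H^1$ smoothing effect and forces a local analysis away from the origin; at $+\infty$ the unbounded drift is controlled through the weight $u_\infty$. My approach splits into three stages: derive \eqref{eq:borne du/dx} from an entropy-type identity built on $u_\infty$, upgrade to $H^1_{loc}$ by isolating the singularity at $0$ with a cut-off, and deduce the time moduli by interpolation.

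\textbf{Step 1 (weighted dissipation).} I would first regularise the equation by replacing $\delta_0$ with $\omega_\varepsilon$, producing smooth solutions $u_\varepsilon$. Setting $v_\varepsilon := u_\varepsilon/u_\infty$ and using the stationary identities $\partial_x u_\infty = h u_\infty$ on $\{x<0\}$ and $\partial_x u_\infty = h u_\infty - N_\infty$ on $\{x>0\}$, the equation rewrites in divergence form with leading operator $\partial_x(u_\infty \partial_x v_\varepsilon)$. Multiplying by $v_\varepsilon$ and integrating gives an energy identity of the form
$$ \tfrac{1}{2}\tfrac{d}{dt}\!\!\int_\R u_\infty v_\varepsilon^2\,dx + \int_\R u_\infty (\partial_x v_\varepsilon)^2\,dx = \text{boundary-type terms at } x=0,$$
whose right-hand side is bounded by $\|u_0\|_\infty\,N_\varepsilon(t)/u_\infty(0)$, hence integrable in $t$ thanks to Theorem~\ref{th:existence weak sol}. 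Passing to the limit $\varepsilon\to 0$ yields $\int_0^T\!\!\int_\R u_\infty(\partial_x v)^2\,dx\,dt<\infty$. Unfolding $\partial_x v = \partial_x u/u_\infty - u\,\partial_x u_\infty/u_\infty^2$ and integrating by parts converts this into \eqref{eq:borne du/dx}, the correction terms being dominated by the weighted $L^2$-norm of $u$ produced by the same identity.

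\textbf{Step 2 ($H^1_{loc}$ and time modulus).} For $u_0 \in H^1(\R)$, I take a smooth cut-off $\chi \in \Cc^\infty_c(\R)$ vanishing near $0$: then $\chi u$ satisfies a parabolic equation \emph{without} Dirac source, with only lower-order commutator terms, and the standard estimate obtained by multiplying by $-\partial_{xx}(\chi u)$ gives $\chi u \in L^\infty(0,T;H^1(\R))$. Summing over a locally finite family of such cut-offs, together with the observation that $u$ is itself continuous across $x=0$ (only its derivative jumps by $-N(t)$), produces $u \in L^\infty(0,T;H^1_{loc}(\R))$. The equation then gives $\partial_t u \in L^2_t L^2_{x,loc}$ away from $0$, so that $\|u(t+\eta,\cdot)-u(t,\cdot)\|_{L^2([-A,A])} = O(\eta^{1/2})$; combined with the $H^1_{loc}$ bound through the one-dimensional Gagliardo-Nirenberg inequality $\|f\|_\infty \leq C\|f\|_{L^2}^{1/2}\|f\|_{H^1}^{1/2}$, this converts into the announced $O(\eta^{1/4})$ modulus in $L^\infty_x$.

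\textbf{Step 3 (uniform $L^1$-continuity and density).} The $L^1$-modulus combines the local $L^\infty$-modulus with a tightness estimate showing that $\int_{|x|\geq R}|u(t,x)|\,dx$ can be made small uniformly in $t\in[0,T]$: at $-\infty$ from moment control via $h(x) = -x+h_0$, at $+\infty$ from the weighted bound of Step~1. For general $u_0\in L^1(\R)$, one approximates by $u_0^n \in \Cc^0_c \cap H^1$ and transfers the continuity through the contraction \eqref{eq:L1Contract}. The main obstacle I foresee is in Step~1: the discontinuity of $\partial_x u_\infty$ at $0$ produces an extra surface contribution in the energy identity that must be balanced exactly against the regularised source $\omega_\varepsilon\,N_\varepsilon$, and it is only this cancellation that justifies the passage $\varepsilon\to 0$.
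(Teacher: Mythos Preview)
Your Step~1 is close in spirit to the paper's argument (an $L^2$-entropy estimate on $w=u/u_\infty$), though the paper regularises via the truncated problem~\eqref{eq:FPS trunc} rather than by smoothing the Dirac. Either route gives the dissipation bound on $\int u_\infty(\partial_x w)^2$, which then converts to~\eqref{eq:borne du/dx} using $\int (u_\infty')^2/u_\infty<\infty$ from Proposition~\ref{prop:u_stat}.

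The genuine gap is in Step~2. Cut-offs $\chi$ that vanish near $0$ give you $L^\infty_t H^1_x$ control only on sets bounded away from the origin, and no locally finite family of such cut-offs covers a neighbourhood of $x=0$. Your patch --- ``$u$ is continuous across $0$, only its derivative jumps'' --- is circular (continuity is part of what you are proving) and in any case does not supply an $L^\infty_t$ bound on $\|\partial_x u(t,\cdot)\|_{L^2(-\delta,\delta)}$; letting the cut-off approach $0$ makes the commutator terms (scaling like $\|\chi'\|_\infty,\|\chi''\|_\infty$) blow up. The paper's device is different and is the missing idea: subtract from $u$ the explicit solution $\theta$ of the free heat equation with source $N(t)\delta_0$ and zero initial data. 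One checks directly from the Duhamel formula that $\theta,\partial_x\theta\in L^\infty((0,T)\times\R)$ (using $|N(t)|\le C_0 N_\infty$ from the maximum principle). The remainder $q=u-\theta$ then solves a parabolic equation \emph{without} any Dirac mass, and the standard estimate obtained by multiplying $\partial_x q$'s equation by $\chi\,\partial_x q$ --- now with an ordinary cut-off $\chi$, not vanishing near $0$ --- yields $q\in L^\infty_tH^1_{\mathrm{loc}}$ and, as a by-product, $q\in L^2_tH^2_{\mathrm{loc}}$.

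This gap propagates into your time-modulus argument: your route through $\partial_t u\in L^2_tL^2_{x,\mathrm{loc}}$ needs $\partial_{xx}u\in L^2_{t,x,\mathrm{loc}}$, which you have not established near $x=0$. The paper instead avoids $\partial_t u$ altogether: it compares $u$ with its spatial mollification $u_\varepsilon=u(t)\ast\omega_\varepsilon$ (error $O(\sqrt\varepsilon)$ from the uniform $H^1_{\mathrm{loc}}$ bound), uses the weak formulation to get $|u_\varepsilon(t+\eta,x)-u_\varepsilon(t,x)|\le C\eta\,\varepsilon^{-3/2}$, and optimises over $\varepsilon$ to obtain $O(\eta^{1/4})$. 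For the $L^\infty$ and $L^1$ moduli the paper's tightness argument is simpler than yours: it uses directly $|u(t,\cdot)|\le C_0 u_\infty$, so no separate moment control at $\pm\infty$ is needed.
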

    
 This proposition, and several other regularity statements, are proved in Section~\ref{sec:regularity}. In particular it allows us to state the flux condition at infinity in a stronger form, to prove the relative entropy equality and to establish a Poincar\'e inequality for suitable forms of $h$.
 \\
 
 Although the Poincar\'e inequality implies exponential decay at infinity, the Doeblin-Harris method provides us with much more general results.
\begin{theorem} [Long-term convergence] \label{th:conv DH} 
   There are constants $M, \lambda > 0$ such that for any initial probability density $u_0$, the solution  $u(t)$  of Eq.~\eqref{eq:FPSL lin} with initial data $u_0$ satisfies
    $$
    \fa t \geq 0, \qquad \int_{\R} \big(1 + (-x)_+\big) |u(t)-u_\infty|\, dx \leq Me^{-\lambda t} \int_{\R} \big(1 + (-x)_+\big) |u_0-u_\infty | \, dx \leq +\infty, 
    $$
    where $(x)_+ = \max(x, 0)$.
\end{theorem}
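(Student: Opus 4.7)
The plan is to apply the Doeblin-Harris method to the Markov semigroup $S_t u_0 := u(t)$ associated with Eq.~\eqref{eq:FPSL lin}. By Theorem~\ref{th:existence weak sol}, $S_t$ is well-defined, positivity-preserving and (conservative) mass-preserving, the latter because the flux escaping at $+\infty$ is exactly re-injected by $\delta_0 N(t)$. With the Lyapunov weight $W(x) := 1 + (-x)_+$, Harris' theorem reduces the proof to two classical ingredients: a Foster-Lyapunov drift inequality
\[
\int W(x)\, (S_t u_0)(x)\, dx \leq e^{-ct} \int W(x)\, u_0(x)\, dx + C \int u_0(x)\, dx,
\]
valid for every $t \geq 0$ and every nonnegative $u_0$, and a Doeblin minorization on a sublevel set of $W$: there exist $T, R > 0$ and a nontrivial nonnegative measure $\nu$ such that $S_T u_0 \geq \nu$ for every probability density $u_0$ with $\int W u_0 \leq R$. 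Combining these two properties, the abstract Harris theorem yields a strict contraction of $S_t$ in the norm $\|u\|_W := \int W |u|\, dx$ with a universal exponential rate, and the existence and uniqueness of $u_\infty$ as well as the claimed convergence follow at once.

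For the drift inequality, I would use a $\Cc^2$ approximation $W_\delta$ of $W$ that coincides with $W$ outside $[-\delta, 0]$. Testing the weak formulation against $W_\delta$ and integrating by parts gives
\[
\frac{d}{dt}\int W_\delta u\, dx = \int W_\delta'(x)\, h(x)\, u(t,x)\, dx + \int W_\delta''(x)\, u(t,x)\, dx + \bigl(W_\delta(0) - 1\bigr) N(t),
\]
with the crucial cancellation $W_\delta(0) = 1$: the gain from the reset at $0$ exactly compensates the loss at $+\infty$ and the (ill-controlled) flux $N(t)$ disappears. The first integral is dissipative, since $W_\delta'(x) = -1$ for $x \leq -\delta$ and $h(x) = -x + h_0$ for $x \leq x_0$ give $W_\delta'(x) h(x) = x - h_0$, which combined with the fact that $\int W u\, dx$ is comparable to $\|u\|_1 + \int_{-\infty}^{-a}(-x) u \, dx$ yields a bound $\leq -c\int W u\, dx + C \|u_0\|_1$; the second integral is controlled by $\|W_\delta''\|_\infty \|u_0\|_1$. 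Gr\"onwall's lemma and passage to the limit $\delta \to 0$ then produce the desired inequality.

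For the Doeblin minorization, I would first use the drift estimate to show that for every $u_0$ with $\int W u_0 \leq R$, a fixed fraction of the mass of $u(T_1)$ lies in a bounded interval $[-A, A]$, with $A$ depending only on $R$ and $T_1$. By positivity of $u$ and of $N$, the Duhamel representation gives $u(t) \geq G_{t-T_1} \star u(T_1)$, where $G$ is the fundamental solution of the underlying Fokker-Planck operator. Classical parabolic theory provides pointwise lower bounds of Gaussian type for $G$ on compact sets, which transforms mass concentration in $[-A, A]$ into a pointwise bound $S_T u_0(x) \geq c_0 \mathbf{1}_{[a,b]}(x)$ for some fixed interval $[a, b]$ and constant $c_0 > 0$ independent of $u_0$, providing $\nu$. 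The main obstacle is exactly this minorization step: because the state space is the full line and the source term $\delta_0 N(t)$ is nonlocal through $N(t) = \lim_{x\to \infty} h(x) u(t,x)$, standard Harnack-type lower bounds are not directly available; however the positivity $N \geq 0$ only helps the lower bound, so the argument reduces to careful parabolic estimates on compact sets, for which the continuity and $L^\infty$-regularity of $u$ established in Proposition~\ref{prop:regularity of u} are instrumental.
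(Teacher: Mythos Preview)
Your overall strategy matches the paper's: Doeblin--Harris with the Lyapunov weight $W(x)=1+(-x)_+$, and your Foster--Lyapunov step is essentially identical to Lemma~\ref{lem:estimation u_E} (same smooth approximation of $W$, same cancellation $W(0)=1$ that removes $N(t)$, same use of $h(x)=-x+h_0$ near $-\infty$).

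The minorization step, however, diverges from the paper and contains a real gap. You argue: (i) from the Lyapunov bound, a fixed fraction of the mass of $u(T_1)$ sits in a bounded interval $[-A,A]$; (ii) drop the nonnegative source, compare with the linear Fokker--Planck evolution, and invoke Gaussian-type lower bounds on compacta. Step (i) is the problem: the weight $W(x)=1+(-x)_+$ controls only the \emph{left} tail. If $u_0$ is a probability density concentrated far to the right (so $\|u_0\|_E\approx 1$), for short times $T_1$ essentially all the mass of $u(T_1)$ lies in $(A,\infty)$, and nothing in your Lyapunov inequality prevents this. The mass does come back, but only through the reset mechanism $N(t)\delta_0$ --- precisely the term you discard in your subsolution. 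Also, the Duhamel inequality ``$u(t)\ge G_{t-T_1}\star u(T_1)$'' is not a convolution (the operator is not translation-invariant), and Aronson-type two-sided Gaussian bounds on the fundamental solution are not available for superlinear drifts on the whole line; you would at minimum need to localize to a bounded interval with Dirichlet data first.

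The paper resolves this by a different and sharper route. It first proves (Proposition~\ref{prop:Minoration source}) a uniform lower bound $\int_0^t N_u(\tau)\,d\tau\ge 1/c_\varphi$ for $t\ge t_C$, obtained by testing against a function $\varphi$ built so that $\varphi''+h\varphi'=1$; this captures exactly the cumulative mass re-injected at $0$ and is what replaces your missing right-tail control. Then (Proposition~\ref{prop:minoration S(t)u}) it passes to $W=e^{-\mathcal H/2}u$, bounds the potential on $[-1,1]$, and compares with the Dirichlet heat equation on $[-1,1]$ with source $\tilde N_u(t)\delta_0$; the lower bound on the integrated source yields the uniform pointwise minorization. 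In short, the paper extracts the Doeblin measure from the source term rather than from diffusion of concentrated mass, which is why it never needs the two-sided tail control that your argument is missing.
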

The proof is detailed in Section~\ref{sec:LongTermDH}.
\\

These results open other directions which are given in the conclusion, see  Section~\ref{sec:conclusion}.

	%-------------------------------------------
	\section{Existence for the superlinear FP-IF problem}
	\label{sec:exist}
	
Our purpose is to prove the existence part of Theorem~\ref{th:existence weak sol} relying on a truncated problem. 
We begin with  a precise definition of weak solutions and give two characterizations of the condition at infinity. Then we study the steady state of Eq~\eqref{eq:FPSL lin}. It is  used to pass to the limit in the truncated problem introduced in  Section~\ref{sec:truncated}. Relying on this material, the proof of existence is completed  in Section~\ref{sec:weaksol},

%-----------------------------------------------------------
\subsection{Weak solutions of the superlinear FP-IF problem}
\label{sec:weaksol}
	
As already mentioned, a major difficulty is to define the flux at $+\infty$. We introduce it as mass conservation and give another weak sense  in Lemma~\ref{cor:lim Vu} below. In Section~\ref{sec:regularity}, with further regularity, we give a third and stronger definition.
	\begin{definition}\label{Weak sol FPS}
		We say that $(u,N_u) \in L^\infty(\R_+; L^1(\R))\times L^1_{loc}(\R_+)$ is a weak solution of Eq. \eqref{eq:FPSL lin} %with an initial data $u_0\in L^1(\R)$
		if for every test function $\varphi \in \mathcal{C}_c^\infty(\R^+\times \R)$  we have
\\[2pt]
\textbf{1. (Weak formulation of the Fokker-Planck equation)}
			\begin{align} 
				\int_0^\infty \! \int_\R u(t,x)\Big(-\partial_t\varphi(t,x)&-h(x)\partial_x\varphi(t,x)-\partial_x^2\varphi(t,x)\Big)\,dx\,dt \notag
				\\
				& = \int_0^\infty N_u(t)\varphi(t,0)\,dt
				+\int_\R u_0(x)\varphi(0,x)\,dx. \label{eq:formulation faible FPS} 
			\end{align}
\textbf{2.(Mass conservation)}
			\begin{equation}\label{eq:Conservation mass}
			\text{for } a.e. \; t\geq0 \quad 
			\int_\R u(t,x)\,dx =  \int_\R u_0(x)\,dx .
			\end{equation}
	\end{definition}
	
We would like to clarify the boundary condition at $+\infty$ in Eq.~\eqref{eq:FPSL lin}. Written in a weak form, it is in fact equivalent to the mass conservation property~\eqref{eq:Conservation mass} as we can see from the following statement.
	\begin{lemma}\label{cor:lim Vu} 
		When $u$ satisfies \eqref{eq:formulation faible FPS}, the condition \eqref{eq:Conservation mass} is equivalent to ask that for all
		$\phi\in \mathcal{C}_c^\infty(\R_+)$, and all $\chi\in \mathcal{C}_c^\infty(\R)$ with  $\int_\R\chi(x)\,dx=1$
		\begin{equation}\label{BoundaryCond}
		 \lim_{R\rightarrow \infty}\int_0^\infty\phi(t)\int_\R h(x)u(t,x)\chi(x-R)\,dx\,dt =\int_0^\infty\phi(t) N_u(t)\,dt.
		\end{equation}
	\end{lemma}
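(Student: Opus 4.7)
The plan is to plug into \eqref{eq:formulation faible FPS} a family of test functions approaching the constant $1$, so as to relate the mass $M(t):=\int_\R u(t,x)\,dx$ to the flux $N_u$. Given $\phi \in \testfunc(\R_+)$ and $\chi\in\testfunc(\R)$ with $\int\chi=1$ and $\text{supp}\,\chi\subset[-L,L]$, I would introduce the primitive $\psi_R(x):=\int_x^\infty\chi(y-R)\,dy$, which satisfies $\psi_R\equiv 1$ on $(-\infty,R-L]$, $\psi_R\equiv 0$ on $[R+L,\infty)$, $\psi_R'(x)=-\chi(x-R)$ and $\psi_R''(x)=-\chi'(x-R)$. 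Since $\psi_R$ is only bounded, I would make it admissible as a test function by multiplying by $\theta_M(x):=\theta(x/M)$, with $\theta\in\testfunc(\R)$ equal to $1$ on $[-1,1]$ and zero outside $[-2,2]$, and then pass to the successive limits $M\to\infty$, $R\to\infty$.

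In the limit $M\to\infty$ at fixed $R$, for $M>R+L$ the supports of $\theta_M'$ and $\psi_R'$ are disjoint, $\psi_R\equiv 0$ on the right lobe $[M,2M]$ of $\text{supp}\,\theta_M'$, and $\psi_R\equiv 1$ on the left lobe $[-2M,-M]$. The only delicate contribution is $-\int_0^\infty\phi(t)\int_{-2M}^{-M}h(x)u(t,x)\theta_M'(x)\,dx\,dt$. The linear growth of $h$ at $-\infty$ provided by \eqref{eq:comportement h -infty}, combined with $|\theta_M'|\leq\|\theta'\|_\infty/M$, gives $|h\,\theta_M'|\lesssim 1$ on $[-2M,-M]$, so since $u\in L^\infty(\R_+;L^1(\R))$ the inner integral vanishes as $M\to\infty$ pointwise in $t$ with a uniform $L^1$-bound, and DCT in $t$ kills this contribution; the $\theta_M''$ correction is of order $1/M^2$ and disappears as well. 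The resulting identity (using $\psi_R(0)=1$ for $R>L$) is
\[
-\int_0^\infty\!\!\phi'(t)\!\int_\R u\,\psi_R\,dx\,dt+\int_0^\infty\!\!\phi(t)\!\int_\R h u\,\chi(\cdot-R)\,dx\,dt+\int_0^\infty\!\!\phi(t)\!\int_\R u\,\chi'(\cdot-R)\,dx\,dt = \int_0^\infty\!\!\phi N_u\,dt + \phi(0)\!\int_\R u_0\,\psi_R\,dx.
\]

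In the limit $R\to\infty$, $\psi_R\to 1$ pointwise and $|\psi_R|\leq 1$, so DCT yields $\int u(t,\cdot)\psi_R\to M(t)$ and $\int u_0\,\psi_R\to M_0:=\int u_0\,dx$. The $\chi'(\cdot-R)$ term is bounded by $\|\chi'\|_\infty\int_{R-L}^{R+L}|u(t,x)|\,dx$, which tends to $0$ pointwise in $t$ with uniform $L^1$-bound. Consequently the middle integral, which is exactly the left-hand side of \eqref{BoundaryCond}, has a limit, and
\[
-\int_0^\infty\phi'(t)M(t)\,dt + \lim_{R\to\infty}\int_0^\infty\phi(t)\int_\R h u\,\chi(\cdot-R)\,dx\,dt = \int_0^\infty\phi(t)N_u(t)\,dt + \phi(0)M_0.
\]

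From this master identity the equivalence drops out by elementary distributional calculus. If $M\equiv M_0$, then $\int\phi'(t)M_0\,dt = -\phi(0)M_0$ cancels the initial term, leaving exactly \eqref{BoundaryCond}; conversely, \eqref{BoundaryCond} implies $\int_0^\infty\phi'(t)(M(t)-M_0)\,dt=0$ for every $\phi\in\testfunc(\R_+)$, which forces $M-M_0$ to be distributionally constant on $(0,\infty)$ and then, upon testing with $\phi$ such that $\phi(0)\neq 0$, to vanish a.e. The main obstacle is the $M\to\infty$ step: it is precisely the linear growth of $h$ at $-\infty$ in \eqref{eq:comportement h -infty} that prevents the boundary correction $h\,\theta_M'$ from blowing up, and a heavier-than-linear growth at $-\infty$ would require a substantially different argument.
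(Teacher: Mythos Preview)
Your argument is correct in substance and arrives at the same master identity as the paper, but the route differs in how compact support at $-\infty$ is achieved. You use the primitive $\psi_R$ (which equals $1$ on the whole of $(-\infty,R-L]$) and then multiply by a second cutoff $\theta_M$, taking the double limit $M\to\infty$ followed by $R\to\infty$. The paper instead bakes both cutoffs into a single one-parameter family: it sets
\[
\rho_R(x)=\int_x^\infty\Big(\chi(y-R)-\tfrac1R\chi\big(\tfrac{y}{R}+2\big)\Big)\,dy,
\]
so that $\rho_R$ is already supported in $[-3R,R+1]$, equals $1$ on $(-R,R-1)$, and $-\rho_R'$ contains the desired $\chi(\cdot-R)$ plus a rescaled bump near $-2R$ that plays the role of your $\theta_M'$. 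This lets the paper pass to the limit in a single step. Both arguments exploit the same mechanism on the left tail---linear growth of $h$ at $-\infty$ combined with $1/R$ (resp.\ $1/M$) decay of the cutoff derivative, so that the drift correction is bounded and the $L^1$ tail of $u(t,\cdot)$ makes it vanish---so neither approach is essentially more general. The paper's construction is a bit more economical; yours is perhaps more transparent about where the obstruction would lie if $h$ were superlinear at $-\infty$.

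One small slip: you write $|\psi_R|\le 1$, but the lemma does not assume $\chi\ge 0$, so in general only $|\psi_R|\le\|\chi\|_{L^1}$ holds. This does not affect the dominated convergence arguments.
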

	% \begin{remark} \textcolor{blue}{Cette équivalence est toujours vraie si $N_u \in \mathcal{D}'(\R^+)$ seulement.} \end{remark}
	\begin{proof}
		 Our first goal is to define a test function that generates, in the drift term,  the function $\chi$ as used in Lemma~\ref{cor:lim Vu}. We can assume without any loss of generality that $\text{supp}(\chi)\subset [-1,1]$.
			For $R>1$, we define the function $\rho_R$ by the formula  
\begin{equation} \label{def:rhoR}	
\rho_R(x) = \int_{x}^{\infty}\big[\chi(y-R)-\frac{1}{R}\chi(\frac{y}{R}+2)\big] \,dy, \end{equation} 
and we readily check that 
\begin{equation} \label{prop:rhoR}
\text{supp}(\rho_R) \subset [-3R, R+1], \qquad  \rho_R\big|_{(-R,R-1)} = 1, \qquad \forall x \in \R, \quad  |\rho_R(x)| \leq 2 \int_\R |\chi(y)|\,dy.
\end{equation} 
		Let $(u,N_u)$ be a weak solution. For every $\phi \in \mathcal{C}_c^\infty(\R^+)$ the function $\phi(t)\rho_R(x)$ belongs to $\mathcal{C}_c^\infty(\R^+\times \R)$ and it can be used for the weak formulation \eqref{eq:formulation faible FPS}. We get
		\begin{align}
    	\int_0^\infty\bigg\{-\phi'(t)\int_\R u(t,x)\rho_R(x)\,dx 
    	- &\phi(t)\int_\R u(t,x)\big(h(x)\rho_R'(x) + \rho_R''(x)\big)\,dx\bigg\}\,dt \notag
    	\\
    	&= \underbrace{\rho_R(0)}_{=1}\int_0^\infty N_u(t)\phi(t)\,dt + \phi(0)\int_\R u_0(x)\rho_R(x)\,dx.
    	\label{eq:egality mass conservation}
    \end{align}
	Since $u(t,\cdot)$ in integrable, and by dominated convergence, as $R \to \infty$, we have
	\[
		\lim_{R\rightarrow \infty} \int_0^\infty \!\int_\R  \phi'(t) u(t,x)\rho_R(x)\,dx\, dt = \int_0^\infty\!  \int_\R \phi'(t)  u(t,x)\,dx\,dt, \quad \lim_{R\rightarrow \infty }\int_\R u_0(x) \rho_R(x)\,dx = \int_\R u_0(x)\,dx,
		\]
		\[
		\lim_{R\rightarrow \infty} \int_0^\infty \phi(t)\int_\R u(t,x) h(x) \frac{1}{R}\chi(\frac{x}{R}+2)\,dx\, dt = \lim_{R\rightarrow \infty} \int_0^\infty \phi(t)\int_{-R}^{-3R} u(t,x) \frac{|x|}{R}\chi(\frac{x}{R}+2)\,dx\, dt = 0, 
		\]
		which treats one of the terms in $h(x) \rho_R'(x)$, and for almost every $t\geq 0$,
		\begin{align*}
			\int_\R|u(t,x)\rho_R''(x)|\,dx &\leq \int_\R|u(t,x)|\big(|\chi'(x-R)|+\frac{1}{R^2}|\chi'(\frac{x}{R}+2)|\big)\,dx \to 0.
		\end{align*}
		Hence, passing to the limit as $R\rightarrow \infty$ in the above weak formulation \eqref{eq:egality mass conservation}, we get on the one hand
		\[
		\int_0^\infty\phi'(t)\int_\R u(t,x)\,dx\,dt - \phi(0)\int_\R u_0(x)\,dx = \lim_{R\rightarrow \infty}\int_0^\infty\phi(t)\int_\R h(x)u(t,x)\chi(x-R)\,dx \, dt 
		-\int_0^\infty\phi(t) N_u(t)\,dt.
		\]
		But, multiplying the mass conservation property \eqref{eq:Conservation mass} by $\phi'(t)$ and integrating, we get 
		\[
		\fa \phi \in \mathcal{C}_c^\infty(\R^+), \quad \int_0^\infty\phi'(t)\int_\R u(t,x)\,dx\,dt = -\phi(0)\int_\R u_0(x)\,dx.
		\]
		Thus we have proved the equivalence between mass conservation and the boundary condition \eqref {BoundaryCond} at~$+\infty$.
	\end{proof}
	
\subsection{The stationary state}
\label{subsection:stat state}

The stationary state plays a fundamental role in our analysis and thus we begin by studying it.

Let $(u_\infty, N_\infty)$ be a stationary state of Eq.~\eqref{eq:FPSL lin}. Because it is defined up to a multiplicative constant, we normalize it as a probability. Hence, we look for a solution of the ordinary differential equation
		\begin{equation}\label{eq:FPSL lin stat}
			\left\{\begin{aligned}
				& \big(h(x)u_\infty(x)\big)' -  u_\infty''(x) = N_\infty\delta_{0}(x), \quad x\in \mathbb{R}\\
				& N_\infty = \lim_{x\rightarrow +\infty}h(x)u_\infty(x),\qquad  \int_\R u_\infty(x)\,dx = 1.
			\end{aligned}\right.
		\end{equation}

	\begin{prop}\label{prop:u_stat}
	With assumptions \eqref{eq:comportement h -infty}, \eqref{eq:comportement h +infty} and notation \eqref{def:calV}, 
		there exists a unique solution of  \eqref{eq:FPSL lin stat}, $u_\infty\in L^1\cap L^\infty(\R)$ and $u_\infty >0$. It is given by the formula
		\begin{equation}\label{eq:uinfty_formula}
		u_\infty(x) = N_\infty e^{\mathcal{H}(x)}\int_{\max(0,x)}^\infty e^{-\mathcal{H}(y)}dy, \quad \text{with}\quad N_\infty = \bigg(\int_\R e^{\mathcal{H}(x)}\int_{\max(0,x)}^\infty e^{-\mathcal{H}(y)}dy \,dx\bigg)^{-1}.
		\end{equation}
		It is decreasing on $(x_1,\infty)$ and behaves at $\pm \infty$ as 
		\begin{equation}\label{eq:  u stat +infini}
		 u_\infty(x) =C_\infty e^{-\frac {x^2}{2}+h_0x}  \quad \text{for}\; x< x_0, \qquad    u_\infty(x) = \frac{N_\infty}{h(x)}+o\bigg(\frac{1}{h(x)}\bigg) \quad \text{as}\; x\rightarrow \infty.
		\end{equation}
        In addition, it satisfies
        \begin{equation}\label{eq:integrabilite u'}
            \int_\R\frac{(u_\infty'(x))^2}{u_\infty(x)}\,dx<\infty.
        \end{equation}
	\end{prop}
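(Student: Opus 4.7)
The strategy is to treat \eqref{eq:FPSL lin stat} as an ODE away from the origin, where the flux $J(x) := h(x) u_\infty(x) - u_\infty'(x)$ is piecewise constant with a jump of $N_\infty$ across $0$. First I would use that $u_\infty \in L^1$ together with $h(x)\to -\infty$ at $-\infty$ to force $J(-\infty)=0$, fixing $J\equiv 0$ on $\R_-$ and $J\equiv N_\infty$ on $\R_+$. On $(-\infty,0)$ the relation $u_\infty' = h u_\infty$ then yields $u_\infty(x) = u_\infty(0) e^{\mathcal{H}(x)}$. On $(0,+\infty)$ the integrating factor $e^{-\mathcal{H}}$ turns $u_\infty' - h u_\infty = -N_\infty$ into $\bigl(u_\infty e^{-\mathcal{H}}\bigr)' = -N_\infty e^{-\mathcal{H}}$; integrating from $x$ to $+\infty$ and using $u_\infty(x) e^{-\mathcal{H}(x)} \to 0$ (since $\mathcal{H}\to+\infty$ while $u_\infty$ is bounded) produces
\[
u_\infty(x) = N_\infty e^{\mathcal{H}(x)} \int_x^\infty e^{-\mathcal{H}(y)}\,dy, \qquad x\ge 0.
\]
Matching at $x=0$ by continuity of $u_\infty$ gives $u_\infty(0) = N_\infty \int_0^\infty e^{-\mathcal{H}(y)}\,dy$; the normalization $\int_\R u_\infty=1$ fixes the constant $N_\infty$ and yields formula \eqref{eq:uinfty_formula}. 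Uniqueness is built into the construction, and positivity is immediate.

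The behaviour at $-\infty$ is read off \eqref{eq:comportement h -infty}: for $x\le x_0$ one has $\mathcal{H}(x) = -x^2/2+h_0 x + C$, so $u_\infty(x) = C_\infty e^{-x^2/2+h_0 x}$ with an explicit constant. For $x\to+\infty$, the central identity is the integration by parts
\[
e^{\mathcal{H}(x)} \int_x^\infty e^{-\mathcal{H}(y)}\,dy = \frac{1}{h(x)} - e^{\mathcal{H}(x)} \int_x^\infty \frac{h'(y)}{h(y)^2}\, e^{-\mathcal{H}(y)}\,dy .
\]
Bounding $h'(y)/h(y)^2 \le \zeta(x)$ for $y \ge x$ and iterating the estimate give both $u_\infty(x) = N_\infty/h(x) + o(1/h(x))$ (via $\zeta(x)\to 0$, cf.~\eqref{eq:limh'/h^2}) and the lower bound $u_\infty(x) \ge c/h(x)$ for $x$ large. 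In particular $u_\infty \in L^1\cap L^\infty$, which closes the a posteriori verification of the assumptions used above.

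Monotonicity on $(x_1,+\infty)$ reduces, via $u_\infty' = h u_\infty - N_\infty$, to checking $h u_\infty \le N_\infty$: since $h$ is non-decreasing on $(x_1,\infty)$, one has $\mathcal{H}(y)-\mathcal{H}(x) \ge h(x)(y-x)$ for $y\ge x> x_1$, hence
\[
h(x)\, e^{\mathcal{H}(x)} \int_x^\infty e^{-\mathcal{H}(y)}\,dy \le h(x)\int_x^\infty e^{-h(x)(y-x)}\,dy = 1.
\]
For the Fisher-type integrability \eqref{eq:integrabilite u'} I would split at $0$: on $(-\infty,0)$, $(u_\infty')^2/u_\infty = h^2 u_\infty$ is integrable by the Gaussian decay; on $(0,+\infty)$, $(u_\infty')^2/u_\infty = (h u_\infty - N_\infty)^2/u_\infty$, and combining $|h u_\infty - N_\infty| \le C\,\zeta(x)$ with $u_\infty(x) \ge c/h(x)$ yields $(u_\infty')^2/u_\infty \le C\, \zeta(x)^2\, h(x) = C\, h(x) \sup_{y\ge x} h'(y)^2/h(y)^4$, which belongs to $L^1(x_1,\infty)$ by the third line of \eqref{eq:comportement h +infty}. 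The delicate step, which I expect to require the most care, is the simultaneous control of the remainder $u_\infty - N_\infty/h$ both as an $o(1/h(x))$ upper bound and through the matching lower bound $u_\infty \ge c/h$; both follow from the integration by parts, but only once the supremum appearing in $\zeta$ is handled cleanly to avoid circular estimates.
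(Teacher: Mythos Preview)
Your proposal is correct and follows essentially the same approach as the paper: derive the flux relation $h u_\infty - u_\infty' = N_\infty \mathbf{1}_{[0,\infty)}$, integrate with the factor $e^{-\mathcal{H}}$ to obtain the explicit formula, then use the integration-by-parts identity to extract the two-sided bound $\frac{N_\infty}{h(x)}\bigl(1-\zeta(x)\bigr) \le u_\infty(x) \le \frac{N_\infty}{h(x)}$, from which the asymptotics, the monotonicity, and the estimate $(u_\infty')^2/u_\infty \le C\, h(x)\zeta(x)^2$ all follow. The only cosmetic difference is that you prove $h u_\infty \le N_\infty$ via the convexity inequality $\mathcal{H}(y)-\mathcal{H}(x)\ge h(x)(y-x)$, whereas the paper reads the same upper bound directly from the sign of the remainder in the integration by parts.
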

	
	\begin{proof}
		To compute $u_\infty$, we integrate equation~\eqref{eq:FPSL lin stat} between $-\infty$ and $x$ and obtain
		\begin{equation}\label{eq:ustatprime}
		\fa x \in \R, \qquad h(x)u_\infty(x) - u_\infty'(x) = N_\infty \mathbf{1}_{[0,\infty)}(x).
		\end{equation}
		Multiplying by \(e^{-\mathcal{H}(x)}\), we arrive at
		\[
		-\frac{d}{dx}\bigg(u_\infty(x)e^{-\mathcal{H}(x)}\bigg) = N_\infty \mathbf{1}_{[0,\infty)}(x)e^{-\mathcal{H}(x)}.
		\]
		Now, integrating this equation on $(x,\infty)$ and multiplying by $e^{\mathcal{H}(x)}$, we obtain the formula \eqref{eq:uinfty_formula} for $u_\infty$ after choosing $N_\infty$ to normalize $u_\infty$.
		
		According to \eqref{eq:comportement h -infty},  there is $h_1$  such that $\mathcal{H}$ is given by 
		\begin{equation}\label{eq:comportement bigV}
			\mathcal{H}(x) =
				 -\frac{1}{2}x^2+h_0x+h_1, \quad x\leq x_0.
		\end{equation}
		Therefore, the behaviour at $-\infty$ on \eqref{eq:  u stat +infini} follows from
		\[
		    u_\infty(x) = \big(e^{h_1}N_\infty \int_{0}^\infty e^{-\mathcal{H}(y)}dy\,\big)e^{-\frac {x^2}{2}+h_0x} \qquad \text{for} \quad  x< x_0.
		\]

		In addition, for $x \geq x_1$, we get thanks to definition \eqref{def:calV} and integrating by parts
		\begin{align*}
		    \int_x^\infty e^{-\mathcal{H}(y)}dy =\int_x^\infty\frac{1}{h(y)}h(y)e^{-\mathcal{H}(y)}dy& =
		    \bigg[\frac{-1}{h(y)}e^{-\mathcal{H}(y)}\bigg]_{y=x}^\infty - \int_x^\infty\frac{h'(y)}{h(y)^2}e^{-\mathcal{H}(y)}dy, \notag
		\end{align*}
    Recalling the function $\zeta(x) = \sup_{y\geq x}\frac{h'(y)}{h(y)^2}$ in \eqref{eq:limh'/h^2}, this implies
    \[
    \frac{e^{-\mathcal{H}(x)}}{h(x)} \; \frac{1}{1 +\zeta(x) }	< \int_x^\infty e^{-\mathcal{H}(y)}dy  
     < \frac{e^{-\mathcal{H}(x)}}{h(x)}, 
    \]
    and, since $1-a \leq \frac{1}{1+a}$ for $a \geq 0$, we conclude
    \begin{align} \label{est:uinfty}
    \frac{N_\infty}{h(x)} \; \Big(1 - \zeta(x)\Big) <  u_\infty (x)  
     < \frac{N_\infty}{h(x)}.
    \end{align}

   As a consequence, we obtain the asymptotic behavior of $u_\infty$ at $+\infty$.
    \\
    
    Finally, we prove \eqref{eq:integrabilite u'}. According to the behaviour of $u_\infty$ on $(-\infty,0)$, we have  $\frac{(u_\infty'(x))^2}{u_\infty(x)} = O\big(\exp(-\frac{x^2}{4})\big)$, so that it holds $\frac{(u'_\infty)^2}{u_\infty}\in L^1(-\infty,0)$.
    
   Now, for $x\geq x_1$, Eq. \eqref{eq:ustatprime} reduces to  
    \[
     u_\infty'(x) = h(x)u_\infty(x) -N_\infty .
    \]
    Thanks to estimate \eqref{est:uinfty}, we find successively 
    \[
     -N_\infty\zeta(x)<  u_\infty'(x) < 0, \qquad 
    \big(u_\infty'(x) \big)^2 < N_\infty^2 \zeta(x)^2, 
    \]
    \begin{align*}
        \frac{\big(u_\infty'(x) \big)^2}{u_\infty(x)} &\leq \underbrace{N_\infty h(x) \zeta(x)^2}_{\in L^1(x_1,\infty)}
    \underbrace{\Big( 1 - \zeta(x)\Big)^{-1}}_{=O(1)},
%    \\ &\leq N_\infty \Big( 1 + \sup_{x\geq x_1} \frac{h'(y)}{h(y)^2}\Big)h(x) \sup_{y\geq x} \frac{h'(y)^2}{h(y)^4}.
    \end{align*}
 where we use assumption \eqref{eq:comportement h +infty}. By integrability $+\infty$, the bound \eqref{eq:integrabilite u'} follows.
	\end{proof}

	\subsection{A truncated problem}
	\label{sec:truncated}
	
	Existence of a weak solution for Eq. \eqref{eq:FPSL lin} is not standard because we need to deal with the problem of the superlinear drift $h(x)$, and in particular that mass could be sent  at $+\infty$. Therefore, we consider a truncated problem for which existence is standard, see for instance \cite{AmannBook}. For a fixed $R>0$, we set
	\[
	h_R(x) = \min(h(x), h(R))\mathbf{1}_{x\geq R}+h(x)\mathbf{1}_{x< R}.
	\]
	Then, we consider the truncated equation
	\begin{equation}\label{eq:FPS trunc}
		\left\{\begin{aligned}
			&\frac{\partial u_R}{\partial t}(t,x) +\frac{\partial}{\partial x}\big(h_R(x)u_R(t,x)\big)-\frac{\partial^2u_R}{\partial x^2}(t,x) +\phi_R(x)u_R(t,x) = N_R(t)\delta_0(x), \quad x\in \R, t\geq 0\\
			&N_R(t) = \int_\R\phi_R(x) u_R(t,x)\,dx, \quad t \geq 0.
		\end{aligned}
		\right.
	\end{equation}
	With a constant $\alpha_R>0$, we define the absorption rate
	\begin{align}
	    \phi_R(x) = \alpha_R\mathbf{1}_{x\geq R}, \quad \text{with} \quad \lim_{R\rightarrow \infty}\alpha_R=+\infty, \qquad \alpha_R =o\big(h(R)^2\big), \;  R\rightarrow +\infty. \label{as:trunc}
	\end{align} 
	
	We know from methods in  \cite{AmannBook} that there is a unique solution $u_R\in \Cc^0\big(\R^+; L^1(\R)\big)$  and $N_R\in \Cc^0_b(\R_+)$. However to derive uniform bounds in $R$, we need the relative entropy inequality and thus to study the stationary states.

    \begin{prop}[Stationary state for the truncated problem] \label{Prop:N_R}
Recall \eqref{eq:comportement h -infty}--\eqref{def:calV}. For any $R>x_1$, there exists a unique stationary state $\overline{u}_R\in L^1 \cap L^\infty(\R)$ of Eq. \eqref{eq:FPS trunc} that satisfies  
      $$
       \overline{u}_R >0, \qquad \int_\R\overline{u}_R(x)\,dx = 1.
      $$
      Furthermore, with $\lambda_R = \frac{1}{2}\big(\sqrt{h(R)^2+4\alpha_R}-h(R)\big)>0$ and $\overline{N}_R>0$ a normalizing constant,   $\overline{u}_R$ is given, with $u_\infty$ constructed in Section~\ref{subsection:stat state}, by the formula
     \begin{equation}\label{eq:ecriture u_R}
       \overline{u}_R(x) = \left\{ \begin{aligned}
         &\overline{N}_Re^{\mathcal{H}(x)} \frac{e^{-\mathcal{H}(R)}}{\lambda_R+h(R)} +\frac{\overline{N}_R}{N_\infty}u_\infty(x) \qquad &\text{for}\quad x\leq R,
       \\
            &\frac{\overline{N}_R}{\lambda_R+h(R)}e^{-\lambda_R(x-R)}\quad &\text{for}\quad x\geq R, 
            \end{aligned}
            \right.
        \end{equation}
        \begin{equation}\label{eq:conv L1 u_R stat}
          \lim_{R\rightarrow\infty} \overline N_R= N_\infty, \qquad \lim_{R\rightarrow\infty}\|\overline{u}_R-u_\infty\|_\infty = 0, \qquad \text{and}\quad \lim_{R\rightarrow\infty}\|\overline{u}_R-u_\infty\|_1 = 0   .
        \end{equation}
    \end{prop}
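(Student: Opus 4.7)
The plan is to construct $\overline{u}_R$ explicitly by decomposing the line into the three intervals $(-\infty,0)$, $(0,R)$, $(R,\infty)$ on which the stationary form of \eqref{eq:FPS trunc} is a classical linear ODE, and then matching the pieces across $x=0$ and $x=R$. Integrating the stationary equation from $-\infty$ to $x$, and using that any $L^1$ solution together with its derivative decays at $-\infty$ (the quadratic behaviour of $\mathcal{H}$ from \eqref{eq:comportement h -infty} makes this automatic), yields on $(-\infty,R]$ the first order equation
\[
h(x)\,\overline{u}_R(x) - \overline{u}_R'(x) = \overline{N}_R\,\mathbf{1}_{[0,\infty)}(x),
\]
which is exactly the equation satisfied by $u_\infty$ with $\overline{N}_R$ in place of $N_\infty$; and on $[R,\infty)$ the constant coefficient equation $\overline{u}_R''-h(R)\overline{u}_R'-\alpha_R\overline{u}_R=0$, whose unique $L^1$-decaying solution is $A\,e^{-\lambda_R(x-R)}$ with $\lambda_R$ the positive root of $\lambda^2+h(R)\lambda-\alpha_R=0$, giving the stated $\lambda_R$ and the identity $\alpha_R=\lambda_R(\lambda_R+h(R))$.

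Next I would solve on $(-\infty,R]$: the general decaying solution has the form $(\overline{N}_R/N_\infty)\,u_\infty(x)+C\,e^{\mathcal{H}(x)}$, a particular solution proportional to $u_\infty$ plus an arbitrary multiple of the homogeneous solution, both of which decay at $-\infty$. The free constant $C$ is fixed by $\mathcal{C}^1$ matching at $x=R$, using the identity $u_\infty'(R)=h(R)u_\infty(R)-N_\infty$ coming from the $u_\infty$ ODE; a short computation then delivers $A=\overline{N}_R/(h(R)+\lambda_R)$ and the coefficient of $e^{\mathcal{H}}$ stated in \eqref{eq:ecriture u_R}. The Dirac jump $\overline{u}_R'(0^+)-\overline{u}_R'(0^-)=-\overline{N}_R$ is automatically consistent with continuity of $\overline{u}_R$ at $0$ and imposes no further condition. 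Positivity is transparent on the formula, the normalization $\int_\R \overline{u}_R=1$ fixes $\overline{N}_R>0$ uniquely, and uniqueness follows since the above construction exhausts the one-parameter family of $L^1$-decaying solutions.

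For the convergences in \eqref{eq:conv L1 u_R stat}, the tail on $[R,\infty)$ has $L^1$-mass $A/\lambda_R=\overline{N}_R/\alpha_R$ which tends to $0$ by assumption \eqref{as:trunc}. The homogeneous contribution $C\,e^{\mathcal{H}}$ on $(-\infty,R]$ is the delicate part: I would combine the asymptotic $u_\infty(R)\sim N_\infty/h(R)$ from Proposition~\ref{prop:u_stat} with the estimate $\int_{x_1}^R e^{\mathcal{H}(y)}\,dy \sim e^{\mathcal{H}(R)}/h(R)$ (obtained by the same integration by parts used in the proof of Proposition~\ref{prop:u_stat} and controlled by assumption \eqref{eq:comportement h +infty}); these together show that both $\|C\,e^{\mathcal{H}}\|_{L^\infty(-\infty,R]}$ and $\int_{-\infty}^R C\,e^{\mathcal{H}(x)}\,dx$ vanish as $R\to\infty$. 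Inserting these estimates into the normalization identity forces $\overline{N}_R\to N_\infty$, and combining with the previous bounds yields $\|\overline{u}_R-u_\infty\|_{L^1}+\|\overline{u}_R-u_\infty\|_{L^\infty}\to 0$.

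The main obstacle is precisely the control of this $e^{\mathcal{H}}$-term: because $e^{\mathcal{H}(R)}$ diverges rapidly at infinity, its vanishing after multiplication by $C$ relies on a cancellation at the matching point between the terms of order $\overline{N}_R/h(R)$ coming from $A$ and from $(\overline{N}_R/N_\infty)u_\infty(R)$, and this is where the quantitative form of assumption \eqref{eq:comportement h +infty} through \eqref{eq:limh'/h^2} enters in an essential way.
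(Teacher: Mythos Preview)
Your approach is essentially identical to the paper's: solve the stationary ODE piecewise on $(-\infty,R]$ and $[R,\infty)$, match $\mathcal{C}^1$ at $x=R$ to fix the constants, and then estimate each contribution to obtain \eqref{eq:conv L1 u_R stat}.

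One remark on your final paragraph: the ``cancellation'' you anticipate is not actually needed. The matching gives directly $Ce^{\mathcal{H}(R)}=A-(\overline{N}_R/N_\infty)u_\infty(R)$, and \emph{each} term on the right is individually $O(1/h(R))$ by \eqref{est:uinfty} and the definition of $A$; hence $\|Ce^{\mathcal{H}(\cdot)}\|_{L^\infty(-\infty,R]}=|C|e^{\mathcal{H}(R)}=O(1/h(R))\to 0$ with no further cancellation. The paper argues even more directly, bounding $\|e^{\mathcal{H}(\cdot)-\mathcal{H}(R)}\|_{L^\infty(-\infty,R]}\le 1$ and using that the coefficient is $O(1/h(R))$. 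For the $L^1$ part the paper uses the crude estimate $\int_{x_1}^R e^{\mathcal{H}}\le R\,e^{\mathcal{H}(R)}$ (monotonicity of $\mathcal{H}$), so that the contribution is $O(R/h(R))\to 0$ by superlinearity; your sharper asymptotic $\int_{x_1}^R e^{\mathcal{H}}\sim e^{\mathcal{H}(R)}/h(R)$ is correct but unnecessary.
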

    \begin{remark}
        One of the difficulties is that the exponential decay of $\overline{u}_R$ does not allow for a upper control of $u_\infty$ by $\overline{u}_R$. For this reason we use often the assumption $u_0\in \testfunc(\R)$. Notice that the assumption~\eqref{as:trunc} on $\alpha_R$  gives $\lambda_R \sim \frac{\alpha_R}{h(R)}$ for $R\rightarrow \infty$ and thus the mass of $\overline{u}_R$ for $x>R$ is small.
    \end{remark}

    \begin{proof}
        We first establish \eqref{eq:ecriture u_R} with $R>x_1$. A stationary solution, $\overline{u}_R$, of Eq. \eqref{eq:FPS trunc} satisfies the ordinary differential equation
        \[
        \fa x \leq R,\qquad \Big(h(x)\overline{u}_R(x)\Big)'-\overline{u}_R''(x) = \overline{N}_R\delta_0(x).
        \]
        Integrating this on $(-\infty, x]$, we obtain successively
        \[
        h(x)\overline{u}_R(x)-\overline{u}_R'(x)=\overline{N}_R\mathbf{1}_{x\geq0}, \qquad
        -\Big(\overline{u}_R(x)e^{-\mathcal{H}(x)}\Big)'=\overline{N}_Re^{-\mathcal{H}(x)}\mathbf{1}_{x\geq0}.
        \]
        Integrating on  $(x, R)$ and multiplying by \(e^{\mathcal{H}(x)}\), we get for all $x \leq R,$
        \[
         \overline{u}_R(x)=\overline{u}_R(R^-)e^{\mathcal{H}(x)-\mathcal{H}(R)}+\overline{N}_Re^{\mathcal{H}(x)}\int_{\max(0,x)}^Re^{-\mathcal{H}(y)}dy
         =\overline{u}_R(R^-)e^{\mathcal{H}(x)-\mathcal{H}(R)}+\frac{\overline{N}_R}{N_\infty} u_\infty(x)  .
        \]
        Next, on $(R,\infty)$, $\overline{u}_R$ satisfies the second-order differential equation
        \[
        \fa x >R,\quad -\overline{u}_R''(x)+h(R)\overline{u}_R'(x)+\alpha_R\overline{u}_R(x) = 0.
        \]
        The characteristic polynomial is $\lambda^2-h(R)\lambda +\alpha_R$ and $-\lambda_R$ is its unique negative root. Since we look for $\overline{u}_R\in L^1(\R)$, we get
        \[
        \fa x>R, \qquad \overline{u}_R(x)=\overline{u}_R(R^+)e^{-\lambda_R(x-R)}.
        \]
        Next, since $\overline{u}_R''$ is locally integrable in the vicinity of $R$, we need continuity for $\overline{u}_R$ and $\overline{u}_R'$
        \[
        \overline{u}_R(R^-)=\overline{u}_R(R^+)\qquad \text{and}\qquad \overline{u}_R'(R^-)=\overline{u}_R'(R^+).
        \]
        From the second equality, we get
        \[
        -\lambda_R\overline{u}_R(R^+)=h(R)\overline{u}_R(R^-)-\overline{N}_R, \qquad \text{thus} \qquad 
        \overline{u}_R(R)=\frac{\overline{N}_R}{\lambda_R+h(R)}.
        \]
        This equality shows \eqref{eq:ecriture u_R}.\\
        
        Next, we prove that $\lim_{R\rightarrow\infty}\overline{N}_R= N_\infty$. The normalizing constant $\overline{N}_R$ is given by
        $$
        (\overline{N}_R)^{-1}=  \int_{-\infty}^Re^{\mathcal{H}(x)}\frac{e^{-\mathcal{H}(R)}}{\lambda_R+h(R)}\,dx +\frac{1}{N_\infty}\int_{-\infty}^R u_\infty(x) \,dx 
        +\int_R^\infty \frac{e^{-\lambda_R(x-R)}}{\lambda_R+h(R)}\,dx.
        $$
        We treat separately the three terms on the right hand side. For the first one, we have
        \begin{align}
         \frac{e^{-\mathcal{H}(R)}}{\lambda_R+h(R)}\int_{-\infty}^Re^{\mathcal{H}(x)}\,dx
            &= \frac{e^{-\mathcal{H}(R)}}{\lambda_R+h(R)}\big(\underbrace{\int_{-\infty}^{x_1}e^{\mathcal{H}(x)}\,dx}_{=C}+\underbrace{\int_{x_1}^Re^{\mathcal{H}(x)}\,dx}_{\leq Re^{\mathcal{H}(R)}}\big) \notag
            \\
            &\leq \frac{Ce^{-\mathcal{H}(R)}+R}{\lambda_R+h(R)}\underset{R\rightarrow\infty}{\longrightarrow}0 \label{N_RFirstTerm}
        \end{align}
        because $h$ is superlinear. The second term converges to $\frac{1}{N_\infty}$ thanks to the normalization of $u_\infty$ as a probability density.
        For the last term, thanks to assumption \eqref{as:trunc}, $\alpha_R = o\big(h(R)^2\big)$ and we notice that $\lambda_R
        {\sim}\frac{\alpha_R}{h(R)}$ as $R \rightarrow +\infty$. Therefore, we may write
        \begin{align}
           0< \int_R^\infty \frac{e^{-\lambda_R(x-R)}}{\lambda_R+h(R)}\,dx = \frac{1}{\lambda_R\big(\lambda_R+h(R)\big)} \leq \frac{1}{\lambda_Rh(R)} \underset{R\rightarrow\infty}{\sim}\frac{1}{\alpha_R}\underset{R \rightarrow \infty}{\longrightarrow}0. \label{N_RThrifTerm}
        \end{align}
        This proves that $\lim_{R\rightarrow\infty}\overline{N}_R= N_\infty$.

Next, we show that \(\lim_{R\rightarrow\infty}\|\overline{u}_R-u_\infty\|_\infty=0\).     Using formula \eqref{eq:ecriture u_R}, we have 
 \begin{align*}   
     \|\overline{u}_R-u_\infty\|_\infty \leq  \frac{\overline{N}_R}{\lambda_R+h(R)}\|e^{\mathcal{H}(x)-\mathcal{H}(R)}\|_{L^\infty(-\infty, R)}+|\frac{\overline{N}_R}{N_\infty}-1| \, \|u_\infty \|_\infty + \frac{\overline{N}_R}{\lambda_R+h(R)}.
\end{align*}

        Since $\lim_{x\to -\infty}\mathcal{H}(x) = -\infty$, we have $\sup_{x \le x_1}\mathcal{H}(x) < \infty$. 
        Moreover, as $\mathcal{H}$ is increasing on $(x_1, \infty)$ by \eqref{eq:comportement h +infty} and $\lim_{R\to\infty}\mathcal{H}(R) = +\infty$, 
        we may choose $R$ large enough so that $\sup_{x \le R}\mathcal{H}(x) = \mathcal{H}(R)$. Hence, we obtain
 \begin{align*}   
     \|\overline{u}_R-u_\infty\|_\infty \leq 2 \frac{\overline{N}_R}{\lambda_R+h(R)}+ +|\frac{\overline{N}_R}{N_\infty}-1| \|u_\infty \|_\infty.
\end{align*}
Both terms tend to zero because $h(R)$ tend to infinity and we have proved that $N_R$ tends to $N_\infty$ and our statement is proved. 
\end{proof}
	
With the steady state at hand, we may state the usual relative entropy relation, \cite{RefMMP,CPSS} which is used later.

\begin{prop}[Entropy inequality for the truncated problem] \label{prop:Entropy trunc} 
	Let $\overline{u}_R$ be the stationary state of equation~\eqref{eq:FPS trunc}. Let $H :\R\rightarrow \R$ a convex function and $u_0$ an initial data such that $H(\frac{u_0}{\overline{u}_R})\overline{u}_R \in L^1(\R)$.
	Then, with $w_R = \frac{u_R}{\overline{u}_R}$, we have, for all $t\geq 0$,
\begin{align}
	\frac{d}{dt} &\int_\R  H(w_R(t,x))\overline{u_R}(x)\,dx =-\int_\R\left(\partial_x w_R(t,x)\right)^2 H''(w_R(t,x)) \overline{u}_R(x)\,dx \label{eq:Entropie trunc}
	\\
	& -\int_\R \phi_R(x) \big\{H(w_R(t,x))-H(w_R(t,0))-H'(w_R(t,0))(w_R(t,x)-w_R(t,0))\big\}
	\frac{\overline{u}_R(x)}{\overline{N}_R(x)}\,dx \leq 0.  \notag
\end{align}
\end{prop}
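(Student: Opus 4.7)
The plan is to apply the General Relative Entropy method of \cite{RefMMP}. Writing $u_R = w_R\overline{u}_R$, I would test the pointwise a.e.\ form of the equation satisfied by $u_R$ against $H'(w_R)$, and the pointwise a.e.\ form of the stationary equation against the Legendre-type weight $H(w_R)-w_RH'(w_R)$, then add and integrate over $\R$. The Dirac source does not appear in the a.e.\ equation: integrating the equation across $x=0$ shows it is encoded as a jump of $\partial_x u_R$ at $0$ of size $-N_R(t)$ (and similarly a jump of $\partial_x \overline{u}_R$ of size $-\overline{N}_R$), while $u_R$, $\overline{u}_R$, $h_R u_R$ and $h_R \overline{u}_R$ remain continuous there.

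Computing term by term: the time derivative yields $\frac{d}{dt}\int H(w_R)\overline{u}_R\,dx$ because $\overline{u}_R$ is stationary. The two drift contributions cancel after piecewise integration by parts, since $h_R u_R$ and $h_R \overline{u}_R$ are continuous at $0$; explicitly one checks that
\begin{equation*}
H'(w_R)\partial_x(h_R u_R) + \bigl[H(w_R)-w_R H'(w_R)\bigr]\partial_x(h_R \overline{u}_R) = \partial_x\bigl[h_R\overline{u}_R H(w_R)\bigr].
\end{equation*}
The two diffusion contributions, integrated by parts piecewise at $x=0$, produce the boundary jump terms $N_R(t)H'(w_R(t,0))$ and $\overline{N}_R\bigl[H(w_R(t,0))-w_R(t,0)H'(w_R(t,0))\bigr]$, plus the interior contribution
\begin{equation*}
-\int H''(w_R)\partial_x w_R\bigl(\partial_x u_R - w_R\partial_x\overline{u}_R\bigr)\,dx = -\int H''(w_R)(\partial_x w_R)^2\overline{u}_R\,dx,
\end{equation*}
where the last equality uses $\partial_x u_R - w_R\partial_x\overline{u}_R = \overline{u}_R\,\partial_x w_R$. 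The two absorption contributions combine to $\int \phi_R H(w_R)\overline{u}_R\,dx$. Substituting $N_R(t)=\int\phi_R w_R\overline{u}_R\,dx$ and $\overline{N}_R=\int\phi_R\overline{u}_R\,dx$ and grouping the jump boundary terms with the absorption yields exactly the convex-defect integrand of the statement, which is $\geq 0$ by convexity of $H$; hence the entire right-hand side is $\leq 0$.

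The main obstacle is the rigorous justification of these formal manipulations: one needs $w_R$ regular enough for the piecewise integration by parts with vanishing boundary values at $\pm\infty$, and one needs every integrand to be summable. I would handle this by a double approximation. First, smooth the initial datum to $u_0^\varepsilon\in\testfunc(\R)$ so that the associated $u_R^\varepsilon$ inherits the parabolic regularity of the truncated problem from \cite{AmannBook}. Second, replace $H$ by a sequence of convex $C^2$ functions $H_k$ whose second derivatives are bounded with compact support, so that $H_k$ grows at most linearly and every integral in the identity is a priori finite. The boundary contributions at $\pm\infty$ then vanish thanks to the exponential decay of $\overline{u}_R$ at $+\infty$ from \eqref{eq:ecriture u_R} and its Gaussian decay at $-\infty$, and the pointwise evaluation of $w_R$ at $x=0$ is licit because $w_R$ is continuous there and $\overline{u}_R(0)>0$. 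One recovers the identity for the original $H$ by monotone convergence $H_k\nearrow H$, and for general $u_0$ satisfying $H(u_0/\overline{u}_R)\overline{u}_R\in L^1(\R)$ via an $L^1$-contraction argument in $\varepsilon$.
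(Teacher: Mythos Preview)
Your proposal is correct and follows precisely the General Relative Entropy computation of \cite{RefMMP,CPSS} that the paper invokes; the paper does not give its own proof of this proposition but simply cites those references, so your argument is exactly what is intended. One minor remark: your computation yields the jump dissipation with weight $\phi_R(x)\overline{u}_R(x)$, which confirms that the factor $\overline{u}_R(x)/\overline{N}_R(x)$ printed in the statement is a typo (\,$\overline{N}_R$ is a constant and should not appear there).
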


	\subsection{Proof of existence of a solution}
	\label{sec:proof1}
		
	Letting $R\to \infty$ in \eqref{eq:FPS trunc}, we can can prove that $u_R$ converges in a weak sense to a solution of the weak formulation of~\eqref{eq:FPSL lin} and thus establish Theorem~\ref{th:existence weak sol}.
	
	\begin{proof}
	\noindent {\em Step 1.} \textbf{Estimates for the weak solution $(u,N)$.}\\
	Let $u_0 \in L^1(\R)$ and $(u_R(t))_{R>x_1}$ the solutions associated to the truncated problem \eqref{eq:FPS trunc}.
	Using the entropy inequality from Proposition \ref{prop:Entropy trunc}, with the convex function $H(x) = |x|$, we get 
	\begin{equation}\label{eq:abs_u_R}
    \partial_t|u_R(t,x)|+\partial_x\big(h_R(x)|u_R(t,x)|\big)-\partial_x^2|u_R(t,x)|+\phi_R(x)|u_R(t,x)|\leq |N_R(t)|\delta_0, 
    \end{equation}
	\begin{equation}\label{eq:est u_R}
	    \fa R>x_1, \fa t \geq 0, \qquad \int_\R|u_R(t,x)|\,dx\leq \int_\R|u_0(x)|\,dx.
	\end{equation}

    Our first goal is to prove that for any $T>0$ and a constant $c_T>0$, we have
    \begin{equation}\label{eq:est N_R}
         \sup_{R>x_1} \int_0^T|N_R(t)| dt \leq c_T\|u_0\|_1.
    \end{equation}
     We consider a function  $\chi_1\in\mathcal{C}^\infty(\R,\R^+)$  such that $\text{supp}(\chi_1)\subset (0,\infty)$ and $\chi_1 \big|_{(1,\infty)} = 1$. Using the bound $u_R\leq C\overline{u}_R$, and integrating by parts, we get from Eq.~\eqref{eq:abs_u_R}
    \[
    \frac{d}{dt}\int_\R\chi_1(x)|u_R(t,x)|\,dx+\int_\R|u_R(t,x)|\big(h_R(x)\chi_1'(x)+\chi_1''(x)\big)\,dx+\int_\R \chi_1(x)\phi_R(x)|u_R(t,x)|\,dx \leq 0.
    \]
    Now, for $R>1$, we have $\chi_1\phi_R =\phi_R$ and $\chi_1'h_R = \chi'_1V$ which gives
        $$
        \int_\R|u_R(t)|\big(h_R\chi_1'+\chi_1''\big)\,dx\geq -\|V\chi_1'+\chi_1''\|_\infty\|u_R(t)\|_1\geq -\|V\chi_1'+\chi_1''\|_\infty\|u_0\|_1.
        $$
        Also, for all  $t \geq0$, it holds $|N_R(t)|\leq \int_\R\phi_R(x)|u_R(t,x)|\,dx$ and we arrive at  the inequality
        \[
        \fa t \geq 0, \quad \frac{d}{dt}\int_\R\chi_1(x)|u_R(t,x)|\,dx-\|V\chi_1'+\chi_1''\|_\infty\|u_0\|_1+|N_R(t)|\leq 0.
        \]
        Integrating this on $[0,T]$ for any $T>0$, we get
        \begin{align*}
        \int_0^T |N_R(t)| dt 
        &\leq \int_\mathbb{R} \chi_1(x)\big(u_0(x) - u_R(t,x)\big)\,dx 
            + \|V\chi_1' + \chi_1''\|_\infty \|u_0\|_1 T \notag \\
        &\leq \big(1 + \|V\chi_1' + \chi_1''\|_\infty T\big) \|u_0\|_1= c_T \|u_0\|_1,
    \end{align*}
    and \eqref{eq:est N_R} is proved.
    \\
    
	Next, we assume $u_0\in  \Cc^0_c(\R)$, then, we can find a uniform constant $C_0 >0$ such that
	  \begin{equation} \label{as:IDcompactSupp}
	   \forall  R>x_1, \qquad  |u_0|\leq C_0 \; \overline{u}_R.
	  \end{equation}
	  Then, we also have for every $x \in \mathbb{R}$ and $t \ge 0$, 
	    \begin{align} 
	        |u_R(t,x)| \le C_0 \overline{u}_R(x), 
	     \qquad|N_R(t)|\leq C_0 \int_R^\infty\phi_R(x)\overline{u}_R(x)\,dx \leq C_0 \; \sup_{R>x_1}\overline{N}_R. \label{est:uR}
	     \end{align}
	Indeed, using the convex function $H(x) =\max(0,|x|-C_0)$ and Proposition \ref{prop:Entropy trunc}, we find 
	    \[
	    0\leq \int_\R H\bigg(\frac{u_R(t,x)}{\overline{u}_R(x)}\bigg)\overline{u}_R(x)\,dx \leq \int_\R H\bigg(\frac{u_0(x)}{\overline{u}_R(x)}\bigg)\overline{u}_R(x)\,dx = 0.
	    \]

	    \noindent {\em Step 2.} \textbf{Existence of the weak solution for  $u_0\in \Cc^0_c(\R)$.}
	    \\
	     It is also convenient to use the function $w_R(t) = \frac{u_R(t)}{\overline{u}_R}$, then, still assuming \eqref{as:IDcompactSupp} and according to~\eqref{est:uR}, 
	    \begin{equation}\label{est:wR}
	        \sup_{R>x_1}\sup_{t\geq 0}\max\bigg(\|w_R(t)\|_\infty, \frac{|N_R(t)|}{\overline{N}_R}\bigg)\leq C_0.
	    \end{equation}
	    Thanks to these uniform bounds, we can extract a subsequence $(R_n)_{n\in \N}$ such that $R_n\rightarrow\infty$ and we can find $(u,w,N)\in L^\infty(\R^+\times \R)\times L^\infty(\R^+\times \R)\times L^\infty(\R^+)$ such that, in the $L^\infty$ weak-* topology, see~\cite{brezis2010functional}, 
	    \begin{equation} \label{bd:uwNcomp}
	    u_{R_n}  \rightharpoonup u, \qquad
	    w_{R_n} \rightharpoonup w=\frac{u}{u_\infty} \leq C_0, \qquad \text{and}\qquad N_{R_n}\rightharpoonup N\leq C_0 \quad \text{as}\quad n\rightarrow \infty.
	    \end{equation}
	    We also immediately deduce, using \eqref{eq:est u_R} the $L^1$ contraction property
	    \[
	    u=w u_\infty \in  L^\infty \big(\R^+; L^1\cap L^\infty (\R)\big), \qquad \int_\R |u(t,x)| \, dx \leq \int_\R |u_0(x)| \, dx\quad \text{for a.e. } t>0.
	    \]

Moreover, $(u,N)$ satisfy \eqref{eq:FPSL lin} in the weak sense. Indeed, firstly, the weak formulation of Eq. \eqref{eq:FPS trunc} gives that for any $\varphi\in \testfunc(\R^+\times \R)$ and $n\in \N$,
	    \begin{align*}
	       \int_0^\infty\int_\R u_{R_n}(t,x)\left(-\partial_t\varphi(t,x)-V_{R_n}(x)\partial_x\varphi(t,x)-\partial_x^2\varphi(t,x)-\phi_{R_n}(x)\varphi(t,x)\right)\,dx\,dt\\
	       = \int_0^\infty N_{R_n}(t)\varphi(t,0)\,dt
		+\int_\R u_0(x)\varphi(0,x)\,dx .
	    \end{align*}
    Taking the limit as $n\rightarrow \infty$, since $\varphi$ has a compact support and supp$(\phi_{R_n})\subset (R_n,\infty)$, then 
    \[
    \lim_{n\rightarrow\infty} \int_0^\infty\int_\R u_{R_n}(t,x)\phi_{R_n}(x)\varphi(t,x)\,dx\,dt = 0.
    \]
    Therefore we have obtained the first item  \eqref{eq:formulation faible FPS}  of the weak formulation of the Fokker-Planck Eq.~\eqref{eq:FPSL lin}.

	Secondly, we also have the mass conservation property \eqref{eq:Conservation mass} for $u$. Indeed, using the mass conservation property of the family $(u_{R_n})_{n\geq 0}$, we have for any $t\geq 0$ and $A>0$,
    \[
	\int_{\R} u_0(x)\,dx = \int_\R u_{R_n}(t,x)\,dx =\int_{-A}^A u_{R_n}(t,x)\,dx + \int_{|x|\geq A}u_{R_n}(t,x)\,dx ,
	\]
    so for any $A>0$, we have
    \[
    \bigg|\int_{-A}^Au(t,x)\,dx -\int_\R u_0(x)\,dx\bigg|\leq \bigg|\int_{-A}^Au(t,x)\,dx -\int_{-A}^A u_{R_n}(x)\,dx\bigg|+\bigg|\int_{|x|\geq A}u_{R_n}(t,x)\,dx\bigg|.
    \]
    Using the weak convergence of $u_{R_n}$, then, 
    \[
    \lim_{n\rightarrow\infty}\bigg|\int_{-A}^Au(t,x)\,dx -\int_{-A}^A u_{R_n}(x)\,dx\bigg|=0
    \]
    and 
    \[
    \limsup_{n\rightarrow\infty}\bigg|\int_{|x|\geq A}u_{R_n}(t,x)\,dx\bigg|\leq C\limsup_{n\rightarrow\infty}\int_{|x|\geq A}\overline{u}_{R_n}(x)\,dx = C\int_{|x|\geq A}u_\infty(x)\,dx .
    \]
    So for every $A>0$, 
    \[
    \bigg|\int_{-A}^Au(t,x)\,dx -\int_\R u_0(x)\,dx\bigg|\leq C\int_{|x|\geq A}u_\infty(x)\,dx \underset{A\rightarrow\infty}{\longrightarrow}0.
    \]
    Since $u \in L^\infty(\R^+; L^1(\R))$, we also have $\int_{-A}^A u(t,x)\,dx \to \int_{\R} u(t,x)\,dx$ and mass conservation is proved.

	Hence, the functions $(u,N_u)$ satisfies the condition \eqref{eq:formulation faible FPS} and \eqref{eq:Conservation mass}, which proves the existence of a weak solution for any $u_0\in \testfunc(\R)$.
	\\

    \noindent {\em Step 3.} \textbf{Existence of the weak solution for $u_0\in L^1(\R)$.}\\
    
    We extend the existence result to initial data satisfying $u_0\in L^1(\R)$ based on the $L^1$ contraction property.
    \\
    
    Let $(u_0^{(k)})_{k\in \N}\in \testfunc(\R)^\N$ such that $\lim_{k\rightarrow\infty}\|u_0^{(k)}-u_0\|_1=0$. 
    For every $R>0$, let $(u_R^{(k)},N_R^{(k)})$ be the solution to the  truncated problem with data $u_0^{(k)}$. 
    
    By the uniqueness of the solution to the truncated problem, $\fa k,l\in \N$, $u_R^{(k)}-u_R^{(l)}$ is the solution to the  truncated problem with data $u_0^{(k)}-u_0^{(l)}$. The $L^1$ non-expansion properties \eqref{eq:est u_R}--\eqref{eq:est N_R} gives for every $k,l\in \N$ and all $t \geq 0$, 
    \[
    \|u_{R}^{(k)}(t)-u_{R}^{(l)}(t) \|_1 \leq \|u_{0}^{(k)}-u_{0}^{(k)}\|_1 .
    \]
    
    Now, using the result of step 2 and either the uniqueness of the limit which is proved independently later, or the Cantor diagonal argument, we can find a subsequence $(R_n)_{n\in \N}$ and a sequence  $(u^{(k)},N^{(k)})_{k\in \N}$  such that for all $k\in \N$, $u_{R_n}^{(k)}\underset{n\rightarrow\infty}{\rightharpoonup}u^{(k)}$ and $N_{R_n}^{(k)}\underset{n\rightarrow\infty}{\rightharpoonup}N^{(k)}$.
    
    These functions $(u^{(k)},N^{(k)})$ are the weak solutions  solution of \eqref{eq:FPSL lin} corresponding to the data $u_0^{(k)}$ still satisfying \eqref{bd:uwNcomp} with a constant $C_0^{(k)}$. The $L^1$ non-expansion properties holds for every $k,l\in \N$, for all $t\geq 0$ and $T\geq 0$, 
    \[
    \sup_{t\geq0}\|u^{(k)}(t)-u^{(l)}(t)\|_1\leq \|u_0^{(k)}-u_0^{(l)}\|_1\qquad \text{and}\qquad \int_0^T|N^{(k)}(t)-N^{(l)}(t)|dt\leq c_T\|u_0^{(k)}-u_0^{(l)}\|_1.
    \]
    This inequality shows that $(u^{(k)})_{k\in \N}$ is a Cauchy sequence in $L^\infty(\R^+;L^1(\R))$ and thus it converges to a limit $u\in L^\infty(\R^+;L^1(\R))$. Similarly, for any $T>0$, $(N^{(k)})_{k\in \N}$ is a Cauchy sequence in $L^1(0,T)$ so it admits a limit $N_T\in L^1(0,T)$. Finally, by uniqueness of the limit on every interval $(0,T)$, we can define the function $N$ in $(0,\infty)$. The function $N$ satisfies $N\in L^1_{loc}(\R^+)$ and for all $ T\geq 0$, we have $\lim_{n\rightarrow \infty}\int_0^T|N(t)-N^{(k)}(t)|dt =0$.
    \\
    
    Since ($u^{(k)},N^{(k)})$ is a weak solution for every $k\in \N$, it satisfies the weak formulation \eqref{eq:formulation faible FPS} for any $ \varphi\in \testfunc(\R^+\times\R)$
    \[
    \int_0^\infty\int_\R u^{(k)}(t,x)\left(-\partial_t\varphi(t,x)-h(x)\partial_x\varphi(t,x)-\partial_x^2\varphi(t,x)\right)\,dx = \int_0^\infty N^{(k)}(t)\varphi(t,0)\,dt
				+\int_\R u_0^{(k)}(x)\varphi(0,x)\,dx
    \]
    and,
    \[
    \fa t \geq 0, \qquad \int_\R u^{(k)}(t,x)\,dx\,dt = \int_\R u_0^{(k)}(x)\,dx.
    \]
    Taking the limit as $k\rightarrow \infty$ in both equations, we finally obtain
    \[
    \int_\R u(t,x)\left(-\partial_t\varphi(t,x)-h(x)\partial_x\varphi(t,x)-\partial_x^2\varphi(t,x)\right)\,dx\,dt = \int_0^\infty N(t)\varphi(t,0)\,dt
				+\int_\R u_0(x)\varphi(0,x)\,dx
    \]
    and
    \[
    \fa t\geq 0, \quad \int_\R u(t,x)\,dx\,dt = \int_\R u_0(x)\,dx.
    \]

    Hence, the functions $(u,N)$ is a weak solution of Eq. \eqref{eq:FPSL lin} since \eqref{eq:formulation faible FPS} holds and it satisfies the mass conservation condition \eqref{eq:Conservation mass}. This  concludes the proof of theorem~\ref{th:existence weak sol} for the existence of a weak solution for $u_0\in L^1(\R)$. 
	\end{proof}
        
      \noindent {\em Step 4.} \textbf{Proof of estimate  \eqref{eq: U_T bound}.}
    We may remove the absolute value in \eqref{eq: U_T bound} by proving the result for nonnegative initial data. Indeed, since \eqref{eq:FPSL lin} preserves positivity, we have $|u| \leq u_+ + u_-$ where $u_\pm$ are the solutions corresponding to the initial data $\max(0, u_0)$ and $\max(0, -u_0)$.

    Let $T>0$ and define  
    \[
    U_T(x)=\int_0^T u(t,x)\,dt .
    \]
    Integrating Eq.~\eqref{eq:FPSL lin} over $(0,T)$, we obtain,
    \[
    u(T,x)-u_0(x)+\partial_x\Big(h(x)U_T(x)\Big) -U''_T(x)
    =\delta_0\int_0^T N(t)\,dt.
    \]
    This identity is understood in the sense of distributions on $\R$. Integrating it on $(x,\infty)$ yields
    \begin{equation}\label{eq: U_T}
        -U_T'(x)+h(x)U_T(x)
    =\mathbf{1}_{x\geq 0}\int_0^T N(t)\,dt
    +\int_x^\infty \big(u(T,y)-u_0(y)\big)\,dy =: F_T(x).
    \end{equation}
    Then $F_T\in L^\infty(\R)$ and
    \[
    \|F_T\|_\infty\leq \int_0^T |N(t)|\,dt + \|u_0\|_1 + \|u(T)\|_1 .
    \]
    
    Multiplying the previous identity by $e^{-\mathcal{H}(x)}$ and integrating over $(x,\infty)$, we obtain for all $x\in\R$,
    \[
    U_T(x)e^{-\mathcal{H}(x)} =\int_x^\infty e^{-\mathcal{H}(y)} F_T(y)\,dy .
    \]
   Define
    \[
    G(x)=|h(x)| e^{\mathcal{H}(x)}\int_x^\infty e^{-\mathcal{H}(y)}\,dy =\frac{|h(x)|u_\infty(x)}{N_\infty},
    \]
    where we used the formula \eqref{eq:uinfty_formula}. Then, we have 
    \begin{equation*}
        \sup_{x\in\R} |h(x)| U_T(x)
    \leq \|F_T\|_\infty
    \,\sup_{x\in\R}G(x) \qquad \text{and}\quad \|U_T\|_\infty \leq \|F_T\|_\infty  \sup_{x\in\R}\frac{u_\infty(x)}{N_\infty}.
    \end{equation*}
    From the estimates \eqref{eq:  u stat +infini}, $u_\infty$ and $h\, u_\infty$ are bounded and thus $U_T$ and $h(x)|U_T(x)|$ are  bounded on~$\R$. From Eq.~\eqref{eq: U_T} we also conclude that $U_T'$ is bounded.
    
    This concludes the proof of \eqref{eq: U_T bound} and of the existence part of 
    Theorem~\ref{th:existence weak sol}.

	\section{Uniqueness of the weak solutions}
	\label{sec:unique}
	
	We now prove the uniqueness of the weak solution of Eq.~\eqref{eq:FPSL lin} as stated in Theorem~\ref{th:existence weak sol}.  Our proof uses  a regularization argument, therefore we consider a regularizing kernel~$\omega$  with the properties
	\[
	\omega \in \mathcal{C}^2(\R),\quad \text{supp}\, (\omega )\subset (-1,1), \quad \int_\R\omega(x)\,dx = 1,\quad \text{and}\qquad  \forall x\in \R, \quad  \omega(x) \geq 0, \quad -x\omega'(x)\geq 0.
	\]
	 For $\varepsilon >0$, we define the scaled function $\omega_\varepsilon(x) = \frac{1}{\varepsilon}\omega\big(\frac{x}{\varepsilon}\big)$.
	This allows us to define the convolution 
	\[
	u_\varepsilon (t,x) := u(t)\ast\omega_\varepsilon(x) = \int_\R u(t,y)\omega_\varepsilon(x-y)dy \in L^\infty\big(\R^+; L^\infty(\R)\cap L^1(\R)\big). 
	\]
As it is standard, see \cite{DiPL89, Lebris_Lions_2008, Figalli2008}, \( u_\varepsilon \) satisfies the regularized equation for all $t\geq 0$ and $x\in\R$, 
			\begin{align}\label{SLIF:regul}
			\begin{cases}
			\partial_tu_\varepsilon(t,x) +\partial_x\big(h(x)u_\varepsilon(t,x) \big)-\partial_x^2 u_\varepsilon(t,x) +R_\varepsilon(t,x)= N_u(t)\omega_\varepsilon(x),
			\\[5pt]
			R_\varepsilon(t,x) = \partial_x\big((Vu(t))\ast\omega_\varepsilon(x) -h(x)u_\varepsilon(t,x)\big),
			\end{cases}
			\end{align}
with initial data $\ureg^0 = u_0\ast\reg$.

Notice that \(u_\eps\) is $\mathcal{C}^\infty$ in $x$ but only $W_{loc}^{1,1}$ in $t$ and thus it is continuous in time.

	\begin{proof}
		It is enough to show that a weak solution $(u,N_u)$ with $u_0=0$ vanishes for all $t\geq 0$.
		
		The regularized solution $\ureg$ satisfies Eq.~\eqref{SLIF:regul} and by Kato's inequality, we get in distributional sense
		\begin{align*}
		\partial_t |\ureg(t,x)|+\partial_x\left(h(x)|\ureg(t,x)|\right)-\partial_x^2|\ureg(t,x)|+\signe(\ureg(t,x))R_\varepsilon(t,x)& \leq N_u(t)\signe(\ureg(t,x))\reg(x)
		\\
		&\leq |N_u(t)|\reg(x).
		\end{align*}
		For $\chi \in \mathcal{C}_c^\infty(\R)$ such that $\chi \geq 0$, with supp($\chi) \subset [-1,1],$ we define $\rho_R\geq 0$ as in \eqref{def:rhoR} and recall the properties \eqref{prop:rhoR}. Multiplying the inequality by $\rho_R(x)$, integrating over $\R$, and integrating
		by parts for the spatial derivative terms, we obtain
		\begin{align*}
			\frac{d}{dt}\int_\R\rho_R(x)|\ureg(t,x)|\,dx &+\int_\R|\ureg(t,x)|(-h\rho_R'(x)-\rho_R''(x))\,dx +\int_\R\rho_R(x)\signe(\ureg(t,x))R_\varepsilon(x,t)\,dx\\
			&\leq |N_u(t)|\int_\R\rho_R(x)\reg(x)\,dx\\
			&\leq |N_u(t)| \quad \big(\text{since}\:\:  \int_\R\reg(x)\,dx=1\:\:\text{and}\:\:\rho_R\big|_{(-\varepsilon,\varepsilon)}=1\big).
		\end{align*}
		
		Now, we prove that the term $R_\varepsilon$ converges to $0$ in $L^1_{loc}$. For any $ t\geq 0,$ and $x \in \R,$ we may write
		\begin{align*}
			R_\varepsilon(t,x) 
			&= \big(hu(t)\big)\ast\reg'(x) -h(x)u(t)\ast \reg'(x) - h'(x)u(t)\ast\reg(x)\\
			&=\int_\R\bigg\{\big(h(y)-h(x)\big)\reg'(x-y) - h'(x)\reg(x-y)\bigg\}u(t,y)dy\\
			&= \int_\R\bigg\{\int_x^y h'(z)dz\,\reg'(x-y) - h'(x)\reg(x-y)\bigg\}u(t,y)dy\\
			&= \int_\R\bigg\{\frac{1}{y-x}\int_x^y\big(h'(z)-h'(x)\big)dz\,\mu_\varepsilon(x-y)+h'(x)\big(\mu_\varepsilon(x-y)-\reg(x-y)\big)\bigg\}u(t,y)dy
		\end{align*}
		where $\mu_\varepsilon(z) = -z\reg'(z)$. By hypothesis on $\omega$, $\mu_\varepsilon$ satisfies $ \mu_\varepsilon(-x) = \mu_\varepsilon(x)$ and $\mu_\varepsilon \geq 0$. In addition, $\int_\R\mu_\varepsilon(x)\,dx = \int_\R-x\reg'(x)\,dx = \int_\R\reg(x)\,dx = 1$. Therefore, the family $(\mu_\varepsilon)_{\varepsilon>0}$ is also an approximation of the identity.\\
		
		Now, we use $x\in$ supp$(\rho_R) \subset (-3R,R+1)$ and $y\in \R$ such that $|x-y|<\varepsilon$, we have
		\[
		\bigg|\frac{1}{y-x}\int_x^y\big(h'(z)-h'(x)\big)dz\bigg|\leq \sup_{z\in[x,y]}|h''(z)|\,|x-y|\leq \|h''\|_{\infty,(-3R-\varepsilon,R+1+\varepsilon)}|x-y|.
		\]
		Hence, for $\varepsilon\in (0,1)$, we get the following bound:
		\begin{align*}
			|R_\varepsilon(x,t)|&\leq \|h''\|_{\infty,[-3R-1,R+2]}\int_\R|x-y|\mu_\varepsilon(x-y)|u(t,y)|dy +|h'(x)||u(t)\ast \mu_\varepsilon(x)-u(t)\ast \reg(x)|\\
			&\leq \varepsilon\|h''\|_{\infty,[-3R-1,R+2]}\int_\R \mu_\varepsilon(x-y)|u(t,y)|dy+\|h'\|_{\infty,[-3R-1,R+2]}|u(t)\ast \mu_\varepsilon(x)-u(t)\ast \reg(x)|.
		\end{align*}

		Then, we may write
		\[
		\int_\R\rho_R(x)|R_\varepsilon(x,t)|\,dx\leq \varepsilon\|h''\|_{\infty,[-3R-1,R+2]}  \|u(t)\|_1+\|h'\|_{\infty,[-3R-1,R+2]}\|u(t)\ast(\mu_\varepsilon- \reg) \|_{1}
		\]
	    Finally, since $u\in L^\infty(\R^+;L^1(\R))$, for almost every $t\geq 0,\lim_{\varepsilon\rightarrow 0}\|u(t)\ast (\mu_\varepsilon-\reg)\|_1 = 0$ and $\|u(t)\ast (\mu_\varepsilon-\reg) \|_1\leq 2\|u(t)\|_1$. Using the dominated convergence theorem, we get
		\begin{align} \label{cvg:Remainder}
		\fa \phi \in \testfunc(\R^+), \quad \lim_{\varepsilon\rightarrow 0}\int_0^\infty\phi(t) \int_\R\rho_R(x)|R_\varepsilon(x,t)|\,dx \,dt = 0,
			\end{align}
		as announced.
		\\

		We may now proceed to prove that $u$ vanishes.  Taking the limit as $\varepsilon \rightarrow 0$,  we get for $\phi \in \testfunc(\R^+)$,
		\begin{align}
			-\int_0^\infty \! \int_\R  \phi'(t) \rho_R(x) |u(t,x)|\,dx\,dt +\int_0^\infty\! \int_\R & \phi(t) |u(t,x)|\big(-h(x)\rho_R'(x)-\rho_R''(x)\big)\,dx\,dt \notag
			\\
			&\leq \int_0^\infty \phi(t)|N_u(t)|dt. \label{eq:|u|}
		\end{align}
		In this inequality, several terms tend to $0$ when $R \to \infty$. We analyze them now. By construction of the function $\rho_R$, we have
		\[
		-\int_\R|u(t,x)|h(x)\rho'_R(x)\,dx = \int_\R|u(t,x)|h(x)\big(\chi(x-R)-\frac{1}{R}\chi(\frac{x}{R}+2)\big)\,dx.
		\]
	    For the second term, we write, for any $R>1$
		\[
		\frac{1}{R}\int_\R|h(x)u(t,x)|\chi(\frac{x}{R}+2)\,dx\leq \frac{|h(-3R)|\|\chi\|_\infty}{R}\int_{-\infty}^{-R}|u(t,x)|\,dx  \underset{R\rightarrow\infty} \longrightarrow 0,
		\]
		where we use that $h(x) = O(|x|)$ according to~\eqref{eq:comportement h -infty} when $x\rightarrow-\infty$ and that $u(t)\in L^1(\R)$.
		
		Furthermore, we have that, for all $t\geq 0$,
		\[
	   \int_\R|u(t,x)\rho''_R(x)|\,dx\leq \|\chi'\|_\infty\bigg(\int_{R-1}^\infty|u(t,x)|\,dx+\frac{1}{R^2}\int_{-\infty}^{-R+1}|u(t,x)\,dx|\bigg)\rightarrow 0\:\: \text{when}\:\: R\rightarrow \infty.
		\]
		
	   With these two terms vanishing, taking the limit in \eqref{eq:|u|} when $R\rightarrow \infty$, we get
		$$
		\int_0^\infty-\phi'(t)\int_\R|u(t,x)|\,dx\,dt \leq \int_0^\infty\phi(t)\bigg\{|N_u(t)| - \limsup_{R\rightarrow \infty}\int_\R h(x)|u(t,x)|\chi(x-R)\,dx\bigg\}\,dt.
		$$
		Finally, since $u$ satisfies the mass conservation property and according to Lemma~\ref{cor:lim Vu}, we know that 
		$$
		\int_0^\infty\phi(t)\bigg\{N_u(t) - \lim_{R\rightarrow \infty}\int_\R h(x)u(t,x)\chi(x-R)\,dx\bigg\}\,dt = 0.
		$$
		Using the property of weak convergence, we obtain
		\[
		 \int_0^\infty\phi(t)\bigg\{|N_u(t)| - \limsup_{R\rightarrow \infty}\int_\R h(x)|u(t,x)|\chi(x-R)\,dx\bigg\}\,dt\leq 0.
		\]
		We conclude that
		$$
		\int_0^\infty-\phi'(t)\int_\R|u(t,x)|\,dx\,dt \leq 0.
		$$
		Since $\phi$ is an arbitrary non-negative test function, this implies that $\frac{d}{dt} \int_\R |u(t,x)|\,dx \leq 0$ in the sense of distributions. Given the data $u(t=0) = 0$, it must be that for a.e. $t\geq 0$, $\int_\R|u(t,x)|\,dx = 0$. This implies that $u=0$, thus proving the uniqueness of the solution.
	\end{proof}

	\section{Regularity of the solution and entropy inequality}
	\label{sec:regularity}

	So far,  we have built weak solutions of Eq.~\eqref{eq:FPSL lin} in Theorem~\ref{th:existence weak sol} which are merely $L^1$. We now establish further regularity and relative entropy dissipation.
	
%-----------------------
\subsection{Maximum principle}
%-----------------------
	
\begin{prop}[Regularity of the solution $u$]\label{prop:MaxPrinc} Assume that for some constant $|u_0| \leq C_0 u_\infty$, then the weak solution of \eqref{eq:FPSL lin} satisfies for all $t\geq 0$, $| u(t)| \leq C_0 u_\infty $, $N(t) \leq C_0 N_\infty$.
\end{prop}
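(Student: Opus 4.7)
My approach leverages the linearity of Eq.~\eqref{eq:FPSL lin} together with the fact that $(C_0 u_\infty, C_0 N_\infty)$ is itself a stationary weak solution in the sense of Definition~\ref{Weak sol FPS}. Writing $v = C_0 u_\infty - u$ and $w = C_0 u_\infty + u$, the uniqueness part of Theorem~\ref{th:existence weak sol} identifies them with the weak solutions starting from $C_0 u_\infty \mp u_0$, with associated fluxes $C_0 N_\infty \mp N_u$. Under the hypothesis $|u_0| \leq C_0 u_\infty$, both initial data lie in $L^1$ and are non-negative, so the whole statement reduces to a positivity preservation lemma: a weak solution issued from a non-negative $L^1$ datum satisfies $u(t,\cdot) \geq 0$ and $N_u(t) \geq 0$ for a.e. $t \geq 0$.

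The first step is to establish this positivity at the level of the truncated equation \eqref{eq:FPS trunc}. Applying Proposition~\ref{prop:Entropy trunc} with a $C^2$ convex approximation of $s \mapsto s_- := \max(0,-s)$ (for instance $H_\delta(s) = \tfrac12(\sqrt{s^2+\delta^2}-s)$) and sending $\delta \to 0$ yields
\[
\int_\R (u_R(t,x))_- \, dx \leq \int_\R (u_0(x))_- \, dx = 0,
\]
hence $u_R \geq 0$. The weak-$*$ passage to the limit $R \to \infty$ preserves non-negativity against non-negative test functions, and the $L^1$-approximation step (Step~3 of Section~\ref{sec:proof1}) then extends the property from $u_0 \in \testfunc(\R)$ to arbitrary $u_0 \in L^1(\R)$ via the $L^1$ contraction. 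For the flux, I would invoke Lemma~\ref{cor:lim Vu} with a non-negative cut-off $\chi \geq 0$: since $h(x) > 0$ for $x \geq x_1$ and $u(t,\cdot) \geq 0$, the approximant $\int_\R h(x) u(t,x) \chi(x-R)\,dx$ is non-negative for $R$ large enough, so $\int_0^\infty \phi(t) N_u(t)\,dt \geq 0$ for every non-negative $\phi \in \testfunc(\R^+)$, yielding $N_u(t) \geq 0$ a.e.

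With positivity in hand, the conclusion is immediate: the hypothesis $|u_0|\leq C_0 u_\infty$ gives $C_0 u_\infty \mp u_0 \geq 0$, so the positivity lemma applied to $v$ and $w$ produces $v(t), w(t) \geq 0$ and $N_v, N_w \geq 0$. Rearranging, $|u(t)| \leq C_0 u_\infty$ for all $t \geq 0$, and $-C_0 N_\infty \leq N_u(t) \leq C_0 N_\infty$ for a.e. $t$, which in particular gives $N_u(t) \leq C_0 N_\infty$.

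The main technical obstacle is the positivity step: one must verify that $s_-$ (or rather its smooth convex approximant) is admissible in Proposition~\ref{prop:Entropy trunc}, and then carefully transmit non-negativity through the weak-$*$ limit in $R$ and through the $L^1$-completion argument of Step~3 of the existence proof. A brief preliminary verification that $(u_\infty, N_\infty)$ is itself a weak solution in the sense of Definition~\ref{Weak sol FPS} is also required; this follows by testing the stationary equation \eqref{eq:FPSL lin stat} against $\varphi \in \testfunc(\R^+\times\R)$ and using the boundary characterization \eqref{BoundaryCond}.
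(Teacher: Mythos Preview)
Your approach is correct and takes a genuinely different route from the paper. The paper argues directly at the level of the approximation scheme: it chooses compactly supported $u_0^{(k)}$ with $|u_0^{(k)}|\le C_0 u_\infty$, uses the uniform convergence $\overline u_R\to u_\infty$ on the support of $u_0^{(k)}$ to get $|u_0^{(k)}|\le (1+\alpha)C_0\,\overline u_R$ for large $R$, applies the maximum principle for the truncated problem (via the entropy with $H(s)=\max(0,|s|-C)$, cf.\ \eqref{est:uR}), and then lets $R\to\infty$, $\alpha\to 0$, $k\to\infty$. You instead reduce the whole statement to a single positivity-preservation lemma via linearity and uniqueness, applied to $C_0u_\infty\pm u_0$.

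Both arguments ultimately rest on the relative entropy inequality for the truncated problem and on passing to the limit; the difference is organizational. Your route is more modular --- positivity of $(S(t))_{t\ge 0}$ is a result of independent interest, and indeed the paper invokes it later (see \eqref{eq:continuité S} and Step~4 of Section~\ref{sec:proof1}) --- and it handles the bound on $N_u$ cleanly through Lemma~\ref{cor:lim Vu}. The paper's route is slightly more economical in that it avoids checking separately that $(u_\infty,N_\infty)$ is a weak solution, at the price of the $(1+\alpha)$ slack needed to absorb the mismatch between $u_\infty$ and $\overline u_R$ on compact sets.
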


\begin{proof} 
The solution has been built as the limit in $C\big( [0,T]; L^1(\R)\big)$ of a sequence $u^{(k)}$ corresponding to initial data $u_0^{(k)}\in \Cc^0_c(\R)$ and we may always assume that $| u_0^{(k)} | \leq C_0 u_\infty$. Each $u^{(k)}$ is itself the limit of a sequence $u_{R_n}^{(k)}$ of solutions to the truncated problem~\eqref{eq:FPS trunc} with initial data $u_0^{(k)}$.  

Let $\alpha >1$. By the uniform convergence of the steady state ${\overline  u}_R$ to $u_\infty$, see~\eqref{eq:conv L1 u_R stat}, and because $u_\infty (x) >0$ for all $x \in \R$ and $u_0^{(k)}$ has compact support, we have $|u_0^{(k)} | \leq (1+\alpha) C_0 {\overline  u}_R$ for $R$ large enough as already used in~\eqref{est:wR}. As a consequence, we also have $| u_{R_n}^{(k)}| \leq (1+\alpha) C_0 {\overline  u}_{R_n}$ and thus, in the limit, $| u^{(k)}| \leq (1+\alpha) C^0 {u}_\infty$. Since this holds for all $\alpha >0$, we also have $| u^{(k)}| \leq C_0 {u}_\infty$ and in the limit we obtain the announced result.
\end{proof}    

	%-----------------------
	\subsection{Regularity}
	%-----------------------
	
    We now prove our main regularity result, namely Proposition~\ref{prop:regularity of u}.
    
    \begin{proof}
    Let $u$ be the weak solution of \eqref{eq:FPSL lin} with initial data $u_0\in \Cc^0_c(\R)$. We set $w(t)=\frac{u(t)}{u_\infty}$.\\
    
    \noindent {\em Step 1. We show that $u, \,w \in L^2((0,T);H^1_{loc}(\R))$.} 
    For $R>x_1$, let $u_R(t)$ be the solution to the  truncated problem \eqref{eq:FPS trunc} with initial data $u_0$. As in the proof of Theorem \ref{th:existence weak sol}, using that $u_0\in \Cc^0_c(\R)$, we can find a uniform constant $C_0>0$ such that for all $R>x_1$, $|u_0| \leq C_0 \overline{u}_R$. That implies that $w_R(t) = \frac{u_R(t)}{\overline{u}_R}$ satisfies $|w_R(t) | \leq C_0$ and also $|N_R(t)| \leq C_0 $. 
    
    Choosing the convex function $H(z) = \frac{z^2}{2}$ in \eqref{eq:Entropie trunc} and  integrating, we get
        \[
        \int_0^\infty\int_\R \big(\partial_xw_R(t,x)\big)^2\overline{u}_R(x)\,dx\,dt\leq \int_\R \left(\frac{u_0(x)}{\overline{u}_R(x)}\right)^2\overline{u}_R(x)\,dx \leq C_0^2.
        \]
        As in the step 1 of the proof of Theorem \ref{th:existence weak sol}, and using uniqueness of the limit,  $w_{R}\rightharpoonup w$ as $R \rightarrow \infty$ in the $L^\infty$-w* topology. From the above inequality, for all $A>0$, we have 
        \[
        \int_0^\infty\int_A^A \big(\partial_x w_R(t,x)\big)^2\,dx\,dt\leq C_0^2 \min_{[-A,A]}(\overline{u}_R)^{-1}.
        \]
        As a consequence, \( \partial_x w_R \rightharpoonup \partial_x w \) weakly in $L^2 \big((0,T)\times(-A,A)\big)$. By weak-strong limit, since $\overline{u}_R\rightarrow u_\infty$ strongly in $\Cc^0(\R)$, we get
        \[
        \int_0^\infty\int_{-A}^A\big(\partial_x w(t,x)\big)^2 u_\infty(x)\,dx\,dt\leq \liminf_{R\rightarrow\infty}\int_0^\infty\int_{-A}^A \big(\partial_xw_R(t,x)\big)^2\overline{u}_R(x)\,dx\,dt\leq C_0^2.
        \]
        Because this is true for all $A>0$, we also have 
        \begin{equation}\label{eq: borne d_x w }
            \int_0^\infty\int_\R \big(\partial_x w(t,x)\big)^2 u_\infty(x)\,dx\,dt\leq C_0^2.
        \end{equation}
        Finally since $ u_\infty$ is smooth, and $w\in L^\infty(\R^+\times \R)$, we  obtain
        \[
        \frac{\big(\partial_xu(t,x)\big)^2}{u_\infty(x)} =\frac{\big(w(t,x)u_\infty'(x)+u_\infty(x)\partial_xw(t,x)\big)^2}{u_\infty(x)}\leq w(t,x)^2\frac{(u'_\infty(x))^2}{u_\infty(x)}+u_\infty(x)\big(\partial_xw(t,x)\big)^2.
        \]
        Using that $w\in L^\infty(\R^+\times \R)$ and $\frac{(u_\infty')^2}{u_\infty}\in L^1(\R)$ according to Proposition \ref{prop:u_stat} and the above inequality, we conclude that \eqref{eq:borne du/dx} holds true. 
        \\
       
\noindent {\em Step 2. The singularity $N(t) \delta (x)$.}	One of the major objections to regularity is the Dirac singularity in the right hand side of Eq.~\eqref{eq:FPSL lin}. To overcome it, we first introduce the solution to the  heat equation with a source
  \begin{equation}
     \left\{
    \begin{aligned}
        &\partial_t\theta = \partial_x^2 \theta +N(t)\delta_0,\\
     &\theta(t=0) = 0.
    \end{aligned}
     \right.
\end{equation}
            Since, as in step~1, we have $|N(t)| \leq C_0$,  we also have, for all $T>0$, 
    \begin{equation} \label{est:theta}
    \begin{cases}
           & \theta, \; \partial_x\theta \in L^\infty \big((0,T)\times \R \big), \qquad 
           \frac{ |\theta(x,t)|}{|x|} +|\partial_x\theta (x,t)| = o\big(\exp(-\frac{x^2}{4t})\big), \quad \frac{x^2}{t}\gg 1,
            \\[5pt]
           & \theta \in L^2\big((0,T);H^1(\R)\big).
    \end{cases}
    \end{equation}
    \
    Indeed, treating only the derivative, for $t\geq 0$, $x\in \R\setminus\{0\}$, we have
    \begin{equation} \label{eq:theta}
     \theta(t,x) = \int_0^t\frac{N(s)}{\sqrt{4\pi(t-s)}}\exp\big(-\frac{x^2}{4(t-s)}\big)ds, 
    \end{equation}
            \[
            \partial_x\theta(x,t) = -\frac{x}{4\sqrt{\pi}}\int_0^t\frac{N(s)}{(t-s)^{3/2}}\exp\big(-\frac{x^2}{4(t-s)}\big)ds,
            \]
            and we may write, with $\tau = t-s$, $z=\frac{x^2}{4\tau}$,
            \begin{align*}
                |\partial_x\theta(x,t)|
                &\leq \frac{C_0 |x|}{4\sqrt{\pi}} \int_0^t\tau^{-3/2}\exp\Big(-\frac{x^2}{4\tau}\Big)\,d\tau
                \leq \frac{C_0|x|}{4\sqrt{\pi}}\int_{\frac{x^2}{4t}}^\infty\Big(\frac{x^2}{4z}\Big)^{-3/2}\frac{x^2}{4z^2} e^{-z}dz
                \\
                &=\frac{C_0}{2\sqrt{\pi}} \int_{\frac{x^2}{4t}}^\infty z^{-1/2}e^{-z}\,dz \leq \frac{C_0}{2}.
            \end{align*}
            Hence, the $L^\infty$ estimate \eqref{est:theta} is proved. The $L^2$ estimate follows from this bound and the standard estimate for the heat equation.
 \\
 
\noindent {\em Step 3. We show that $u\in L^\infty \big((0,T);H^1_{loc}(\R) \big)$ when  $u_0\in \Cc^0_c(\R) \cap  H^1 (\R)$.}

            We may now remove the Dirac mass on the right hand side of Eq.~\eqref{eq:FPSL lin} and prove that 
            \begin{equation}\label{eq: q in L2}
                 q:= u- \theta \in L^2\big((0,T); H^1(\R)\big).
            \end{equation}
            Indeed, since $|u|\leq Cu_\infty \in L^2(\R)$, then $u\in L^2((0,T)\times \R)$. In addition, according to~\eqref{eq:borne du/dx}, 
            \[
            \int_0^T\int_\R\big(\partial_xu(t,x)\big)^2dx\,dt\leq \|u_\infty \|_\infty\int_0^T\int_\R\big(\partial_xu(t,x)\big)^2u_\infty^{-1}(x)\,dx\,dt<\infty.
            \]
            Hence, $u\in L^2\big((0,T); H^1(\R)\big)$ and thanks to~\eqref{est:theta}, we conclude.
            
            The function $q$ satisfies the equation
            \begin{equation}
            \left\{
            \begin{aligned}
                &\partial_tq = -\partial_x\big(hq\big)+\partial_x^2 q + \partial_x\big(h\theta\big), \\
                &q(t=0) = u_0.
            \end{aligned}
            \right.
            \end{equation}
            Differentiating this equation in $x$, we get
            \[
            \partial_t\big(\partial_xq\big)=-\partial_x^2\big(hq\big)+\partial_x^3q +\partial_x^2\big(h\theta\big).
            \]
            Let $\chi\in\testfunc(\R)$, $\chi\geq 0$ and define $I_\chi(t) = \frac{1}{2}\int_\R\chi(x)\big(\partial_xq(t,x)\big)^2dx$. Then, we compute
 \begin{align*}
                I'_\chi(t)
                &=\int_\R\chi\partial_xq\big( -\partial_x^2\big(hq\big)+\partial_x^3q +\partial_x^2(h\theta)\big)\,dx
                \\
                &=\int_\R \chi\partial_xq\big(-h''q-2h'\partial_xq -V\partial^2_xq\big)\,dx-\int_\R\big(\chi'\partial_xq +\chi\partial_x^2q\big)\big(\partial_x(h\theta) + \partial_x^2q\big)\,dx
                \\
                &=-\underbrace{\int_\R \Big[\chi h'' q(\partial_x q) -2 h'\chi\big(\partial_xq\big)^2
                -\frac 12 \chi h \partial_x (\partial_x q)^2 - \chi'(\partial_xq)\partial_x\big(h\theta\big) \big]\,dx}_{=I_1}
                \\
                &\quad -\underbrace{\int_\R\chi'(\partial_xq)(\partial_x^2q)\,dx}_{=I_2}
                -\underbrace{\int_\R\chi(\partial_x^2q)\partial_x(h\theta)\,dx}_{=I_3}-\underbrace{\int_\R\chi\big(\partial_x^2q\big)^2\,dx}_{=I_4} .
            \end{align*}
  Integrating by parts the third term of $I_1$ and using the Cauchy-Schwarz inequality for the fourth term, we obtain by immediate calculations for some constant $C$
   \[
   |I_1(t)| \leq C \int_\R [ q^2+(\partial_x q )^2]\,dx \big[ \|\chi h''\|_\infty + \|\chi h'\|_\infty +\|\chi' h\|_\infty+1 \big]+ \int_\R |\chi' \partial_x(h\theta)|^2.
   \]
   Thanks to the bounds \eqref{est:theta} and \eqref{eq: q in L2}, we conclude that for all $T>0$ there is a constant $C(T)$ such that
   \[
 \forall t\leq T, \qquad   |I_1(t)| \leq C(T).
   \]
   Next, the term $I_2$ can be estimated, for all $t\leq T$, as
   \[
   |I_2(t)|= \frac 12 |\int_\R \chi'\partial_x (\partial_x q)^2| =\frac 12 \int_\R \chi''(\partial_x q)^2 \,dx \leq C(T)
   \]
Finally, we use the Cauchy-Schwarz inequality to get, for all $t\leq T$,
\begin{align*}
 |I_3(t)| &\leq \frac 12 \int_\R \chi \big(\partial_x^2q\big)^2\,dx+ \frac 12 \int_\R \chi |\partial_x(h\theta)|^2\,dx
 \leq \frac 12 I_4(t) +C(t) 
\end{align*} 
These estimates show that for all $t\leq T$
 \begin{align*}
                I'_\chi(t) \leq C(T) - \frac 12 \int_\R \chi \big(\partial_x^2q\big)^2\,dx.
\end{align*}                 
Consequently we have $I_\chi(t)\leq C(T)+ I_\chi(0)$, and thus being given the regularity of $\theta$, we have obtained the desired regularity $u\in L^\infty \big((0,T);H^1_{loc}(\R) \big)$. 

Notice that we also deduce that  $q\in L^2 \big((0,T);H^2_{loc}(\R) \big)$.
\\

 \commentout{ 
            \begin{align*}
                I'_\chi(t)
                &=\int_\R\chi\partial_xq\big( -\partial_x^2\big(hq\big)+\partial_x^3q +\partial_x^2(h\theta)\big)\,dx\\
                &=\int_\R \chi\partial_xq\big(-h''q-2h'\partial_xq -V\partial^2_xq\big)\,dx-\int_\R\big(\chi'\partial_xq +\chi\partial_x^2q\big)\big(\partial_x^2q+\partial_x(h\theta)\big)\,dx\\
                &=-\underbrace{\int_\R\chi h'' q(\partial_x q)}_{=I_1} -2\underbrace{\int_\R h'\chi\big(\partial_xq\big)^2}_{=I_2}
                -\frac 12 \underbrace{\int_\R\chi h \partial_x (\partial_x q)^2\,dx}_{=I_3}\\
                &-\underbrace{\int_\R\chi'(\partial_xq)(\partial_x^2q)\,dx}_{=I_4} -\underbrace{\int_\R\chi'(\partial_xq)\partial_x\big(h\theta\big)\,dx}_{=I_5}-\underbrace{\int_\R\chi\big(\partial_x^2q\big)^2\,dx}_{=I_6}
                -\underbrace{\int_\R\chi(\partial_x^2q)\partial_x(h\theta)\,dx}_{=I_7}.
            \end{align*}
            In order to get the desired result, we need to show that either $I_i(t)\in L^1_{loc}(\R^+)$ either $I_i\geq 0$ for any $i\in \{1,..,7\}$.\\
            
            Then, for any $t\geq 0$, 
            \begin{align*}
                |I_1(t)| = \bigg|\int_\R\chi h'' q(\partial_x q)\,dx\bigg| &\leq 2\int_\R|\chi h''|\big(q^2+\big(\partial_xq\big)^2\big)\,dx .
            \end{align*}
            We can find $M\geq 0$ such that $|\chi h''|\leq M$, so we get
            \[
            |I_1(t)|\leq 2M\int_\R\big(q^2+\big(\partial_xq\big)^2\big)\,dx.
            \]
            According to \eqref{eq: q in L2}, then $I_1\in L^1_{loc}(\R^+)$.
            \\
            For the integral $I_2$, we use the same argument than above, and the fact that we can find $M'\geq 0$ such that $|\chi \times h'|\leq M'$. Hence, 
            \[
            |I_2(t)| = \bigg|\int_\R h'\chi\big(\partial_xq\big)^2\,dx\bigg|\leq M'\int_\R \big(\partial_xq\big)^2\,dx.
            \]
            This shows that $I_2\in L^1_{loc}(\R^+)$.
            Now, integrating by parts in $I_3$, we get
            \begin{align*}
                I_3(t) &=\int_\R\chi h(\partial_xq)(\partial_x^2q)\,dx = -\frac{1}{2}\int_\R\big(\chi h\big)'\big(\partial_xq\big)^2\,dx\\
                &=-\frac{1}{2}I_2(t) -\frac{1}{2}\int_\R\chi'h\big(\partial_xq\big)^2\,dx
            \end{align*}
            Again, using that $|\chi'h|\leq M$ , we get
            \[
            \fa t\in [0,T],\quad 
            |I_3(t)|\leq \frac{1}{2}|I_2(t)|+\frac{1}{2}M\int_\R \big(\partial_xq\big)^2\,dx .
            \]
            Therefore, $I_3\in L^1_{loc}(\R^+)$.
            \\
            
            Integrating by parts $I_4$, we get
            \[
            \fa t \geq 0, \quad I_4(t) =-\frac{1}{2}\int_\R\chi''\big(\partial_xq\big)^2\,dx.
            \]
            So, we also have
            \[
            |I_4(t)|\leq \frac{\|\chi''\|_\infty}{2}\int_\R\big(\partial_xq\big)^2\,dx
            \]
            and hence $I_4\in L^1_{loc}(\R^+)$.
            Again, 
            \begin{align*}
                |I_5(t)|=\bigg|\int_\R\chi'(\partial_xq)\partial_x\big(h\theta\big)\,dx\bigg|\leq 2\int_\R\chi'\big((\partial_xq)^2+(\partial_x(h\theta))^2\big)\,dx.
            \end{align*}
            Using the bound $L^2$ bounds on $\theta$ and $\partial_x\theta$, we deduce that $I_5\in L^1_{loc}(\R^+)$.
            The integral $I_6$ satisfies $\fa t \geq 0$, $I_6(t)\geq 0$, so we don't need to bound it.
            Finally, 
            \begin{align*}
                |I_7(t)| &= \bigg|\int_\R\chi(\partial_x^2q)\partial_x(h\theta)\,dx\bigg|
                \leq \frac{1}{2}\underbrace{\int_\R\chi\big(\partial_x^2q\big)^2\,dx}_{=I_6(t)}+2\underbrace{\int_\R\chi\partial_x(h\theta)\,dx}_{=\tilde{I}_7(t)}
            \end{align*}
            with $\Tilde{I}_7(t)\in L^1_{loc}(\R^+)$.
            Hence, 
            \begin{align*}
                I_\chi'(t) =\sum_{i=1}^7I_i(t)&\leq I_1+I_2+I_3+I_4+I_5-\frac{1}{2}I_6+\tilde{I}_7\\
                &\leq I_1+I_2+I_3+I_4+I_5+\tilde{I}_7.
            \end{align*}
            According to all the above estimation, $\big(I_\chi'\big)_+ = \max(0,I_\chi')\in L^1_{loc}(\R^+)$.\\
            Therefore, for any $t\geq 0$,
            \begin{align*}
                \int_\R\chi(x)\big(\partial_xq(t,x)\big)^2\,dx \leq\int_0^t\big(I_\chi'(\tau)\big)_+d\tau+ \int_\R\chi(x)\big(\partial_xu_0(x)\big)^2\,dx
            \end{align*}
            and so for any $T>0$, $q\in L^\infty([0,T];H^1_{loc}(\R))$. Using that $\partial_x\theta \in L^\infty_{loc}(\R^+\times \R)$ and that $u = q+\theta$, then we finally get that $u\in L^\infty([0,T];H^1_{loc}(\R))$ . 
}%endcommentout
            
            \noindent {\em Step 4. Local space and time continuity.} The regularity $u\in L^\infty(\R^+;H^1_{loc}(\R))$ ensures that for almost every $t\geq 0$, $u(t)$ is uniformly in $t$ locally H\"older continuous in $x$. Indeed, for all $t\leq T$, $|x|, \; |y| \leq R$
        \[
        |u(t,x)-u(t,y)| \leq \int_{z\in [x,y]} | \partial_x u (t,z)|\, dz \leq |x-y|^{\frac 12}
        \;\sup_{0\leq t \leq T} \int_{|x| \leq R} | \partial_x u (t,x)|^2\,dx.
        \]

            We turn to the H\"older estimate $O(h^{\frac 14})$ in time. Let $\phi\in \testfunc(\R)$ and $h>0$. We have, using the definition of weak solutions 
            \begin{align*}
                \int_\R\big(u(t+\eta)-u(t)\big)\phi\,dx &= \int_\R \int_t^{t+\eta} \partial_t u(\tau) \phi d\tau\,dx
                \\
                &=\int_t^{t+\eta}\bigg(\int_\R \big(h u(\tau)+\partial_xu(\tau)\big)\phi'\,dx+N_u(\tau)\phi(0)\bigg)\,d\tau.
            \end{align*}
          Then, we use the bounds $|u(t)|\leq C_0 u_\infty$, $|N(t)| \leq C_0 N_\infty$ and $\int_{-A}^A\big(\partial_xu(t)\big)^2dx \leq M_A$, to obtain a constant $C_A>0$ such that for all $ \phi\in \testfunc(\R)$ with supp$(\phi)\subset(-A,A)$,  we have for all $t\geq 0$, 
            \begin{align*}
                \bigg|\int_\R\big(u(t+\eta)-u(t)\big)\phi\,dx\bigg| \leq C_A\eta\big(\|\phi'\|_1+\|\phi'\|_2+\|\phi\|_\infty\big) = \alpha(A,\eta,\phi).
            \end{align*}
            Now, for $\varepsilon\in (0,1)$, we recall the regularization  $\ureg = u\ast \reg$ of Section~\ref{sec:unique} and write
            \begin{align*}
                |u(t+\eta,x)-u(t,x)|&\leq |u(t+\eta,x)-\ureg(t+\eta,x)|+|\ureg(t+\eta,x)-\ureg(t,x)|+|u(t,x)-\ureg(t,x)|.
            \end{align*}
            Then, for $x\in \R$, $T>0 $ and $ t\in (0,T)$, using the estimate \eqref{eq:borne du/dx}, we get
            \begin{align}
                |u(t,x)-\ureg(t,x)|&=\bigg|\int_{x-\eps}^{x+\eps}\big(u(t,x)-u(t,y)\big)\reg(x-y)\,dy\bigg|  \notag 
                \\
                & \leq \int_{x-\varepsilon}^{x+\varepsilon}\bigg|\int_x^y\partial_xu(t,z)dz\bigg|\,\reg(x-y)\,dy \notag 
                \\
                &\leq \int_{x-\eps}^{x+\eps}\bigg(\int_{(x,y)}
                \frac{\big(\partial_xu(t,z)\big)^2}{u_\infty(z)}dz \int_{(x,y)} u_\infty(z) \bigg)^{1/2} \reg(x-y)dy  \notag 
                \\
                &\leq \bigg(\sup_{t\in(0,T)}\int_{x-\eps}^{x+\eps}
                \frac{\big(\partial_xu(t,z)\big)^2}{u_\infty(z)} \, dz\bigg)^{1/2}\sqrt{\|u_\infty\|_\infty \varepsilon} \leq  C \sqrt{\varepsilon}. \label{est:errorConv}
            \end{align}
            The same estimate holds for $|\ureg(t+\eta,x)-u(t+\eta,x)|$. For the last term, we notice that for $\phi = \reg$, we have $\alpha(A,\eta,\reg) = \frac{C_A\eta}{\varepsilon^{3/2}}$.  We get, for any $\varepsilon\in (0,1)$, 
            \begin{align*}
                |u_\varepsilon(t+\eta,x)-u_\varepsilon(t,x)|&\leq 2C\sqrt{\varepsilon}+\alpha(A,\eta,\reg) \leq C'\big(\sqrt{\varepsilon} +\frac{\eta}{\varepsilon^{\frac 3 2}}\big).
            \end{align*}
            Therefore we have obtained the announced inequality 
            \begin{equation}\label{eq: u holder}
                \sup_{t\in (0,T)}\sup_{|x|\leq A}|u(t+\eta,x)-u(t,x)|\leq C'\inf_{\varepsilon\in (0,1)}\bigg\{\sqrt{\varepsilon}+\frac{\eta}{\varepsilon^{\frac 3 2}}\bigg\} =C(T,A) \eta^{1/4}.
            \end{equation}

        \noindent {\em Step 5. Global continuity.} On the one hand, for any $\varepsilon>0$, there is $A>0$ such that $\sup_{|x|>A}u_\infty\leq \frac{\varepsilon}{2C_0}$.
            %$\int_{|x|>A} u_\infty(x) \, dx \leq \frac{\eps}{C_0}$. 
            Therefore for any $\eta>0$, we have 
            \[
           \sup_{|x|\geq A}|u(t+\eta,x)-u(t,x)|\leq 2C_0 \sup_{|x|\geq A}u_\infty(x)\leq \varepsilon,
            \]
            %\[ \int_{|x|\geq A}|u(t+\eta,x)-u(t,x)|\,dx\leq 2C_0 \int_{|x|\geq A}u_\infty(x)\,dx\leq \varepsilon. \]
           On the other hand, fixing $A$ and using estimate \eqref{eq: u holder}, for $\eta \leq \frac{\eps}{C(T,A)}$, it holds for all $t\leq T$
            \[
            \sup_{|x|\leq A}  |u(t+\eta,x)-u(t,x)|\leq C(T,A)\eta \leq \eps.
            \]

            Altogether, we have obtained that 
            \(
            \sup_{x\in\R}|u(t+\eta,x)-u(t,x)|\leq \varepsilon.
        %    \quad \text{and}\quad \int_\R|u(t+\eta,x)-u(t,x)|\,dx\leq \varepsilon.
            \)
            This proves that 
            \begin{equation}\label{eq: conitnuité de u(t)}
                \lim_{\eta\rightarrow 0} \sup_{t\in (0,T)}\|u(t+\eta,\cdot) - u(t,\cdot)\|_{\infty} = 0.
                %\quad \text{and}\quad \lim_{h\rightarrow 0}\sup_{t\in (0,T)}\|u(t+\eta,\cdot) - u(t,\cdot)\|_{1} = 0.
            \end{equation}
            
            \noindent {\em Step 6.} We show that $u \in \Cc^0\big(\R^+;L^1(\R)\big)$. 

            Let $u_0\in L^1(\R)$ and $(u_0^{(n)})_{n\geq 0}\in \big(\Cc^0_c(\R)\big)^\N$ such that $\lim_{n\rightarrow\infty}\|u_0^{(n)}-u_0\|_1=0$. We denote $u^{(n)}(t)$ the solution toEq.~\eqref{eq:FPSL lin} associated to the data $u_0^{(n)}$. Then,  for any $h>0, t\geq 0$ and $n\in \N$, we have 
            \[
            \|u(t+\eta)-u(t)\|_1\leq \|u(t+\eta)-u^{(n)}(t+\eta)\|_1+\|u^{(n)}(t+\eta)-u^{(n)}(t)\|_1+\|u(t)-u^{(n)}(t)\|_1 .
            \]
            Now, using  the $L^1$ contraction property \eqref{eq:continuité S}, we find 
            \[
            \|u(t+\eta)-u(t)\|_1\leq2\|u_0-u_0^{(n)}\|_1+\|u^{(n)}(t+\eta)-u^{(n)}(t)\|_1, 
            \]
            and since $\lim_{\eta\rightarrow0}\|u^{(n)}(t+\eta)-u^{(n)}(t)\|_1=0$ according to \eqref{eq: conitnuité de u(t)}, we conclude
            \[
            \fa n\in \N, \qquad \limsup_{\eta\rightarrow0}\|u(t+\eta)-u(t)\|_1\leq 2\|u_0-u_0^{(n)}\|_1 \to 0 \quad \text{as} \quad n\to \infty,
            \]
           % Letting $n\rightarrow\infty$, we obtain
            %\[\lim_{\eta\rightarrow 0}\|u(t+\eta)-u(t)\|_1=0,\]
            and we have obtained that $t\mapsto u(t)\in \Cc^0(\R^+;L^1(\R))$.
         \end{proof}
         
\subsection{The strong form of the condition at infinity}

We now have the tools to prove that the boundary condition at infinity holds in a stronger form.

	\begin{prop}
	     Let $u_0\in \mathcal{C}^0_c(\R)$, $T>0$. Then, for $\phi \in \testfunc(\R^+)$ with supp$(\phi)\subset[0,T)$,
	     	\begin{equation}    \label{eq:SFCI}
	  \lim_{R\rightarrow\infty} \int_0^T\phi(t)[ h(R)u(t,R) - N_u(t)]\,dt = 0.
	  \end{equation}   
	   \end{prop}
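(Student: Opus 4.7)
The idea is to upgrade Lemma~\ref{cor:lim Vu}, a spatially averaged form of the boundary condition, to the pointwise-in-$R$ statement by studying the time-averaged quantity $V(x) := \int_0^T \phi(t) u(t,x)\,dt$ as a function of $x$. By Proposition~\ref{prop:MaxPrinc} and Proposition~\ref{prop:regularity of u}, $V$ is continuous, satisfies $|V(x)| \leq C u_\infty(x)$, belongs to $L^1\cap L^\infty(\R)$, and vanishes at $+\infty$. Testing the weak formulation~\eqref{eq:formulation faible FPS} against functions of the form $\phi(t)\psi(x)$ (with $\psi\in\testfunc(\R)$) and integrating by parts in $t$ (using $\text{supp}(\phi)\subset[0,T)$ to discard the boundary term at $t=T$) yields, in the sense of distributions on $\R$,
\begin{equation*}
\partial_x(hV) - V'' \,=\, G(x) \,+\, \delta_0(x)\int_0^T \phi(t) N_u(t)\,dt, \qquad G(x) := \phi(0) u_0(x) + \int_0^T \phi'(t) u(t,x)\,dt,
\end{equation*}
where $G\in L^1(\R)\cap L^\infty(\R)$ and $|G(x)| \leq C u_\infty(x)$ for $x$ beyond the support of $u_0$ and $\{0\}$.

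\textbf{First integration and identification of the limit.} Past $\text{supp}(u_0)\cup\{0\}$ the ODE reduces to $(hV-V')' = G$, which integrates to
\begin{equation*}
h(x)V(x) - V'(x) = L^* - \int_x^\infty G(y)\,dy, \qquad L^* := \lim_{x\to\infty} (hV - V')(x),
\end{equation*}
the limit existing because $G\in L^1(\R)$. To identify $L^*$, I invoke Lemma~\ref{cor:lim Vu} with time test-function $\phi$: it gives $\int_\R \chi(y-R)\, h(y)V(y)\,dy \to \int_0^T \phi N_u\,dt$, while integration by parts combined with dominated convergence (using $V(y)\to 0$) yields $\int_\R \chi(y-R) V'(y)\,dy = -\int_\R \chi'(y-R) V(y)\,dy \to 0$. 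Since $(hV-V')(y)\to L^*$ pointwise and stays bounded, these facts force $L^* = \int_0^T \phi(t) N_u(t)\,dt$.

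\textbf{Asymptotic profile of $V$ and conclusion.} The previous identity rewrites as the first-order linear ODE $V' - hV = -L^* + r(x)$ with $r(x) := \int_x^\infty G(y)\,dy \to 0$. Multiplying by the integrating factor $e^{-\mathcal{H}(x)}$ and using $V(x)e^{-\mathcal{H}(x)}\to 0$ as $x\to+\infty$, one integrates to obtain
\begin{equation*}
V(x) \,=\, L^*\,e^{\mathcal{H}(x)}\!\!\int_x^\infty e^{-\mathcal{H}(y)}\,dy \;-\; e^{\mathcal{H}(x)}\!\!\int_x^\infty r(y)\,e^{-\mathcal{H}(y)}\,dy \,=\, L^*\,\frac{u_\infty(x)}{N_\infty} + \varepsilon(x),
\end{equation*}
by the explicit formula~\eqref{eq:uinfty_formula}, with $|\varepsilon(x)|\leq \sup_{y\geq x}|r(y)|\cdot u_\infty(x)/N_\infty = o(u_\infty(x))$. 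Multiplying by $h(x)$ and using $h(x)u_\infty(x)\to N_\infty$ from~\eqref{eq:  u stat +infini}, I deduce
\begin{equation*}
\int_0^T \phi(t)\,h(R)u(t,R)\,dt = h(R)V(R) \longrightarrow L^* = \int_0^T \phi(t)N_u(t)\,dt \qquad \text{as } R\to\infty,
\end{equation*}
which is~\eqref{eq:SFCI}. The main subtlety lies in the rigorous interpretation of the ODE on $(x_0,\infty)$, where $V''$ is only a distribution a priori; but since $\partial_x(hV) - G \in L^1_{\rm loc}$, the quantity $hV-V'$ is absolutely continuous on this interval, so the successive manipulations with the integrating factor $e^{-\mathcal{H}}$ are classical and mirror the construction of $u_\infty$ in Proposition~\ref{prop:u_stat}.
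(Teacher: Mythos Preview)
Your proof is correct and takes a genuinely different route from the paper. The paper regularizes in space, splitting $\int_0^T\phi\big(h(R)u(\cdot,R)-N_u\big)\,dt$ into three pieces via $u_\varepsilon=u*\omega_\varepsilon$, and controls each through the dissipation bound~\eqref{eq:borne du/dx} on $\int_0^T\!\int(\partial_x u)^2/u_\infty$; this forces a delicate coupling $\varepsilon_R\sim 1/h(R)$ between the mollification scale and the growth of $h$. You instead time-average first: $V(x)=\int_0^T\phi(t)u(t,x)\,dt$ obeys the same second-order ODE as the stationary state up to the integrable source $G$, and you re-run the integrating-factor computation of Proposition~\ref{prop:u_stat} to read off $h(R)V(R)\to L^*$. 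This is essentially a $\phi$-weighted version of the argument for $U_T$ in Step~4 of Section~\ref{sec:proof1}, and it is strictly lighter: only the maximum principle $|u|\le C_0 u_\infty$ from Proposition~\ref{prop:MaxPrinc} is needed, not the $H^1$ estimate. Two minor remarks. First, your appeal to Proposition~\ref{prop:regularity of u} for the continuity of $V$ is unnecessary: from $(hV-V')'=G\in L^1_{\rm loc}$ one gets $hV-V'$ absolutely continuous, and since $V\in L^\infty$ this bootstraps to $V\in C^1$ on $(a,\infty)$. Second, the identification of $L^*$ can bypass Lemma~\ref{cor:lim Vu} altogether: integrating the full distributional ODE from $-\infty$ to $x$ and letting $x\to\infty$ gives $L^*=\int_\R G+\int_0^T\phi N_u$, and mass conservation yields $\int_\R G=\phi(0)\int u_0+\int_0^T\phi'(t)\int u(t)\,dt=\phi(0)+\int_0^T\phi'=0$.
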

	    	In other words, in a weak sense in $t$, 
	    \(
	    \lim_{R\rightarrow\infty}h(R)u(t,R) = N_u(t).
	    \)

	\begin{proof}
	    With the notation of the proposition, $R>x_1$ and $\varepsilon\in (0,1)$, we use again $\ureg(t,x) = u(t)\ast\reg(x)$. We decompose the integral as follows
	    \begin{align*}
	        \int_0^T&\phi(t)\big(h(R)u(t,R) - N_u(t)\big)\,dt =I_1+I_2+I_3
	        \\
	        &=\int_0^T\phi(t) h(R) \big(u(t,R)-\ureg(t,R)\big)\,dt+\int_0^T\phi(t)\Big(h(R)\ureg(t,R)-\int_\R h(x)\reg(x-R)u(t,x)\,dx \Big)\,dt
	        \\
	        & \quad +\int_0^T\phi(t)\Big(\int_\R h(x)\reg(x-R)u(t,x)\,dx -N_u(t)\Big)\,dt.
	    \end{align*}
	    
	    To estimate $I_1$, we use the method for \eqref{est:errorConv} with $x=R$ and get
	    \begin{align*}
            |u(t,R) - u_\varepsilon(t,R)|
            &\le 
                \left( \int_{R-\varepsilon}^{R+\varepsilon} 
                    \frac{(\partial_y u(t,y))^2}{u_\infty(y)}\,dy \right)^{1/2}
                \left( \int_{R-\varepsilon}^{R+\varepsilon} 
                    u_\infty(y)\,dy \right)^{1/2}
                \int_{R-\varepsilon}^{R+\varepsilon} 
                    \reg(x - R)\,dx\\
            &\le \left( \int_{R-\varepsilon}^{R+\varepsilon} 
                    \frac{(\partial_y u(t,y))^2}{u_\infty(y)}\,dy \right)^{1/2}
                \big(2\varepsilon u_\infty(R-\varepsilon)\big)^{1/2},
        \end{align*}
        where we used that, according to Proposition \ref{prop:u_stat}, $u_\infty$ is non-increasing on $(x_1,\infty)$.
        %and thus we have $\sup_{|x-R|\leq\varepsilon}u_\infty(x)=u_\infty(R-\varepsilon)$.
    
        Also, applying the Cauchy-Schwarz inequality, we get
        \begin{align*}
            \int_0^T|\phi(t)|\left(\int_{R-\varepsilon}^{R+\varepsilon}\frac{\big(\partial_x u(t,x)\big)^2}{u_\infty(x)}\,dx\right)^{1/2}\,dt
            & \leq \|\phi\|_2\underbrace{\left(\int_0^T\int_{R-1}^{R+1}\frac{\big(\partial_x u(t,x)\big)^2}{u_\infty(x)}\,dx\,dt\right)^{1/2}}_{=:\delta(R)}.
        \end{align*}
        These give, since $u_\infty(x)$ is decreasing for $x$ large enough, 
        \begin{align*}
            \left|I_1 \right|&\leq C_\phi h(R)\big(\varepsilon u_\infty(R-\varepsilon) \big)^{1/2}\delta(R).
        \end{align*}
        
	    Next, we address the second term. We have
        \begin{align*}
	        \Big|h(R)\ureg(t,R)&-\int_\R h(x)\reg(x-R)u(t,x)\,dx\Big| = \Big|\int_{R-\varepsilon}^{R+\varepsilon}\big(h(x)-h(R)\big)\reg(x-R)u(t,x)\,dx\Big|
    \\
    &\leq \sup_{y\in [R-\varepsilon, R+\varepsilon]} h'(y) \int_{R-\varepsilon}^{R+\varepsilon}|x-R|\reg(x-R)|u(t,x)|\,dx
    \\
            &\leq  C \varepsilon \sup_{y\in [R-\varepsilon, R+\varepsilon]} h'(y) u_\infty(R-\varepsilon),
        \end{align*}
where we used the monotonicity of \(u_\infty\) for $x$ large. Therefore, we obtain 
\[
I_2 \leq C_\phi \varepsilon \sup_{y\in [R-\varepsilon, R+\varepsilon]} h'(y) \;  u_\infty(R-\varepsilon).
\]

	    Finally, for the third term, we consider the test function $\rho_R^\varepsilon(x) = \int_x^\infty\reg(y-R)dy$ in  Eq.~\eqref{eq:egality mass conservation}. Assuming that $R>1$ and $\varepsilon\in (0,1)$, then, $\rho_R^\varepsilon(0)=1$. We obtain 
	    \begin{align}
            \int_0^T\bigg\{-\phi'(t)\int_\R u(t,x)\rho_R^\varepsilon(x)\,dx 
    - \phi(t) & \int_\R u(t,x)\big(h(x)(\rho_R^\varepsilon) '(x) + (\rho_R^\varepsilon)''(x)\big)\,dx\bigg\}\,dt  \notag
    \\
    &= \int_0^T N_u(t)\phi(t)\,dt + \phi(0)\int_\R u_0(x)\rho_R^\varepsilon(x)\,dx \label{est:SCI_I3}.
        \end{align}
        
        To treat this term, we make two observations. Firstly, the mass conservation \eqref{eq:Conservation mass} for $u$ gives
        \begin{align*}
            \int_0^T\phi'(t)\int_\R u(t,x)\rho_R^\varepsilon(x)\,dx +\phi(0)\int_\R u_0(x)\rho_R^\varepsilon(x)\,dx 
            & =\int_0^T\phi'(t)\int_\R u(t,x)(\rho_R^\varepsilon(x)-1)\,dx\\ 
            & +\phi(0)\int_\R u_0(x)(\rho_R^\varepsilon(x)-1)\,dx.
        \end{align*}
        Because $\rho_R^\varepsilon(x)-1 = 0$ for any $x\leq R-\varepsilon$ and  since for all $t\geq 0$, $|u(t)|\leq C_0 u_\infty$ we conclude
        \begin{align*}
            \left|\int_0^T\phi'(t)\int_\R u(t,x)\rho_R^\varepsilon(x)\,dx +\phi(0)\int_\R u_0(x)\rho_R^\varepsilon(x)\,dx \right|
            &\leq C_0 \big(\|\phi\|_\infty+\|\phi'\|_1\big)\underbrace{\int_{R-1}^\infty u_\infty(x)\,dx}_{=:\beta(R)} .
        \end{align*}
        Secondly, arguing as before for $u_\infty$, we have for any $t\geq 0$,
        \begin{align*}
            \left|\int_\R u(t,x)(\rho_R^\varepsilon)''(x)\,dx\right| &
            = \left|\int_\R\partial_x u(t,x)(\rho_R^\varepsilon)'(x)\,dx\right|  = \left|\int_{R-\varepsilon}^{R+\varepsilon}\partial_x u(t,x)\reg(x-R)\,dx\right|
            %\\& \leq \left(\int_{R-\varepsilon}^{R+\varepsilon}\frac{\big(\partial_x u(t,x)\big)^2}{u_\infty(x)}\,dx\right)^{1/2}\left(\int_{R-\varepsilon}^{R+\varepsilon}\reg(x-R)^2 u_\infty(x)\,dx\right)^{1/2}
            %\\& \leq \left(\int_{R-\varepsilon}^{R+\varepsilon}\frac{\big(\partial_x u(t,x)\big)^2}{u_\infty(x)}\,dx\right)^{1/2}\big(\sup_{|x-R|\leq\varepsilon}u_\infty(x)\big)^{1/2}\underbrace{\left(\int_{R-\varepsilon}^{R+\varepsilon}\reg(x-R)^2dx\right)^{1/2}}_{=\frac{K}{\sqrt{\varepsilon}}}
            \\
             & \leq \left(\int_{R-\varepsilon}^{R+\varepsilon}\frac{\big(\partial_x u(t,x)\big)^2}{u_\infty(x)}\,dx\right)^{1/2}u_\infty(R-\varepsilon)^{1/2}\underbrace{\left(\int_{R-\varepsilon}^{R+\varepsilon}\reg(x-R)^2dx\right)^{1/2}}_{=\frac{C}{\sqrt{\varepsilon}}}.
        \end{align*}
        
        So we get for any $T>0$, and arguing as in the first term,
        \begin{align*}
            \left|\int_0^T \phi(t)
                \int_\R u(t,x)(\rho_R^\varepsilon)''(x)\,dx\,dt\right|
                &\leq 
                C \left(\frac{u_\infty(R-\varepsilon)}{\varepsilon}\right)^{1/2}
                \int_0^T |\phi(t)|
                \left(\int_{R-\varepsilon}^{R+\varepsilon}
                    \frac{(\partial_x u(t,x))^2}{u_\infty(x)}\,dx\right)^{1/2} dt
                \\
            &\leq C_\phi \left(\frac{u_\infty(R-\varepsilon)}{\varepsilon}\right)^{1/2}  \delta(R).
        \end{align*}
        
        Using the two estimates, we obtain from \eqref{est:SCI_I3}
        \begin{align*}
          |I_3|& =  \left|\int_0^T\phi(t)\big(\int_\R u(t,x)h(x)(\rho_R^\varepsilon)'(x)\,dx -N_u(t)\big)\,dt\right| 
          \\
          & = \bigg|\int_0^T\phi'(t)\int_\R u(t,x)\rho_R^\varepsilon(x)\,dx +\phi(0)\int_\R u_0(x)\rho_R^\varepsilon(x)\,dx +\int_\R u(t,x)(\rho_R^\varepsilon)''(x)\,dx\bigg|
        \\
            &\leq C_\phi\bigg(\beta(R)+\bigg(\frac{u_\infty(R-\varepsilon)}{\varepsilon}\bigg)^{1/2}\delta(R)\bigg).
        \end{align*}
        By gathering the three terms $I_1, \; I_2,\; I_3$, we finally obtain,
\begin{align}
    \Big|\int_0^T& \phi(t)\big(h(R)u(t,R) - N_u(t)\big)\,dt\Big| \notag
%    &\leq \tilde{C}_\phi\Big(h(R)
%        (\varepsilon u_\infty(R-\varepsilon)))^{1/2}\delta(R,\varepsilon)
%        + \varepsilon h'(R+\varepsilon)u_\infty(R-\varepsilon)
%        \\  &+ \beta(R)  + \big(\frac{u_\infty(R-\varepsilon)}{\varepsilon}\big)^{1/2}\delta(R,\varepsilon) \Big)
\\
    & \leq {C}_\phi\Big(
        u_\infty(R-\varepsilon)^{1/2}\delta(R)\big(h(R)\varepsilon^{1/2} + \varepsilon^{-1/2} \big)
        + \varepsilon \sup_{y\in [R-\varepsilon,R+\varepsilon]}h'(y) \; u_\infty(R-\varepsilon) + \beta(R) \Big).  \label{SCI:total}
\end{align}
% NEW VERSION
\\
The difficulty in estimating these terms is that  we have to finely tune  the ways  $\varepsilon \to 0$ and $R\to \infty$. To do that we have first to recall that, according to \eqref{eq:limh'/h^2},
\[
\omega(R):= \sup_{y\geq R}\frac{h'(y)}{h(y)^2} \to 0, \qquad \text{as} \quad R \to \infty.
\]
Now, we choose uniquely $\varepsilon= \varepsilon_R$ by the relation (notice that $\varepsilon \mapsto \frac{1}{h(R+\varepsilon)}$ is decreasing)
\[
\varepsilon_R = \frac{1}{h(R+\varepsilon_R)} \leq \frac{1}{h(R)}.
\]
Consequently, we also have, for $R$ large enough
\begin{equation} \label{est:epsR}
\varepsilon_R^{-1} u_\infty(R-\varepsilon_R) \leq 4 N_\infty.
\end{equation}
Indeed, we compute
\begin{align*}
 h(R+\varepsilon_R) u_\infty(R-\varepsilon_R)
&\leq  u_\infty(R-\varepsilon_R) \big[ h(R-\varepsilon_R)+2 \varepsilon_R  \sup_{y\in[R-\varepsilon_R, R+\varepsilon_R]} h'(y)\big] 
\\
&\leq  u_\infty(R-\varepsilon_R)  \big[ h(R-\varepsilon_R)+2 \varepsilon_R \omega(R-\varepsilon_R) h(R+\varepsilon_R)^2 \big]
\\
&=  u_\infty(R-\varepsilon_R) h(R-\varepsilon_R)+2  \omega(R-\varepsilon_R) h(R+\varepsilon_R) u_\infty(R-\varepsilon_R) \big.
\end{align*}
Consequently, choosing $\omega(R-\varepsilon_R) \leq \frac 14 $, since $u_\infty(x) h(x) \to N_\infty$ as $x \to \infty$, we have
\[
\frac 1 2 \varepsilon_R^{-1} u_\infty(R-\varepsilon_R) \leq h(R+\varepsilon_R) u_\infty(R-\varepsilon_R) (1-2  \omega(R-\varepsilon_R)) \leq u_\infty(R-\varepsilon_R) h(R-\varepsilon_R) \leq 2 N_\infty,
\]
still for $R$ large enough, and \eqref{est:epsR} is proved.
\\

We may now show that, with this choice of $\varepsilon_R$ the right hand side of \eqref {SCI:total} converges to $0$ as $R \to \infty$.

The first term is estimated as 
\[
u_\infty(R-\varepsilon_R)^{1/2}\delta(R)h(R)\varepsilon^{1/2}
\leq u_\infty(R-\varepsilon_R)^{1/2}\varepsilon^{-1/2}\delta(R) \leq 4 N_\infty \delta(R) \to 0.
\]

The second term is 
\begin{align*}
\varepsilon^{-1/2} u_\infty(R-\varepsilon_R)^{1/2}\delta(R)\leq 4 N_\infty \delta(R) \to 0.
\end{align*}

The next term we consider is, arguing as above, 
\begin{align*}
\varepsilon_R \sup_{y\in [R-\varepsilon_R,R+\varepsilon_R]}h'(y) \; u_\infty(R-\varepsilon_R)
&\leq \varepsilon_R  \omega(R-\varepsilon_R)  h(R+\varepsilon_R )^2 u_\infty(R-\varepsilon_R ) \leq  4 N_\infty \omega(R-\varepsilon_R).
\end{align*}

And finally, $\beta(R)$ converges to $0$ because $u_\infty$ is integrable.
\\

Therefore, we have proved the limit in \eqref{eq:SFCI}.
	\end{proof}

\subsection{Relative entropy relation}

As usual for all linear equations preserving positivity, \cite{RefMMP}, Eq.~\eqref{eq:FPSL lin} comes with a family of relative entropies defined for	$H \in \Cc^1(\R; \R)$ by
\begin{align}\label{def:entropy}
\Ee_H(t) := \int_\R H(w(t,x))u_\infty(x)\,dx, \qquad w(t) = \frac{u(t)}{u_\infty}.
\end{align}
The statements depend on the level of regularity of the solution.

\begin{theorem} [Relative entropy equality] \label{th:entropy}
		 Let $H \in \Cc^1(\R)$ be a convex function and $u_0$ an initial data such that $H(\frac{u_0}{u_\infty})u_\infty \in L^1(\R)$. Then  the entropy function  satisfies in the weak sense
		\[
		\fa t\geq 0, \qquad \frac{d}{dt}\Ee_H(t)\leq 0.
		\]
		Moreover, if $|u_0|\leq C_0 u_\infty$ as in Proposition~\ref{prop:MaxPrinc}, then, the entropy satisfies the equality 
		\begin{align}\label{ineq:entropy}
			\fa t\geq 0, \qquad \frac{d}{dt} &\Ee_H(t)= JD_H(t)
			-\int_\R\big(\partial_x w(t,x)\big)^2H''(w(t,x))u_\infty(x)\,dx\leq 0
		\end{align}
where the jump  dissipation $JD_H(t)$ is defined, for any $\chi \in \testfunc(\R)$ such that $\int_\R\chi(x)\,dx = 1$, by
\[
\frac{JD_H(t)}{N_\infty} =- \lim_{R\rightarrow\infty}\int_\R\chi(x-R)H\Big(w(t,x)\Big)\,dx-H\Big(w(t,0)\Big)-H'\Big(w(t,0)\Big)\Big(\frac{N_u(t)}{N_\infty}-w(t,0)\Big)  \leq 0.
\]
	\end{theorem}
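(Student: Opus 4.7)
I would first establish the equality under the stronger hypothesis $|u_0|\leq C_0 u_\infty$, where both the maximum principle (Proposition~\ref{prop:MaxPrinc}) and the regularity of Proposition~\ref{prop:regularity of u} are available, and then deduce the general inequality by approximation using the $L^1$-contraction \eqref{eq:L1Contract}.

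\emph{Identity under $|u_0|\leq C_0 u_\infty$.} Setting $w = u/u_\infty$, I derive the equation for $w$ by substituting $u = w u_\infty$ in \eqref{eq:FPSL lin} and subtracting $w$ times the stationary identity $(h u_\infty)' - u_\infty'' = N_\infty\delta_0$, obtaining
\begin{equation*}
u_\infty\partial_t w + (h u_\infty - 2 u_\infty')\partial_x w - u_\infty\partial_x^2 w = \bigl(N_u(t) - w(t,0)\,N_\infty\bigr)\delta_0.
\end{equation*}
Multiplying formally by $H'(w)$ and integrating in $x$: the Dirac contributes $H'(w(t,0))(N_u(t) - w(t,0) N_\infty)$; the diffusion becomes $-\int H''(w)(\partial_x w)^2 u_\infty\,dx$ after integration by parts; and the remaining drift term, combined with the leftover from the diffusion integration by parts and simplified via $h u_\infty - u_\infty' = N_\infty\mathbf{1}_{[0,\infty)}$ from \eqref{eq:ustatprime}, collapses to $N_\infty\bigl[H(w(t,0)) - H(w(t,+\infty))\bigr]$. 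Identifying $w(t,+\infty) = N_u(t)/N_\infty$ via the asymptotic \eqref{eq:  u stat +infini} and the strong boundary condition \eqref{eq:SFCI} produces exactly the $JD_H$ expression. To rigorize this, I would regularize $u_\varepsilon = u*\omega_\varepsilon$ as in Section~\ref{sec:unique}, apply the formal computation to $w_\varepsilon = u_\varepsilon/u_\infty$ tested against a cut-off $\rho_R$ of the type \eqref{def:rhoR}, pass $\varepsilon\to 0$ using the commutator estimate \eqref{cvg:Remainder} together with the uniform bound $|w_\varepsilon|\leq C_0$ inherited from the maximum principle, and finally pass $R\to\infty$ by an argument parallel to the proof of \eqref{eq:SFCI}.

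\emph{General inequality.} For $H$ merely convex $\mathcal{C}^1$ with $H(u_0/u_\infty) u_\infty \in L^1(\R)$, I would approximate $u_0$ by the truncations $u_0^{(k)} = \max(-k u_\infty,\min(k u_\infty,u_0))$, which satisfy $|u_0^{(k)}|\leq k u_\infty$, $u_0^{(k)}\to u_0$ in $L^1$, and $H(u_0^{(k)}/u_\infty)u_\infty \to H(u_0/u_\infty) u_\infty$ in $L^1$ by dominated convergence. The equality just obtained applied to $u^{(k)}$ gives $\mathcal{E}_H(t;u^{(k)}) \leq \mathcal{E}_H(0;u^{(k)})$. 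Using $u^{(k)}(t)\to u(t)$ in $L^1$ from \eqref{eq:L1Contract}, extracting an a.e. convergent subsequence and applying Fatou's lemma, combined with the convex lower bound $H(w)\geq H(0) + H'(0)\,w$ to ensure the required nonnegativity after subtraction, passes the inequality to the limit and yields $\mathcal{E}_H(t;u)\leq \mathcal{E}_H(0;u)$, i.e.\ $\tfrac{d}{dt}\mathcal{E}_H\leq 0$ in the weak sense.

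\emph{Main obstacle.} The delicate step is the passage $R\to\infty$ of the boundary term in the first part: identifying $\lim_R \int \chi(x-R) H(w(t,x))\,dx$, in the weak-in-$t$ sense against $\phi\in\testfunc(\R^+)$, with $H(N_u(t)/N_\infty)$. The strong boundary condition \eqref{eq:SFCI} controls only the linear quantity $h(x)u(t,x)$, whereas $H(w(t,\cdot))$ is nonlinear and $w$ is not known pointwise at infinity. I would close this gap by using the global bound $|w|\leq C_0$ (so $H$ is uniformly continuous on the range of $w$), the asymptotic $u_\infty h \to N_\infty$ from \eqref{eq:  u stat +infini}, and the weighted energy bound \eqref{eq:borne du/dx}, through a Cauchy--Schwarz argument closely analogous to the one establishing \eqref{eq:SFCI} itself.
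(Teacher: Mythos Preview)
Your overall architecture matches the paper's: derive the equation for $w$, multiply by $H'(w)$ against a cut-off $\rho_R$, integrate by parts using $h u_\infty - u_\infty' = N_\infty \mathbf 1_{[0,\infty)}$, send $R\to\infty$, then extend to general data by approximation. The paper, however, does \emph{not} use the convolution regularization $u_\varepsilon$ here; it works directly on $w$ using the continuity and $H^1_{loc}$ regularity from Proposition~\ref{prop:regularity of u}, which is enough to make the computation of $I_R'(t)$ legitimate. Your extra $\varepsilon$-layer is harmless but unnecessary.

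The genuine gap is in what you call the ``main obstacle''. You try to identify $\lim_{R\to\infty}\int \chi(x-R)H(w(t,x))\,dx$ with $H(N_u(t)/N_\infty)$, and you propose to do this by upgrading \eqref{eq:SFCI} via uniform continuity of $H$ on $[-C_0,C_0]$. But \eqref{eq:SFCI} (and \eqref{BoundaryCond}) only give \emph{weak-in-$t$} convergence of $w(t,R)$ to $N_u(t)/N_\infty$, and weak convergence does not commute with the nonlinear map $H$; the Cauchy--Schwarz argument behind \eqref{eq:SFCI} does not produce pointwise-in-$t$ convergence of $w(t,R)$, since $u_\infty^{-1}\sim h/N_\infty$ is not integrable near $+\infty$. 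More importantly, this identification is \emph{not needed}: reread the statement---$JD_H$ is \emph{defined} using the limit $\lim_R\int\chi(x-R)H(w)\,dx$, not using $H(N_u/N_\infty)$. The cut-off computation delivers exactly $-N_\infty\int\chi(x-R)H(w)\,dx$ as the boundary term (since $\rho_R' = -\chi(\cdot-R)$), so the equality \eqref{ineq:entropy} holds by definition once you pass $R\to\infty$. The only remaining task is $JD_H\leq 0$, and this is one line of Jensen: $\int\chi(x-R)H(w)\,dx\geq H\!\big(\int\chi(x-R)w\,dx\big)\to H(N_u/N_\infty)$ via \eqref{BoundaryCond}, and then convexity of $H$ gives the sign. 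This is the content of the Remark following the theorem.

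For the general inequality, the paper's approximation differs from yours in two respects. First, it approximates $u_0$ by $u_0^{(n)}\in\Cc^1_c(\R)$ (so that the regularity results apply), not by the truncations $\max(-ku_\infty,\min(ku_\infty,u_0))$, which need not lie in $\Cc^0_c$. Second, instead of Fatou, the paper first assumes $H'\in L^\infty$ so that $|H(w^{(n)})-H(w)|u_\infty \leq \|H'\|_\infty|u^{(n)}-u|$ and the $L^1$-contraction passes the limit directly; it then removes this assumption by approximating a general convex $H$ from below by Lipschitz convex $H_k$. Your Fatou route is plausible but would need the extra care of ensuring the regularity step applies to your chosen approximants.
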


\begin{remark}
    \begin{enumerate}
    \item Notice that, according to \eqref{BoundaryCond}, we have 
    \[
    \lim_{R\rightarrow \infty}\int_\R \chi(x-R)H\Big(w(t,x)\Big)dx\geq H\Big(\frac{N(t)}{N_\infty}\Big).
    \]
    Therefore, since $H$ is convex, $JD_H$ satisfies
    \[
    \frac{JD_H(t)}{N_\infty}\leq -\bigg(H\Big(\frac{N(t)}{N_\infty}\Big)-H\Big(w(t,0)\Big)-H'\Big(w(t,0)\Big)\Big(\frac{N_u(t)}{N_\infty}-w(t,0)\Big)\bigg)\leq 0.
    \]
        \item We give a proof through based on direct computations on the solution to \eqref{eq:FPSL lin}. A more direct, yet equally complex, approach involves passing to the limit in the entropy equality associated with the truncated problem \eqref{eq:Entropie trunc}. The moment estimate for the solution obtained via equality \eqref{eq: U_T} allows for the control of terms as $R \rightarrow \infty$.
    \end{enumerate}
\end{remark}

	\begin{proof}
	Let $u_0 \in \mathcal{C}^1_c(\mathbb{R})$. By Proposition \ref{prop:regularity of u}, $u$ is continuous on $\mathbb{R}^+ \times \mathbb{R}$, which implies that the trace $t \mapsto w(t,0)$ is well-defined. Notice in addition that according to Proposition \ref{prop:MaxPrinc}, $w$ satisfies $w\in L^\infty(\R^+\times \R)$.
    Using Eq. \eqref{eq:FPSL lin}, $w$ satisfies the following equation
    \begin{align*}
        \partial_t w &= \frac{1}{u_\infty} \left( \partial^2_x(w u_\infty) - \partial_x(h w u_\infty) + N_u \delta_0 \right) \\
 %       &= \frac{1}{u_\infty} \left( w u''_\infty + 2 u'_\infty \partial_x w + u_\infty \partial_x^2 w - \partial_x(h u_\infty) w - h u_\infty \partial_x w + N_u \delta_0 \right) \\
        &= \partial_x^2 w + \partial_x w \left( 2 \frac{u'_\infty}{u_\infty} - h \right) + \frac{w}{u_\infty} \underbrace{(u''_\infty - \partial_x(h u_\infty))}_{= -N_\infty \delta_0} + \frac{N_u}{u_\infty} \delta_0 \\
        &= \partial_x^2 w + \partial_x w \left( 2 \frac{u'_\infty}{u_\infty} - h \right) + \left( \frac{N_u}{u_\infty} - N_\infty \frac{w}{u_\infty} \right) \delta_0.
    \end{align*}
    Let $R > 0$ and let $\chi$ be defined as in Theorem \ref{th:entropy}. We introduce the cut-off function $\rho_R(x) = \int_x^\infty \chi(y-R) \, dy$.
    Define the truncated entropy functional by
    \[
        I_R(t) = \int_{\mathbb{R}} \rho_R(x) H(w(t,x)) u_\infty(x) \, dx.
    \]
    Differentiating with respect to time for $t \geq 0$, we obtain:
    \begin{align*}
        I'_R(t) &= \int_{\mathbb{R}} \rho_R(x) \left\{ \partial_x^2 w + \partial_x w \left( 2 \frac{u'_\infty}{u_\infty} - h \right) + \left( \frac{N_u}{u_\infty} - N_\infty \frac{w}{u_\infty} \right) \delta_0 \right\} H'(w) u_\infty \, dx.
    \end{align*}
    Handling the Dirac term at $x=0$,  the expression becomes
    \begin{equation} \label{eq:IR_deriv_interm}
        \begin{aligned}
            I'_R(t) &= - \rho_R(0) N_\infty H'(w(t,0)) \left( w(t,0) - \frac{N_u(t)}{N_\infty} \right) \\
            &\quad + \int_{\mathbb{R}} \rho_R(x) \left\{ \partial_x^2 w u_\infty + \partial_x w (2 u'_\infty - h u_\infty) \right\} H'(w) \, dx.
        \end{aligned}
    \end{equation}
  Next we use the identity $H'(w) \partial_x^2 w = \partial_x^2 H(w) - (\partial_x w)^2 H''(w)$ and integrating by parts, the term involving $\partial_x^2 H(w)$ becomes
    \begin{align*}
        \int_{\mathbb{R}} \rho_R u_\infty \partial_x^2 H(w) \, dx
%        &= - \int_{\mathbb{R}} \partial_x(\rho_R u_\infty) \partial_x H(w) \, dx \\
        &= - \int_{\mathbb{R}} \rho'_R u_\infty \partial_x w H'(w) \, dx - \int_{\mathbb{R}} \rho_R u'_\infty \partial_x w H'(w) \, dx.
    \end{align*}
    Substituting this back into \eqref{eq:IR_deriv_interm}, we get
    \begin{align*}
        I'_R(t) &= - \rho_R(0) N_\infty H'(w(t,0)) \left( w(t,0) - \frac{N_u(t)}{N_\infty} \right) - \int_{\mathbb{R}} \rho'_R u_\infty \partial_x w H'(w) \, dx\\
        &\quad  - \int_{\mathbb{R}} \rho_R (\partial_x w)^2 H''(w) u_\infty \, dx + \int_{\mathbb{R}} \rho_R \partial_x w H'(w) \underbrace{(u'_\infty - h u_\infty)}_{= -N_\infty \mathbf{1}_{x \geq 0}} \, dx.
    \end{align*}
    Let us analyze the last integral involving the indicator function. We compute
    \begin{align*}
        \int_{\mathbb{R}} \rho_R \partial_x w H'(w) (-N_\infty \mathbf{1}_{x \geq 0}) \, dx
        &= - N_\infty \int_0^\infty \rho_R(x) \partial_x H(w(t,x)) \, dx \\
        &= - N_\infty \left[ \rho_R(x) H(w(t,x)) \right]_{0}^\infty + N_\infty \int_0^\infty \rho'_R(x) H(w(t,x)) \, dx \\
        &= N_\infty \rho_R(0) H(w(t,0)) + N_\infty \int_{\mathbb{R}} \rho'_R(x) H(w(t,x)) \, dx.
    \end{align*}
    Now we pass to the limit as $R \to \infty$.
    Using the bound $u_\infty^{1/2} \partial_x w \in L^2(\mathbb{R}^+ \times \mathbb{R})$ from \eqref{eq: borne d_x w } and that $H'(w) \in L^\infty$, the Cauchy-Schwarz inequality gives
    \[
        \left| \int_{\mathbb{R}} \rho'_R u_\infty \partial_x w H'(w) \, dx \right| \leq C \left( \int_{\mathbb{R}} |\rho'_R| u_\infty \, dx \right)^{1/2} \left( \int_{\mathbb{R}} |\rho'_R| u_\infty (\partial_x w)^2 \, dx \right)^{1/2}.
    \]
    Thus, for almost every $t \geq 0$, this term vanishes as $R \to \infty$.
    Furthermore, $\lim_{R \to \infty} \rho_R(0) = 1$, and by the definition of $\rho_R$ we have
    \[
        \lim_{R \to \infty} N_\infty \int_{\mathbb{R}} \rho'_R(x) H(w(t,x)) \, dx = \lim_{R \to \infty} - N_\infty \int_{\mathbb{R}} \chi(x-R) H(w(t,x)) \, dx.
    \]
    Recalling that in the weak sense $\lim_{R \to \infty} \int_{\mathbb{R}} \chi(x-R) w(t,x) \, dx = \frac{N_u(t)}{N_\infty}$, and using Jensen's inequality, we have
    \[
        \liminf_{R \to \infty} \int_{\mathbb{R}} \chi(x-R) H(w(t,x)) \, dx \geq H \left( \frac{N_u(t)}{N_\infty} \right).
    \]
    Finally, gathering all terms and taking the limit $R \to \infty$, we obtain
    \begin{align*}
        \frac{d}{dt} \int_{\mathbb{R}} H(w) u_\infty \, dx
        &\leq - N_\infty \left[ H \left( \frac{N_u}{N_\infty} \right) - H(w(t,0)) - H'(w(t,0)) \left( \frac{N_u}{N_\infty} - w(t,0) \right) \right]
        \\
        &\quad - \int_{\mathbb{R}} (\partial_x w)^2 H''(w) u_\infty \, dx.
    \end{align*}
    Since $H$ is convex, the term in bracket is non-negative, as well as the integral term. \\
    
    Now, we extend the result to general initial data. Assume $u_0 \in L^1(\mathbb{R})$ such that $H(\frac{u_0}{u_\infty})u_\infty \in L^1(\mathbb{R})$.
    Let $(u_0^{(n)})_{n \in \mathbb{N}}$ be a sequence in $\mathcal{C}^1_c(\mathbb{R})$ such that $\|u_0^{(n)} - u_0\|_{1} \to 0$ as $n \to \infty$.
    Let $w^{(n)}(t,x) = \frac{u^{(n)}(t,x)}{u_\infty(x)}$ be the corresponding solution.
    
    We first assume that $H$ is convex and satisfies $H' \in L^\infty(\mathbb{R})$.
    According to the previous inequality  \eqref{ineq:entropy}, for any non-negative test function $\phi \in \mathcal{C}_c^\infty(\mathbb{R}^+)$ and any $n \in \mathbb{N}$, we have:
    \[
        \int_0^\infty (-\phi'(t)) \int_{\mathbb{R}} H(w^{(n)}(t,x)) u_\infty(x) \, dx \, dt
        \leq \phi(0) \int_{\mathbb{R}} H \left( \frac{u_0^{(n)}(x)}{u_\infty(x)} \right) u_\infty(x) \, dx.
    \]
    We now verify the convergence of both sides. For the left-hand side, for any $n \in \mathbb{N}^*$, we estimate:
    \begin{align*}
        \bigg| \int_0^\infty (-\phi'(t)) &\int_{\mathbb{R}} \big( H(w^{(n)}) - H(w) \big) u_\infty \, dx \, dt \bigg| \\
        &\leq \int_0^\infty |\phi'(t)| \int_{\mathbb{R}} \|H'\|_\infty |w^{(n)}(t,x) - w(t,x)| u_\infty(x) \, dx \, dt \\
        &= \|H'\|_\infty \int_0^\infty |\phi'(t)| \, \|u^{(n)}(t) - u(t)\|_{1} \, dt \\
        &\leq \|H'\|_\infty \|\phi'\|_{L^1(\mathbb{R}^+)} \|u_0^{(n)} - u_0\|_{1} \xrightarrow[n \to \infty]{} 0,
    \end{align*}
    where we used $L^1$-contraction. 
    
    Similarly, for the initial data term, using the Lipschitz continuity of $H$:
    \[
        \lim_{n \to \infty} \int_{\mathbb{R}} H \left( \frac{u_0^{(n)}(x)}{u_\infty(x)} \right) u_\infty(x) \, dx = \int_{\mathbb{R}} H \left( \frac{u_0(x)}{u_\infty(x)} \right) u_\infty(x) \, dx.
    \]
    Passing to the limit as $n \to \infty$ in the entropy inequality, we obtain for every non-negative $\phi \in \mathcal{C}_c^\infty(\mathbb{R}^+)$:
    \[
        \int_0^\infty (-\phi'(t)) \int_{\mathbb{R}} H(w(t,x)) u_\infty(x) \, dx \, dt
        \leq \phi(0) \int_{\mathbb{R}} H \left( \frac{u_0(x)}{u_\infty(x)} \right) u_\infty(x) \, dx.
    \]
    
    Finally, to remove the assumption that $H'$ is bounded, consider a general continuous convex function~$H$.
    Let $(H_k)_{k \geq 0}$ be an increasing sequence of Lipschitz continuous convex functions approximating $H$ (e.g., via regularization and truncation) such that $H_k \to H$ pointwise as $k \to \infty$ and pass to the limit in the relative entropy inequality.
    \end{proof}
	
	\subsection{A Poincar\'e inequality}
	
	In the spirit of \cite{CCP} (Proposition 4.2 and Lemma 4.3), having this entropy inequality at hand, it is natural to ask whether it yields exponential convergence toward the stationary state in $L^2(u_\infty^{-1})$. 
It is well known that such a property is equivalent to a Poincaré inequality. 
According to \cite{Muckenhoupt1972,BartheR2008}, the latter holds provided that $h$ satisfies the growth condition \eqref{eq: borne V} at $+\infty$. 
	\begin{prop} [Poincar\'e inequality]
	Assume that the drift $h$ satisfies the additional property
    \begin{equation} \label{eq: borne V}
       \sup_{x\geq x_1}\bigg\{h(x)\int_x^\infty\frac{1}{h(y)}dy\bigg\} <\infty.
    \end{equation}
Then, there is a constant $\beta>0$ such that for any $f\in H^1(\R)$, we have the Poincar\'e inequality 
    \begin{equation} \label{eq:poincare}
        \int_\R\bigg(f(x)-\int_\R f(y)u_\infty(y)\,dy\bigg)^2u_\infty(x)\,dx\leq \beta\int_\R\big(f'(x)\big)^2u_\infty(x)\,dx.
   \end{equation}
Consequently, with $\mu=\beta^{-1}$, for any $u_0\in L^2(u_\infty^{-1})$ that satisfies $\int_\R u_0(x)dx = 1$, we have
	    \[
	    \forall t \geq 0, \qquad\|u(t)-u_\infty\|_{L^2(u_\infty^{-1})}\leq e^{-\mu t}\|u_0-u_\infty\|_{L^2(u_\infty^{-1})}.
	    \]
	\end{prop}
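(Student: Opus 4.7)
The plan splits into two independent pieces: (i) establish the Poincaré inequality \eqref{eq:poincare} via the Muckenhoupt--Bobkov--G\"otze criterion, and (ii) combine it with the relative entropy equality from Theorem~\ref{th:entropy} applied to the quadratic entropy $H(w)=\tfrac12(w-1)^2$ to get exponential decay by Grönwall.

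For (i), I would invoke the Bobkov--G\"otze criterion, cited in the statement through \cite{BartheR2008}: for the probability measure $d\mu = u_\infty\,dx$ on $\R$, the Poincaré inequality holds with constant comparable to $\max(B_-, B_+)$ where, for a median $m$ of $\mu$,
\[
B_+ = \sup_{x>m}\bigg(\int_x^\infty u_\infty(y)\,dy\bigg) \bigg(\int_m^x \frac{dy}{u_\infty(y)}\bigg), \qquad B_- = \sup_{x<m}\bigg(\int_{-\infty}^x u_\infty(y)\,dy\bigg)\bigg(\int_x^m \frac{dy}{u_\infty(y)}\bigg).
\]
The bound $B_-<\infty$ is immediate since, by \eqref{eq:  u stat +infini}, $u_\infty$ has Gaussian behaviour at $-\infty$, for which this Muckenhoupt criterion is classical. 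For $B_+$, the two-sided estimate $u_\infty \sim N_\infty/h$ from \eqref{est:uinfty} together with assumption \eqref{eq: borne V} gives $\int_x^\infty u_\infty \leq C/h(x)$, while $\int_m^x dy/u_\infty \leq C' \int_m^x h(y)\,dy$. The finiteness of $B_+$ thus reduces to
\[
\sup_{x>m} \frac{1}{h(x)}\int_m^x h(y)\,dy <\infty,
\]
a structural consequence of \eqref{eq: borne V}. A practical constant $\beta$ can then be read off directly from these estimates.

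For (ii), applying Theorem~\ref{th:entropy} to $H(w)=\tfrac12(w-1)^2$ (so $H''\equiv 1$ and $JD_H\le 0$) yields
\[
\frac{d}{dt}\int_\R (w(t,x)-1)^2 u_\infty(x)\,dx \le -2\int_\R (\partial_x w(t,x))^2 u_\infty(x)\,dx.
\]
Mass conservation \eqref{eq:Conservation mass} combined with $\int u_\infty = 1$ yields $\int (w(t)-1)u_\infty\,dx = 0$, so Poincaré \eqref{eq:poincare} applied to $f = w(t)$ gives $\int(w-1)^2 u_\infty\,dx \le \beta \int(\partial_x w)^2 u_\infty\,dx$. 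Since $\|u(t)-u_\infty\|^2_{L^2(u_\infty^{-1})} = \int (w(t)-1)^2 u_\infty\,dx$, combining the two inequalities produces $\tfrac{d}{dt}\|\cdot\|^2 \le -2\beta^{-1}\|\cdot\|^2$, and Grönwall delivers the stated decay with $\mu = 1/\beta$. A density argument in $L^2(u_\infty^{-1})$ extends the equality regime of Theorem~\ref{th:entropy} to the class of admissible initial data.

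The main obstacle is step (i), specifically the verification of $B_+<\infty$: bounding $h(x)^{-1}\int_m^x h(y)\,dy$ is not implied by the generic growth conditions \eqref{eq:comportement h +infty} alone — indeed $h(x)=x^2$ satisfies \eqref{eq:comportement h +infty} but fails \eqref{eq: borne V} — and it is precisely \eqref{eq: borne V} that rules this in. Setting $g(x)=\int_x^\infty dy/h(y)$, \eqref{eq: borne V} reads $g \le -Cg'$, so $g$ decays at least exponentially; through $h = -1/g'$ this propagates to the desired control on $\int h$, essentially by an application of the Muckenhoupt test in the dual direction. Making this implication precise, and tracking the dependence of the final constant $\beta$ on $C$ and on the asymptotic constant $N_\infty$, is where the technical work of the proof sits.
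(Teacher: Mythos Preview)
Your part (ii) is identical to the paper's: quadratic entropy, mass conservation gives zero mean of $w-1$, Poincaré, Gr\"onwall.

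For part (i) the paper takes a different and shorter route. Rather than the $B_\pm$ form of Muckenhoupt you invoke, it verifies the pointwise criterion
\[
U(x)\big(1-U(x)\big)\;\le\;k\,u_\infty(x),\qquad U(x)=\int_x^\infty u_\infty,
\]
which is checked directly from the two--sided bound $u_\infty\sim N_\infty/h$ at $+\infty$ (giving $U\le \tfrac{2M}{N_\infty}u_\infty$ via~\eqref{eq: borne V}) and from the Gaussian tail at $-\infty$ (giving $1-U\le u_\infty$). This avoids ever integrating $1/u_\infty$, and in particular sidesteps entirely the estimate
\[
\sup_{x>m}\;\frac{1}{h(x)}\int_m^x h(y)\,dy\;<\;\infty
\]
that you correctly flag as the crux of your approach.

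Your approach is nonetheless salvageable, but the sketch you give for this last bound (``$g$ decays exponentially, this propagates through $h=-1/g'$'') is not a proof. A clean way to close the gap: from~\eqref{eq: borne V} and monotonicity of $h$,
\[
\frac{2M}{h(y+2M)}\;\le\;\int_y^{y+2M}\frac{dz}{h(z)}\;\le\;\int_y^\infty\frac{dz}{h(z)}\;\le\;\frac{M}{h(y)},
\]
so $h(y)\le\tfrac12\,h(y+2M)$ for all $y\ge x_1$. Iterating, $h(x-2kM)\le 2^{-k}h(x)$, whence
\[
\int_m^x h \;\le\; \sum_{k\ge0}\int_{x-2(k+1)M}^{x-2kM} h \;\le\; 2M\,h(x)\sum_{k\ge0}2^{-k}\;=\;4M\,h(x),
\]
and $B_+<\infty$ follows. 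So both routes work; the paper's is simply more economical because the CDF criterion is tailored to the one estimate on $u_\infty$ already available, whereas the $B_\pm$ criterion forces you to produce this dual inequality for $h$.
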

	\begin{proof}
	    According to estimate \eqref{eq:limh'/h^2}, for any $x\geq x_1$, we have
        \[
        U(x) :=\int_x^\infty u_\infty(y)dy\leq N_\infty\int_x^\infty\frac{dy}{h(y)}\leq \frac{M}{h(x)} \qquad\text{thanks to \eqref{eq: borne V}}.
        \]
        In addition, according to  \eqref{est:uinfty} and \eqref{eq:limh'/h^2}, we can find $x_2\geq x_1$ such that for $x\geq x_2$, $\sup_{y\geq x}\frac{h'(y)}{h(y)^2}\leq \frac{1}{2}$ and 
        \[
        u_\infty(x)\geq \frac{N_\infty}{2h(x)}\geq \frac{N_\infty}{2M}U(x).
        \]
        Now, we use that $u_\infty$ is Gaussian on $(-\infty,0)$ according to \eqref{eq:  u stat +infini}, so we have for any $x\leq -|h_0|+1$,
        \begin{align*}
            1-U(x) &=\int_{-\infty}^x u_\infty(y)dy=C_\infty\int_{-\infty}^xe^{-\frac{y^2}{2}+h_0y}dy\\
            &\leq C_\infty \int_{-\infty}^x\frac{h_0-y}{h_0-x}e^{-\frac{y^2}{2}+h_0y}dy \leq C_\infty \frac{1}{h_0-x}e^{-\frac{x^2}{2}+h_0x}\\
            &\leq u_\infty(x).
        \end{align*}
       % And finally, we get \[ \fa x\leq -|h_0|+1,\quad  1-U(x)\leq u_\infty(x)\]
        Therefore, we can find a constant $k>0$ such that for any $x\in \R$, we have
        \[
        (1-U(x))U(x)\leq ku_\infty(x).
        \]
        This property ensures that the Poincaré inequality with the weight $u_\infty$, i.e.,  \eqref{eq:poincare} holds true. We refer to \cite{Muckenhoupt1972} for more details on this property.

        Consequently, when $|u_0|\leq C_0 u_\infty$, the entropy inequality for $H(z)=(z-1)^2$ gives
        \begin{align*}
            \frac{d}{dt}\int_\R \big(w(t,x)-1\big)^2u_\infty(x)\,dx 
            &\leq -2\int_\R\big(\partial_xw(t,x)\big)^2u_\infty(x)\,dx\\
            &\leq -2\beta^{-1}\int_\R \bigg(w(t,x)-\int_\R w(t,y)u_\infty(y)\,dy\bigg)^2u_\infty(x)\,dx
        \end{align*}
        Using that $\int_\R w(t,x)u_\infty(x)\,dx = \int_\R u(t,x)\,dx = 1$, we finally get with 
        \[
         \frac{d}{dt}\int_\R \big(w(t,x)-1\big)^2u_\infty(x)\,dx\leq - 2\beta^{-1} \int_\R \big(w(t,x)-1\big)^2u_\infty(x)\,dx.
        \]
        We conclude with Gronwall Lemma. 
	\end{proof}

	\section{Long time convergence using Doeblin-Harris method}
	\label{sec:LongTermDH}
	
	Our purpose is to prove the convergence of the solution to\eqref{eq:FPSL lin} towards its stationary state as stated in Theorem~\ref{th:conv DH}. We adapt to our situation the Doeblin-Harris method, see \cite{Harris_1956, HairerM-Doeblin} and also for degenerate parabolic equations \cite{SaSm2024}. We begin with some notation and then detail the proof in the following subsections.
	\\

A direct and standard consequence of uniqueness in Theorem~\ref{th:existence weak sol} is that the Fokker-Planck Eq. \eqref{eq:FPSL lin} generates a contraction  semi-group $\big(S(t)\big)_{t\geq 0}$,  $S(t):L^1(\R)\rightarrow L^1(\R)$  and it is mass and positivity preserving linear operator, i.e.,  for all $t\geq 0$, $u_0\in L^1(\R)$
	    \begin{equation}\label{eq:continuité S}
	    \|S(t)u_0\|_1\leq \|u_0\|_1, \qquad \int_{\R} S(t)u_0(x) \, dx = \int_{\R} u_0(x) \, dx, \qquad u_0 \geq 0 \Rightarrow S(t)u_0 \geq 0.
	    \end{equation}
	    
The core of the Doeblin method relies on finding a constant $\alpha > 0$, a probability density $\nu$, and a time $t_0 \geq 0$ such that for any initial probability density $u_0 \in L^1(\R)$, the following lower bound holds:
\begin{equation} \label{eq: condition Doeblin}
    S(t_0)f \geq \alpha \nu, \qquad \qquad \qquad \text{(Doeblin condition)}.
\end{equation}
However, a major difficulty arises from the fact that the drift~$h$ is linear on $(-\infty, x_0)$. Consequently, if the support of the initial data $u_0$ is localized in $(-\infty, -A)$ for any $A>0$, the bulk of its mass at time~$t$ will remain concentrated in the interval $(-\infty, -A e^{-t})$. It is therefore impossible to find a uniform time $t_0$ with respect to all initial conditions to satisfy condition~\eqref{eq: condition Doeblin}.

To overcome this obstacle, we restrict our analysis to the weighted space $E$ defined as
\[
    E = \left\{ f \in L^1(\R) : \int_{\R} \big(1 + (-x)_+\big) |f(x)| \, dx < \infty \right\},
    \qquad \|f\|_E = \|(1 + (-x)_+) f\|_{L^1}.
\] 
 This space ensures that any family of functions uniformly bounded in $E$ exhibits mass tightness, specifically preventing mass escape towards $-\infty$. 
Moreover, the superlinear drift on $(0,\infty)$ sends mass to $+\infty$ in finite time. This yields  a uniform lower bound on the source term $\delta_0 \int_0^t N(t)\,dt$, which, by comparison with a suitably chosen subsolution, allows us to obtain the Doeblin condition \eqref{eq: condition Doeblin}.
\\

To prove Theorem \ref{th:conv DH}, we proceed in two steps.
The first step consists of proving the existence of a uniform minorization through the construction of a function $\nu$, as in equation~\eqref{eq: condition Doeblin},  under the assumption that the initial data are uniformly bounded in $E$ by a constant $C>0$.
To this end, we first show that the source term
\[
\delta_0 \int_0^t N(\tau)\, \mathrm{d}\tau
\]
admits a uniform lower bound, provided that the initial data satisfy the uniform bound $\|u_0\|_E \leq C$.
Next we construct a subsolution to the problem Eq.~\eqref{eq:FPSL lin}, via a solution of the heat equation with the above source term (and thus inheriting the corresponding lower bound).
In the second step, we establish two key estimates on the $E$-norm of the solution (see Lemma~\ref{lem:estimation u_E}), in order to apply the classical Doeblin--Harris arguments and introduce a suitable norm, defined as a combination of the $\|\cdot\|_{1}$ and $\|\cdot\|_{E}$ norms to identify a time $T>0$ such that the semigroup map $S(T)$ is a contraction.

\subsection{A uniform Lower Estimate }
For the following analysis, we write $S(t)u_0 = u(t)$.
As explained, we first show a uniform lower bound on the source term $\int_0^t N_u(\tau)d\tau$.
\begin{prop}\label{prop:Minoration source}
    There is a constant $c_\varphi>0$ such that for any $C> 0$, we can find a time $t_C\geq 0$ (that only depends on $C$) (i.e., $u_0\geq 0$ and $\int_\R u_0(x)\,dx=1$) such that for any initial probability density $u_0\in E$ with $\|u_0\|_{E}\leq C$, we have
    \begin{equation}\label{eq: minoration int N_u}
        \fa t \geq t_C, \qquad \int_0^t N_u(\tau)\,d\tau \geq \frac{1}{c_\varphi}.
    \end{equation}
\end{prop}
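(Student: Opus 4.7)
\emph{The plan} is to bound $\int_0^t N_u(\tau)\,d\tau$ from below by comparison with the source-free dynamics. Let $\tilde u$ be the solution of the Fokker--Planck equation without source,
$$\partial_t \tilde u + \partial_x(h\tilde u) - \partial_x^2 \tilde u = 0, \qquad \tilde u(0,\cdot)=u_0,$$
posed on $\R$. Integrating this PDE in $x$ and using the boundary behaviour $h\tilde u \to \tilde N(t):=\lim_{x\to\infty}h(x)\tilde u(t,x)$ gives $\int_\R \tilde u(t,x)\,dx = 1 - \int_0^t \tilde N(\tau)\,d\tau$. By the non-negativity of the source $N_u(t)\delta_0$ and the zero initial data of $v := u - \tilde u$, a Duhamel representation with the nonnegative Green's function of the underlying Fokker--Planck semigroup yields $v \geq 0$; passing to the limit $x \to +\infty$ in $h\,v$ then gives $N_u \geq \tilde N$. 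Consequently
$$\int_0^t N_u(\tau)\,d\tau \;\geq\; 1 - \int_\R \tilde u(t,x)\,dx,$$
so it suffices to show $\int_\R \tilde u(t_C,x)\,dx \leq 1 - 1/c_\varphi$ for some $t_C = t_C(C)$.

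To control $\int_\R \tilde u(t,x)\,dx$, I would combine two estimates. First, Markov's inequality applied to $\|u_0\|_E \leq C$ shows that the initial mass on $(-\infty,-A]$ is at most $C/A$, so choosing $A \approx 2c_\varphi C$ confines the ``far-left tail'' to a small fraction of the total mass. Second, for the remaining mass in $[-A,+\infty)$, the deterministic characteristics $\dot X = h(X)$ transport to $x_0$ in time $O(\log A)$ using $h(x)=-x+h_0$ on $(-\infty,x_0)$ from~\eqref{eq:comportement h -infty}, while the super-linear condition $\int^\infty dy/h(y) < \infty$ of~\eqref{eq:comportement h +infty} sweeps mass on $(x_1,+\infty)$ to $+\infty$ in time at most $T_0 := \int_{x_1}^\infty dy/h(y)$. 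At the PDE level this is made rigorous by a test-function argument on $\tilde u$: pick $\psi \in \mathcal{C}^\infty(\R)$ nonnegative with $\psi \equiv 1$ on $[-A,x_1]$ and $\psi \equiv 0$ on $[2x_1,+\infty)$, and estimate $\tfrac{d}{dt}\int_\R \tilde u\,\psi\,dx$ exploiting the favourable sign of $-h\psi'$ on $\mathrm{supp}(\psi')\subset [x_1,2x_1]$ (where $h>0$ is large by~\eqref{eq:comportement h +infty}) together with boundedness of $\psi''$.

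The main obstacle is quantifying, uniformly over $u_0$ with $\|u_0\|_E \le C$, the competition between the super-linear drift that sweeps mass to $+\infty$ and the diffusion that disperses it back. The $E$-weight prevents escape to $-\infty$, and the linear drift on $(-\infty,x_0)$ pushes mass towards the origin, but balancing the diffusion against the super-linear drift on $(x_1,+\infty)$ to obtain an explicit uniform time $t_C(C)$ after which a fixed fraction of the total mass has crossed to $+\infty$ requires a careful Lyapunov or sub-solution argument on the source-free equation, tailored to the distinct asymptotic regimes of $h$ at $\pm\infty$.
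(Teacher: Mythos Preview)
Your proposal is an outline rather than a proof: you yourself concede at the end that the key step---showing a uniform fraction of the mass of the source-free solution $\tilde u$ escapes to $+\infty$ by a time depending only on $C$---``requires a careful Lyapunov or sub-solution argument'' that you do not supply. The test-function sketch with $\psi$ supported near $[x_1,2x_1]$ does not close: it only tracks mass in a bounded window and says nothing quantitative about how diffusion competes with the drift globally. Moreover, the detour through the source-free problem $\tilde u$ introduces an auxiliary object whose well-posedness (existence, meaning of the flux $\tilde N$ at $+\infty$) is not covered by the paper's theory, which is set up specifically for the mass-conserving equation with source.

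The paper bypasses all of this with a single explicit Lyapunov function. It defines
\[
\varphi(x)=\int_0^x e^{-\mathcal H(y)}\Big(\int_{-\infty}^y e^{\mathcal H(z)}\,dz\Big)dy,
\]
which solves the adjoint ODE $\varphi''+h\varphi'=1$, is nondecreasing, satisfies $\varphi(0)=0$, $\varphi(+\infty)=c_\varphi<\infty$, and $|\varphi(x)|\le M(1+(-x)_+)$. Testing the full equation for $u$ against $\varphi$ gives, after integration by parts and using the flux condition at $+\infty$,
\[
\frac{d}{dt}\int_\R \varphi\,u\,dx \;=\; \int_\R u\,dx \;-\; c_\varphi N_u(t)\;=\;1-c_\varphi N_u(t).
\]
Integrating in time yields $\int_0^t N_u = c_\varphi^{-1}\big(t+I(0)-I(t)\big)$ with $I(t)=\int\varphi\,u(t)$, and the bounds $|\varphi|\le M(1+(-x)_+)$ and $0\le\varphi\le c_\varphi$ on $\R_+$ give $I(0)-I(t)\ge -MC-c_\varphi$. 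Hence $\int_0^t N_u\ge c_\varphi^{-1}$ as soon as $t\ge t_C:=MC+c_\varphi+1$. This is precisely the ``Lyapunov argument tailored to the distinct asymptotic regimes of $h$'' that you were looking for, and it acts directly on $u$ without any comparison to a source-free problem.
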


\begin{proof}
    To get the desired lower bound, we first introduce the function $\varphi$. For $x \in \mathbb{R}$, define
    \[
    \varphi(x)
    = \int_{0}^{x} e^{-\mathcal{H}(y)}
       \left( \int_{-\infty}^{y} e^{\mathcal{H}(z)}\,dz \right) dy .
    \] 
    This function satisfies the following properties
     \begin{equation}\label{eq:eqdif sur phi}
        \fa x \in \R, \qquad\varphi''(x)+h(x)\varphi'(x) = 1,
    \end{equation}
    \begin{equation}\label{eq:behavior phi}
        \varphi(x) \sim_{x\rightarrow -\infty}- \log|x|\:\:,\:\:
        \lim_{x\rightarrow \infty} \varphi(x) = c_\varphi <\infty \text{ and } \varphi \text{ is non-decreasing on }\R, 
    \end{equation}
    \begin{equation}\label{eq:signe phi}
        \varphi(x) \geq 0\Longleftrightarrow x\geq 0.
    \end{equation}
Also, since $|\varphi(x)| \leq M \big((-x)_++1\big)$, for a  constant $M\geq 0$, for any $f\in E$, we have
    \begin{equation}\label{eq:encadrement phi E}
        \int_\R|\varphi(x)f(x)|dx\leq M\int_\R\big((-x)_++1\big)|f(x)|dx=M\|f\|_E.
    \end{equation}
    Now, let $I(t) = \int_\R\varphi(x)u(t,x)\,dx$. By \eqref{eq:encadrement phi E} together with point~2 of Lemma~\ref{lem:estimation u_E}, this quantity is well defined for all $t \geq 0$.  Then, differentiating $I(t)$, we obtain
    \begin{align*}
        I'(t) &= \int_\R\big(-\partial_x\big(h(x)u\big)+\partial_x^2u + N_u(t)\delta_0\big)\varphi(x)\,dx \\
        &= \int_\R u(t,x)\big(h(x)\varphi'(x)+\varphi''(x)\big)\,dx - \varphi(0)N_u(t) \\
        &= \int_\R u(t,x)\,dx - c_\varphi N_u(t) =1-c_\varphi N_u(t).
    \end{align*}
    Therefore, integrating on $(0,t)$ we obtain thanks to  \eqref{eq:signe phi}
    $$
    \fa t\geq 0,\qquad \int_0^t N_u(\tau)\,d\tau = \frac{1}{c_\varphi}\big(t+I(0)-I(t)\big).
    $$
    Now, using the properties  \eqref{eq:encadrement phi E} and \eqref{eq:behavior phi} for $I(t)$, we estimate 
    \begin{align*}
        I(0)-I(t) &= \int_\R\varphi(x)\big(u_0(x)-u(t,x)\big)\,dx 
        \\
        &\geq -\int_{-\infty}^0|\varphi(x)|u_0(x)\,dx - \int_{0}^\infty|\varphi(x)|u(t,x)\,dx \\
        &\geq -M \|u_0\|_{E} - c_\varphi \int_0^\infty u(t,x)\,dx \quad \text{according to  \eqref{eq:encadrement phi E}} \text{ and } \eqref{eq:behavior phi},\\
        &\geq -M C - c_\varphi \qquad\text{since}\quad \int_0^\infty u(t,x)dx \leq 1. \\
    \end{align*}
    Therefore, taking $t_C = MC+1+c_\varphi$, we get, for any $t\geq t_C$,
    $$
    \int_0^t N_u(\tau)\,d\tau \geq \frac{1}{c_\varphi}>0.
    $$
\end{proof}
Using this lower bound on the source term, we construct an explicit subsolution which allows us to establish the Doeblin condition.
\begin{prop}\label{prop:minoration S(t)u}
		For any $ C\geq 0$, there exists a time $t'_C\geq 0$,  and a probability density $\mu\in L^1(\R)$ such that for any probability density $u_0\in E$  that satisfies  $\|u_0\|_E\leq C$, we have
		\[
		\fa t \geq t_C',\quad S(t)u_0\geq \eta(t) \times \mu.
		\]
		where $\eta$ is a strictly positive continuous function. 
	\end{prop}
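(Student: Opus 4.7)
The plan is to combine the lower bound on the injected mass from Proposition~\ref{prop:Minoration source} with a local parabolic minorization. First, by linearity and positivity of the semigroup, I decompose $u = u^h + u^s$, where $u^h$ is the free evolution of $u_0$ with no source and $u^s \geq 0$ solves
\[
\partial_t u^s + \partial_x(h u^s) - \partial_x^2 u^s = N_u(t)\,\delta_0,\qquad u^s(0,\cdot)=0 .
\]
Both components are nonnegative, so $u \geq u^s$ and it suffices to bound $u^s$ from below. To localize, I fix $A,B>0$ and consider the Dirichlet problem for the same PDE on $I=(-A,B)$; writing $\tilde u^s$ for its solution, the parabolic maximum principle applied to $w = u^s - \tilde u^s$ (which starts from zero in $I$ and is nonnegative on $\partial I$ since $\tilde u^s = 0 \leq u^s$ there) yields $u^s \geq \tilde u^s$ on $I$.

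By Duhamel's formula, $\tilde u^s(t,x) = \int_0^t N_u(s)\,G^D(t-s,x,0)\,ds$, where $G^D$ denotes the Dirichlet Green's function of $\partial_\tau + \partial_x(h\cdot) - \partial_x^2$ on $I$. Since the operator is uniformly parabolic on $I$ with smooth bounded coefficients, classical parabolic regularity (Aronson's Gaussian lower bound, or equivalently the strong Harnack inequality on parabolic cylinders) guarantees that for any $\varepsilon > 0$ and any compact $K \subset I$ containing the origin, the map $(\tau, x) \mapsto G^D(\tau, x, 0)$ is continuous and strictly positive on $[\varepsilon, \infty) \times K$. Consequently
\[
m(t) := \inf_{\tau \in [\varepsilon, t],\; y \in K} G^D(\tau, y, 0)
\]
defines a strictly positive, non-increasing, continuous function of $t \geq \varepsilon$.

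Fix $K = [-A/2, B/2]$ and any probability density $\mu$ supported in $K$ with $\|\mu\|_\infty < \infty$. Setting $t_C' = t_C + \varepsilon$ with $t_C$ from Proposition~\ref{prop:Minoration source}, I obtain, for every $t \geq t_C'$ and $x \in K$,
\[
u(t,x) \;\geq\; \tilde u^s(t,x) \;\geq\; m(t)\int_0^{t-\varepsilon} N_u(s)\,ds \;\geq\; \frac{m(t)}{c_\varphi},
\]
where the last inequality uses $t-\varepsilon \geq t_C$. Choosing $\eta(t) = m(t)/(c_\varphi \|\mu\|_\infty)$ then gives $S(t)u_0 \geq \eta(t)\mu$ on all of $\R$, as required.

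The main obstacle is Step~2, namely establishing the positivity and continuity of $G^D$ in the form used above. Although this follows from standard parabolic theory on a bounded domain, some care is needed because the infimum runs over $\tau \in [\varepsilon, t]$ with $t$ arbitrarily large: the Dirichlet semigroup contracts to zero as $\tau\to\infty$, so $m(t)\to 0$. This decay is compatible with the statement, which only requires $\eta$ to be strictly positive and continuous rather than bounded below, but it means one cannot simply iterate the lower bound in the next step of the Doeblin-Harris scheme without also exploiting the tightness provided by the $E$-norm.
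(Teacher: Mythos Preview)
Your argument is correct and follows a route that is close in spirit to the paper's but differs in its technical execution. The paper does not decompose $u=u^h+u^s$; instead it performs the gauge transformation $W(t,x)=e^{-\frac12\mathcal H(x)}u(t,x)$, which turns the drift into a bounded potential on $[-1,1]$, absorbs this potential by the factor $e^{Mt}$, and then compares directly with the explicit Dirichlet heat kernel on $[-1,1]$ driven by the source $e^{Mt}N_u(t)\delta_0$. Your approach skips the change of variables and invokes Aronson-type positivity of the Dirichlet Green's function for the original operator on a bounded interval. This is conceptually cleaner and applies verbatim to any $C^1$ drift, at the price of citing a heavier piece of parabolic theory; the paper's transformation is more elementary and yields an explicit exponential form for $\eta(t)$, which is convenient when the constants are tracked in the subsequent Doeblin--Harris step.

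Two small remarks. First, the decomposition $u=u^h+u^s$ is not really needed: since $u(0,\cdot)\geq 0=\tilde u^s(0,\cdot)$ on $I$ and $u\geq 0=\tilde u^s$ on $\partial I$, the comparison $u\geq\tilde u^s$ follows directly, and you avoid having to argue that the ``free'' evolution $u^h$ (which is not governed by the semigroup $S(t)$ of the paper) is well posed. Second, your remark that $m(t)\to 0$ is exactly right and matches the paper's $\eta(t)=Ce^{-(\alpha+M)t}$; as you note, this is harmless for Proposition~\ref{prop:minoration S(t)u} itself, and the later contraction argument only ever evaluates $\eta$ at a single fixed time $T_C$.
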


\begin{proof}
	    We begin with a change of variables to remove the drift term from \eqref{eq:FPSL lin}.
    Next, we prove that the solution dominates a suitable subsolution of the heat equation on $[-1,1]$, whose source term satisfies the lower bound in \eqref{eq: minoration int N_u}. Diffusion effects then provide the probability density $\mu$ and the positive constant $\eta>0$.\\
	    
		Let $C>0$ and $u_0$ be a data satisfying the given hypothesis. According to Propostion \ref{prop:Minoration source}, there exists a time $t_C\geq 0$ such that for any $t\geq t_C$ the integrated source term satisfies $\int_0^t N_u(\tau)\,d\tau \geq \frac{1}{c_\varphi}$. \\
		
		Let us consider the function $W(t,x) =\exp\big(-\frac{1}{2}\mathcal{H}(x)\big)u(t,x)$, it satisfies
		$$
		\fa x\in\R, t\geq 0, \quad \partial_tW(t,x) = -\frac{1}{2}\big(h'(x)+\frac{1}{2}h(x)^2\big)W(t,x) +\partial_x^2W(t,x) +N_u(t)\delta_0(x).
		$$
		Now, let $M = \frac{1}{2}\sup_{x\in[-1,1]}\big|h'(x)+\frac{1}{2}h(x)^2\big|$. Then, for any $t\geq 0$ and $x\in [-1,1]$, we have
		$$
		\partial_tW(t,x)\geq -MW(t,x)+\partial_x^2W(t,x) +N_u(t)\delta_0(x).
		$$
		We consider $\Tilde{W}(t,x) = W(t,x)e^{Mt}$, and $\Tilde{N}_u(t) = N_u(t)e^{Mt}$ then multiplying by $e^{Mt}$ the above equation, we obtain 
		$$
		\fa x \in [-1,1], t\geq 0, \qquad \partial_t\Tilde{W}(t,x)\geq \partial_x^2\Tilde{W}(t,x)+\Tilde{N}_u(t)\delta_0(x).
		$$
		
		Let $\Phi$ be the solution to the  heat equation with Dirichlet conditions  on $[-1,1]$ with a Delta function as data.
		\begin{equation}
			\begin{cases}
				\partial_t \Phi(t,x) - \partial_x^2 \Phi(t,x) = 0, & \text{for } t > 0,\ x \in (-1,1), \\
				\Phi(t,-1) = \Phi(t,1) = 0, & \text{for } t > 0, \\
				\Phi(0,x) = \delta_0(x), & \text{for } x \in [-1,1].
			\end{cases}
		\end{equation}
		Let $\underline{W}(t,x) = \int_0^t\tilde{N}_u(s)\Phi(t-s,x)ds$. Then it satisfies
		\begin{equation}
			\begin{cases}
				\partial_t \underline{W}(t,x) - \partial_x^2 \underline{W}(t,x) = \tilde{N}_u(t)\delta_0(x), & \text{for } t > 0,\ x \in (-1,1), \\
				\underline{W}(t,-1) = \underline{W}(t,1) = 0, & \text{for } t > 0, \\
				\underline{W}(0,x) = 0, & \text{for } x \in [-1,1].
			\end{cases}
		\end{equation}
		By the maximum principle, since $\Tilde{W}\geq \underline{W}=0$ on $\partial\big(\R^+\times [-1,1]\big)$,we have $\Tilde{W}\geq \underline{W}$ on $\R^+\times [-1,1]$.
		\\

		Since $\underline{W}$ satisfies the heat equation on $[-1,1]$ with Dirichlet boundary conditions, we can find a time $t_C'\geq 0$, $\alpha>0$ and $\delta>0$ such that for all $|x|\leq \frac{1}{2}$ and $t\geq t_C'$, we get
		$$
		\underline{W}(t,x)\geq \delta e^{-\alpha t}.
		$$
		Hence, for $t'_C=\max(4,2t_C)$ and $t\geq t_C'$,  $u(t)$ satisfies 
		$$\fa |x|\leq \frac{1}{2}, \quad u(t,x)\geq \frac{1}{2\sqrt{2}c_\varphi}\,e^{-(\alpha+M)t}\times e^{\frac{1}{2}\mathcal{H}(x)} .$$
		Now, writing $\mu(x) =\bigg(\int_{-1}^1e^{\frac{1}{2}\mathcal{H}(y)}dy\bigg)^{-1}e^{\frac{1}{2}\mathcal{H}(x)}\mathbf{1}_{[-1,1]}(x)$ and $\eta(t) = \frac{1}{2\sqrt{2}c_\varphi}\,e^{-(\alpha+M)t}>0$ we get
		$$
		\forall t\geq t_C', \qquad u(t)\geq \eta(t)\times \mu.
		$$
	\end{proof}

\subsection{Uniform Local Mass Bound}
We now prove that $\|u(t)\|_E$ is uniformly controlled by the initial condition $\|u_0\|_E$.

\begin{lemma}\label{lem:estimation u_E}
    There is a constants $K_1\geq 0$ such that for any initial probability density $u_0\in E$ , we have for every $t\geq 0$
    \begin{enumerate}
    \item $\|(-x)_+u(t)\|_{L^1}\leq \|(-x)_+u_0\|_{L^1}e^{-t}+K_1.$
    \item $\|u(t)\|_{E}\leq (K_1 +1)\|u_0\|_{E}.$
    \end{enumerate}
\end{lemma}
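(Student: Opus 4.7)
The strategy is to estimate a weighted moment of $u$ using a $\Cc^2$ weight $\phi$ that is comparable to $(-x)_+$, vanishes at $x=0$ (so the singular source $N_u(t)\delta_0$ drops out), and interacts cleanly with the Ornstein--Uhlenbeck drift $h(x)=-x+h_0$ near $-\infty$.

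Specifically, I would fix $\phi\in\Cc^2(\R)$, nonnegative and nonincreasing, with $\phi(x)=-x$ for $x\le x_0$, $\phi(x)=0$ for $x\ge 0$, and a smooth interpolation on $[x_0,0]$. Then $\phi(0)=0$ and $|\phi(x)-(-x)_+|\le |x_0|\,\mathbf{1}_{[x_0,0]}(x)$, so that $\phi$ and $(-x)_+$ differ by a bounded $L^1$ correction. Using \eqref{eq:comportement h -infty}, a direct computation gives $h(x)\phi'(x)+\phi''(x)=-\phi(x)-h_0$ for $x\le x_0$, while $h\phi'+\phi''$ is bounded on the compact interval $[x_0,0]$. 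Hence there is a constant $C>0$ with
\[
h(x)\phi'(x)+\phi''(x) \le -\phi(x)+C \qquad \text{for all } x\in\R.
\]

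Next, I would test the weak formulation of \eqref{eq:FPSL lin} against $\phi$, first for nice data $u_0\in\Cc^0_c(\R)$ with $u_0\ge 0$. By Proposition~\ref{prop:MaxPrinc}, $0\le u(t,\cdot)\le C_0 u_\infty$, and since $u_\infty$ has Gaussian decay at $-\infty$ (see \eqref{eq:  u stat +infini}), the integral $\int\phi\,u(t)\,dx$ is finite. I would use a truncation $\phi_R\in\Cc^2_c(\R)$ agreeing with $\phi$ on $[-R,0]$ (so $\phi_R(0)=0$), together with the mollification from Section~\ref{sec:unique} to justify the identity rigorously, and then let $R\to\infty$ by dominated convergence---the Gaussian decay of $u_\infty$ killing the potentially unbounded $h\phi_R'+\phi_R''$ in the cutoff region. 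The boundary term $N_u(t)\phi_R(0)=0$ drops out and, using $u\ge 0$, I obtain
\[
\frac{d}{dt}\int_\R \phi\,u(t,x)\,dx \le -\int_\R \phi\,u(t,x)\,dx + C\,\|u(t)\|_1 \le -\int_\R \phi\,u(t,x)\,dx + C.
\]
Gronwall then yields $\int\phi\,u(t)\,dx\le e^{-t}\int\phi\,u_0\,dx + C$, and replacing $\phi$ by $(-x)_+$ via the bounded correction gives part~1 for compactly supported data, with $K_1$ absorbing $C$ and $|x_0|\|u(t)\|_1$.

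For a general nonnegative $u_0\in E$, I would approximate by $u_0^{(n)}\in\Cc^0_c(\R)$ with $u_0^{(n)}\ge 0$ and $\|u_0^{(n)}-u_0\|_E\to 0$, apply the previous estimate to each $u_0^{(n)}$ with the same $K_1$, and pass to the limit using the $L^1$-contraction of the semigroup \eqref{eq:continuité S} and Fatou's lemma. Part~2 then follows immediately: since $u_0$ is a probability density and the semigroup preserves mass and positivity, $\|u(t)\|_1=1$, hence
\[
\|u(t)\|_E = 1 + \|(-x)_+u(t)\|_1 \le 1 + e^{-t}\|(-x)_+ u_0\|_1 + K_1 \le \|u_0\|_E + K_1 \le (K_1+1)\|u_0\|_E,
\]
using $\|u_0\|_E\ge\|u_0\|_1=1$. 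The delicate point is the passage from the truncated weight $\phi_R$ to the unbounded $\phi$: since $h(x)$ is unbounded at $-\infty$, this rests critically on the pointwise bound $u\le C_0 u_\infty$ and the Gaussian decay of $u_\infty$, which is why the estimate must first be established for compactly supported data and then extended by density.
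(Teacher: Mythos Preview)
Your proof is correct and follows essentially the same approach as the paper: choose a smooth weight comparable to $(-x)_+$ that vanishes at $x=0$, derive the differential inequality $I'(t)\le -I(t)+C$ from the drift identity $h\phi'+\phi''\le -\phi+C$, apply Gronwall, and deduce part~2 from part~1 via mass conservation. The only differences are cosmetic---the paper smooths its weight on $[0,1]$ rather than on $[x_0,0]$---and that you are more careful than the paper in justifying the differentiation step through truncation, the bound $u\le C_0u_\infty$, and a density argument, whereas the paper computes $I'(t)$ formally.
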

\begin{proof}
    Let $\chi \in \mathcal{C}^\infty(\R)$ such that $\chi\big|_{(-\infty,0]} = -x$ and $\chi\big|_{[1,\infty)} = 0$. Then, there exists $a_1>0$, $c_1>0$ such that $\chi$ satisfies for all $ x\in \R$
    \[
     (-x)_+ -a_1\leq \chi(x)\leq (-x)_+, \qquad h(x)\chi'(x)\leq -\chi(x)+c_1,
    \]
    thanks to \eqref{eq:comportement h -infty}. Let
\[
I(t) = \int_\R\chi(x)u(t,x)\,dx, \qquad J(t) = \int_\R(-x)_+u(t,x)\,dx.
\]Then, for all $t \geq 0$
    \[
    J(t)-a_1\leq I(t)\leq J(t).
    \]
    Next, differentiating the integral, using that $\chi(0)=0$, we get, for all $t\geq 0$
    \begin{align*}
        I'(t)
        &= \int_\R \chi(x)\big(-\partial_x\big(h(x)u(t,x)\big)+\partial_x^2u(t,x) + N_u(t) \delta_0 \big)\,dx 
        \\
        &\leq\int_\R\big(h(x)\chi'(x)+\chi''(x)\big)u(t,x)\,dx
        \\
        &\leq \int_\R \big( -\chi(x) + c_1 + \chi''(x) \big) u(t,x) \, dx \leq -I(t) + c_1 + \|\chi''\|_{\infty},
    \end{align*}
where we used mass conservation, $\int_\R u(t,x)\,dx = 1$. Gronwall's Lemma gives
    \begin{align*}
        \fa t\geq 0, \qquad I(t)&\leq I(0)e^{-t}+c_1+\|\chi''\|_{\infty},
    \end{align*}
    and thus, since $J(t) \leq I(t) + a_1$ and $I(0) \leq J(0)$, for all $ t \geq 0$ we have
    \begin{align*}
        J(t)\leq I(0)e^{-t}+c_1+\|\chi''\|_{\infty}+a_1 
        \leq J(0)e^{-t}+c_1+\|\chi''\|_{\infty}+a_1.
    \end{align*}
    By setting $K_1 = c_1+\|\chi''\|_{L^\infty}+a_1$, this establishes the first point of the lemma.

    For the second point, we write, using mass conservation, 
    \begin{align*}
        \fa t \geq 0, \quad \|u(t)\|_{E} &= \|(-x)_+u(t)\|_{1}+\|u(t)\|_{1} =J(t)+1
        \\
        &\leq J(0)e^{-t}+K_1+1 \leq \|(-x)_+u_0\|_{1}+K_1+1\\
        &\leq \|u_0\|_{E} + K_1\leq (K_1+1)\|u_0\|_{E}.
    \end{align*}
    This is  the second point of the lemma.
\end{proof}
\subsection{Global contraction of $S(T)$ and exponential convergence}
    
    We complete the proof of Theorem~\ref{th:conv DH}. The computations below are standard in the Doeblin--Harris method and are strongly inspired by the original article \cite{zbMATH07654553}, where the method was first introduced. We adapt it to our setting by using the estimates provided in Lemma~\ref{lem:estimation u_E}.
We first announce a direct consequence of Proposition \ref{prop:minoration S(t)u}.
\begin{corollary}\label{cor:contraction 1}
Let $C>0$, and let $f\in E$ be such that 
\[
\int_{\mathbb{R}} f(x)\,dx = 0
\qquad\text{and}\qquad 
\|(-x)_+ f\|_{1} \le \frac{C}{2}\,\|f\|_{1}.
\]
Then, with the notation of Proposition~\ref{prop:minoration S(t)u}, we have
\[
\forall\, t \ge t'_C, \qquad 
\|S(t) f\|_{1} \le (1 - \eta(t))\,\|f\|_{1}.
\]

\end{corollary}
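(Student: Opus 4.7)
The plan is to use the Hahn (positive/negative part) decomposition of $f$, normalize the two nonnegative pieces to probability densities whose $E$-norm is controlled by the hypothesis, apply Proposition~\ref{prop:minoration S(t)u} to each, and then exploit the common minorant $\eta(t)\mu$ to get cancellation in the difference.

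More precisely, write $f = f_+ - f_-$ where $f_\pm = \max(\pm f, 0)$. Since $\int_\R f\,dx = 0$, we have $\|f_+\|_1 = \|f_-\|_1 = \tfrac{1}{2}\|f\|_1$. Assuming $f \neq 0$ (otherwise the statement is trivial), define the probability densities
\[
u_0^\pm = \frac{2\,f_\pm}{\|f\|_1}.
\]
The hypothesis $\|(-x)_+ f\|_1 \leq \tfrac{C}{2}\|f\|_1$ combined with $|f_\pm| \leq |f|$ gives
\[
\|(-x)_+ u_0^\pm\|_1 = \frac{2\,\|(-x)_+ f_\pm\|_1}{\|f\|_1} \leq \frac{2\,\|(-x)_+ f\|_1}{\|f\|_1} \leq C,
\]
hence $\|u_0^\pm\|_E \leq 1+C$. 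Thus both initial data fall within the class to which Proposition~\ref{prop:minoration S(t)u} (applied with constant $1+C$ in place of $C$, absorbing the resulting time into the notation $t_C'$) can be applied.

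By that proposition, for every $t \geq t_C'$ we have $S(t)u_0^\pm \geq \eta(t)\mu$ pointwise, so the residuals
\[
r_\pm(t) := S(t)u_0^\pm - \eta(t)\mu
\]
are nonnegative. Using mass conservation of $S(t)$ and the fact that $\mu$ and $u_0^\pm$ are probability densities,
\[
\|r_\pm(t)\|_1 = \int_\R r_\pm(t,x)\,dx = 1 - \eta(t).
\]
Linearity of $S(t)$ then gives
\[
S(t)f = \tfrac{\|f\|_1}{2}\bigl(S(t)u_0^+ - S(t)u_0^-\bigr) = \tfrac{\|f\|_1}{2}\bigl(r_+(t) - r_-(t)\bigr),
\]
so by the triangle inequality
\[
\|S(t)f\|_1 \leq \tfrac{\|f\|_1}{2}\bigl(\|r_+(t)\|_1 + \|r_-(t)\|_1\bigr) = \|f\|_1\,(1-\eta(t)),
\]
which is the announced contraction. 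No real obstacle arises: the only point requiring care is aligning the constant in the bound $\|u_0^\pm\|_E \leq 1+C$ with the input of Proposition~\ref{prop:minoration S(t)u}, which is harmless since that proposition holds for every $C>0$.
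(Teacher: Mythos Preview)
Your proof is correct and follows essentially the same route as the paper: Hahn decomposition of $f$ into normalized probability densities, application of Proposition~\ref{prop:minoration S(t)u} to each piece, and cancellation of the common minorant $\eta(t)\mu$ in the $L^1$ estimate. The only cosmetic difference is that you track the bound $\|u_0^\pm\|_E \leq 1+C$ explicitly and feed $1+C$ into the proposition, whereas the paper records only $\|(-x)_+\nu_\pm\|_1 \leq C$ before invoking it; your version is arguably the more precise reading of the hypothesis of Proposition~\ref{prop:minoration S(t)u}.
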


\begin{proof}
Since $\int_{\mathbb{R}} f(x)dx = 0$, we have the decomposition
\[
f = \frac{\|f\|_1}{2}\,(\nu_+ - \nu_-) \qquad \text{with} \quad \nu_+ = \max\!\left(0,\,\frac{2f}{\|f\|_1}\right), \quad \nu_- = \max\!\left(0,\,-\frac{2f}{\|f\|_1}\right).
\]

The assumption $\|(-x)_+ f\|_{1} \le \frac{C}{2}\|f\|_1$ implies that 
\(
\|(-x)_+ \nu_\pm\|_1 \le C,
\)
and by Proposition~\ref{prop:minoration S(t)u}, there exists $t'_C \ge 0$ such that, 
\[
\forall t \ge t'_C, \qquad  S(t)\nu_\pm \ge \eta(t)\,\mu.
\]

For any $t \ge t'_C$, we compute
\begin{align*}
\|S(t)f\|_{1}
&= \left\| S(t)\left( \frac{\|f\|_1}{2}(\nu_+ - \nu_-)\right)\right\|_{1} = \frac{\|f\|_1}{2}\,\| S(t)\nu_+ - \eta(t)\mu - (S(t)\nu_- - \eta(t)\mu )\|_1 \\
&\le \frac{\|f\|_1}{2}
  \big( \|S(t)\nu_+ - \eta(t)\mu\|_1 + \|S(t)\nu_- - \eta(t)\mu\|_1 \big) \\
&\le \frac{\|f\|_1}{2}
  \left( \int_{\mathbb{R}} (S(t)\nu_+ - \eta(t)\mu)\,dx
        + \int_{\mathbb{R}} (S(t)\nu_- - \eta(t)\mu)\,dx \right)
\quad (\text{since } S(t)\nu_\pm - \eta(t)\mu \ge 0)
\\
&\le (1 - \eta(t))\,\|f\|_1.
\end{align*}
This proves the result.
\end{proof}

\begin{proof}\noindent {\em Proof of Theorem \ref{th:conv DH}}\\
    Let $\beta >0$ and for $f\in E$, we define $N_\beta(f) = \|f\|_1+\beta\|(-x)f\|_1 $. Then, $N_\beta$ is equivalent to the norm $\|\cdot\|_E$ and satisfies
    \[
    \fa f\in E, \quad \min(\beta,\beta^{-1})\|f\|_E\leq N_\beta(f)\leq \max(\beta,\beta^{-1})\|f\|_E.
    \]
    We aim to show that for a suitable $\beta>0$, we can find a time $T>0$ such that $f\mapsto S(T)f$ is a contraction for the norm $N_\beta$.\\
    
   Let $C>0$ and let $f\in E$ satisfying $\int_\R f(x)\,dx = 0$. \\
   
 \noindent{\em First case: $\|(-x)_+f\|_1\leq \frac{C}{2}\|f\|_1$.}\\
    Then, according to Corollary \ref{cor:contraction 1},
    \[
    \fa t \geq t_C', \quad \|S(t)f\|_1\leq (1-\eta(t))\|f\|_1.
    \]
    So, for $t\geq t_C'$, using Corollary \ref{cor:contraction 1} and the first point of Lemma \ref{lem:estimation u_E} applied to $u(t) = \frac{S(t)f}{\|f\|_1}$, we get
    \[
    \big\|(-x)_+\frac{S(t)f}{\|f\|_1}\big\|_1\leq \big\|(-x)_+\frac{f}{\|f\|_1}\big\|_1e^{-t}+K_1.
    \]
%    and so
%    \[ \big\|(-x)_+S(t)f\big\|_1\leq \big\|(-x)_+f\big\|_1e^{-t}+K_1\|f\|_1.\]
    Using this estimate, we can bound the norm $N_\beta$ of $S(t)f$ as
    \begin{align*}
        N_\beta(S(t)f)&=\|S(t)f\|_1+\beta\|(-x)_+S(t)f\|_1\\
        &\leq (1-\eta(t))\|f\|_1+\beta\big(\|(-x)_+f\|_1e^{-t}+K_1\|f\|_1\big)\\
        &\leq \big(1+\beta K_1 -\eta(t)\big)\bigg[\|f\|_1+\frac{\beta e^{-t}}{1+\beta K_1 - \eta(t)}\|(-x)_+f\|_1\bigg]\\
        &\leq \big(1+\beta K_1 -\eta(t)\big)\bigg[\|f\|_1+\frac{\beta e^{-t}}{1 - \eta(t)}\|(-x)_+f\|_1\bigg] .
    \end{align*}
    Recall that $\eta(t) = \frac{1}{2\sqrt{2}c_\varphi}\,e^{-(\frac{\pi^2}{4}+M)t}$, then $\lim_{t\rightarrow \infty}\frac{e^{-t}}{1-\eta(t)} = 0$ so we can choose $T_C\geq t_C'>0$ such that $\frac{e^{-T_C}}{1-\eta(T_C)}\leq 1$. Now, we take $\beta\in (0, \frac{\eta(T_C)}{2K_1})$ so we get
    \[
    N_\beta(S(T_C)f)\leq (1-\frac{1}{2}\eta(T_C))\bigg[\|f\|_1+\beta\|(-x)_+f\|_1\bigg] = \gamma^{(1)}_CN_\beta(f).
    \]
 \noindent{\em Second case: $\|(-x)_+f\|_1\geq \frac{C}{2}\|f\|_1$.}\\
    Let $t\geq 0$ and $\varepsilon>0$. Using the first point of Lemma \ref{lem:estimation u_E} with $u(t) = \frac{S(t)f}{\|f\|_1}$ to get
    \begin{align*}
        N_\beta(S(t)f) &= \|S(t)f\|_1+\beta\|(-x)S(t)f\|_1\\
        &\leq \|f\|_1+\beta\big(\|(-x)_+f\|_1e^{-t}+K_1\|f\|_1\big)\\
        &\leq (1-\varepsilon+\beta K_1)\|f\|_1+\varepsilon\|f\|_1+\beta e^{-t}\|(-x)_+f\|_1\\
        &\leq (1-\varepsilon+\beta K_1)\|f\|_1+\big(\frac{2\varepsilon}{C}+\beta e^{-t}\big)\|(-x)_+f\|_1\\
        &\leq \big(1-\varepsilon+\beta K_1\big)\bigg[\|f\|_1+\beta \big(\frac{2\varepsilon}{\beta C}+e^{-t}\big)\|(-x)_+f\|_1\bigg].
    \end{align*}
    To get a contraction again, we choose $\beta$ such that $\beta K_1<\frac{1}{4}$ and $\varepsilon = 2\beta K_1$. We write $\gamma^{(2)}_{K_1} = 1-\beta K_1$ and we obtain
    \[
    N_\beta(S(t)f)\leq \gamma^{(2)}_{K_1}\bigg[\|f\|_1+\beta \big(\frac{4K_1}{ C}+e^{-t}\big)\|(-x)_+f\|_1\bigg].
    \]
    Finally, the choice of time $T'=\log(2)$ and $C=8K_1$ ensures that $\frac{4K'}{C}+e^{-t}\leq 1$ and that 
    \[
    N_\beta(S(T')f)\leq \gamma^{(2)}_{K_1}N_\beta(f).
    \]
    We then take $\gamma = \max\{\gamma^{(1)}_{K_1},\gamma^{(2)}_{K_1}\}\in (0,1)$ and $T = \max\{T_C, T'\}$ and obtain
    \[
    \fa f\in E, \quad N_\beta(S(T)f)\leq \gamma N_\beta(f).
    \]
    By an immediate induction, we get
    \[
    \fa f\in E, \fa k \in \N, \quad N_\beta(S(kT)f)\leq \gamma^kN_\beta(f).
    \]
    For any $t\geq 0$, we can write $t= \big[\frac{t}{T}\big]T +r$ with $r\in [0,T)$ and we get, with $\lambda = -\frac{\log\gamma}{T}$, 
    \begin{align*}
        N_\beta(S(t)f)&=N_\beta(S(\big[\frac{t}{T}\big]T +r)f = N_\beta(S(\big[\frac{t}{T}\big]T)S(r)f)\\
        &\leq \gamma^{[\frac{t}{T}]}N_\beta(S(r)f)\\
        &\leq \gamma^{-1}e^{-\lambda t}N_\beta(S(r)f)\\
        &\leq \gamma^{-1}\max(\beta,\beta^{-1})e^{-\lambda t}\|S(r)f\|_E.
    \end{align*}
    Using the second point of Lemma \ref{lem:estimation u_E}, we get that
    \[
    \fa r \in [0,T), \quad \|S(r)f\|_E\leq K\|f\|_E.
    \]
    And then, we finally obtain
    \begin{align*}
        \|S(t)f\|_E &\leq \big(\min(\beta,\beta^{-1})\big)^{-1}N_\beta(S(t)f) \\
        & \leq K\gamma^{-1}\max(\beta^2,\beta^{-2})e^{-\lambda t}\|f\|_E\\
        & = Me^{-\lambda t}\|f\|_E.
    \end{align*}
    Let $f = u_0 - u_\infty$ where $u_0$ is a probability density and $u_\infty$ is the unique stationary state, then, for all $ t\geq 0$, $S(t)f = S(t)u_0 - u_\infty$ and 
    \[
    \|u(t)-u_\infty\|_{E}\leq Me^{-\lambda t}\|u_0-u_\infty\|_{E}.
    \]
    This completes the proof of Theorem \ref{th:conv DH}.
\end{proof}

\section{Conclusion}
\label{sec:conclusion}

Our study of a conservative linear Fokker–Planck equation with a superlinear drift at infinity is motivated by the Integrate-and-Fire (I\&F) model in neuroscience \cite{BrHa, BL_2003, BretteG2005}. The novelty comes from the mass flux at $+\infty$ which generates the activity serving as a source term in the equation.  We first establish the  well-posedness of weak solutions in $L^1$, once the boundary condition at infinity has been defined in a suitable weak sense. 
Existence is proved passing to the limit in an approximate problem with a truncated drift. Uniqueness relies on a regularization argument in the spirit of the methods developed in \cite{DiPL89, Lebris_Lions_2008, Figalli2008}. The diffusion term allows us to prove regularizing effects is space and time, and consequently to establish the relative entropy property. The technical difficulty comes from the singularity of the pointwise re-injection of the outgoing flux at $x = +\infty$. The Doeblin–Harris method allows us to prove exponential convergence toward the stationary state in the $L^1$ norm with a linear weight at $-\infty$.
\\

Several questions are not treated in this paper, for instance: several other regularizing effects might be considered, more general initial conditions are possible for the relative entropy property, coupled systems are used in the biophysical literature as well as refractory states.
\\

In a forthcoming paper \cite{PRS3}, we study the same problem problem when the network activity is not only used for a pointwise source but also to define a nonlinearity on the drift as in the usual I\&F model
\begin{equation}
		\left\{\begin{aligned}
			& \frac{\partial p}{\partial t}(t,v) +\frac{\partial}{\partial v}((bN(t)-v)p(t,v)) - \frac{\partial ^2p}{\partial v}(t,v) = \delta_{V_R}(v)N(t), \qquad t\geq 0, v\leq V_F,\\
			& p(t,V_F) = 0, \\
			& N(t) = -\frac{\partial p}{\partial v}(t,V_F). 
		\end{aligned}\right.
	\end{equation}
	It appears that the classical tools used for the nonlinear I\&F equation, namely the reduction to a Stefan-type problem followed by a fixed-point argument, as in \cite{CGGS}, cannot be directly applied in this setting. The question of finite time blow-up (\cite{CCP}) is also deeply changed.
\\

\noindent {\bf Acknowledgment.} CR and DS are supported by the Fondation Simone et Cino Del Duca.

\bibliographystyle{siam}  
\bibliography{BibDPSZ}

\end{document}